\documentclass[11pt]{amsart}
\usepackage{}
\usepackage{mathrsfs}
\usepackage{amsfonts}

\usepackage{amssymb,amsthm,amsmath,hyperref,txfonts}

\usepackage{graphicx,float}

\numberwithin{equation}{section}

\newtheorem{thm}{Theorem}[section]
\newtheorem{coro}{Corollary}[section]
\newtheorem{lem}{Lemma}[section]
\newtheorem{rem}{Remark}[section]
\newtheorem{prop}{Proposition}[section]
\newtheorem{defn}{Definition}[section]

\newcommand{\beq}{\begin{eqnarray}}
\newcommand{\eeq}{\end{eqnarray}}
\newcommand{\beqno}{\begin{eqnarray*}}
\newcommand{\eeqno}{\end{eqnarray*}}
\newcommand{\be}{\begin{equation}}
\newcommand{\ee}{\end{equation}}
\newcommand{\beno}{\begin{equation*}}
\newcommand{\eeno}{\end{equation*}}

\newcommand\tl{\tilde}

\newcommand\Dl{\Delta}

\newcommand\M{\mathbb{M}}
\newcommand\N{\mathbb{N}}
\newcommand\nb{\nabla}
\newcommand\nn{\nonumber}
\newcommand{\R}{\mathbb{R}}
\newcommand\Z{\mathbb{Z}}

\newcommand\eps{\epsilon}

\newcommand\fr{\frac}
\newcommand\al{\alpha}
\newcommand{\dv}{\mathrm{div}}

\newcommand\lm{\lambda}
\newcommand\Lm{\Lambda}

\newcommand\Om{\Omega}
\newcommand\om{\omega}
\newcommand\Pe{\mathbb{P}}
\newcommand\pr{\partial}
\newcommand{\Rey}{\mathrm{Re}}
\newcommand{\We}{\mathrm{We}}
\newcommand{\ddl}{\dot{\Dl}_q}

\allowdisplaybreaks

\topmargin       -0.40in \oddsidemargin    0.08in \evensidemargin
0.08in \marginparwidth   0.00in \marginparsep     0.00in \textwidth
15.5cm \textheight 23.5cm

\begin{document}
\title[Compressible Oldroyd-B model with non-small
coupling parameter]{Global solution in critical spaces to the compressible Oldroyd-B model with non-small coupling parameter}

\author{Ruizhao Zi}

\address{School of Mathematics and Statistics, Central
China Normal University, Wuhan 430079, P. R. China}
\email{ruizhao3805@163.com}

\subjclass[2010]{76A10, 76N10}

\keywords{compressible Oldroyd-B model, critical Besov space, global existence}

\begin{abstract}
This paper is dedicated to the global well-posedness issue of the compressible Oldroyd-B model in the whole space $\R^d$ with $d\ge2$.  It is shown that this set of equations admits a unique global solution in a certain critical Besov space provided the initial data, but not necessarily the coupling parameter, is small enough. This result extends the work by Fang and the author [{J. Differential Equations}, {256}(2014), 2559--2602] to the non-small coupling parameter case.
\end{abstract}
\maketitle

\section{Introduction}
\subsection{The modeling}
Unsteady flows of viscoelastic fluids is described by the
conservation of mass, and of momentum, and the constitutive equation
of the fluid. Particularly, the Oldroyd-B fluids in
$(0,T)\times\mathbb{R}^d, T>0, d\ge2$ is governed by
 \beq\label{unsteady viscoelastic}
\begin{cases}
\pr_t\rho+\dv(\rho u)=0,\\
\rho(\pr_tu+(u\cdot\nabla) u)=\dv (\tau-p\, \mathrm{Id}),\\
\tau+\lambda
\frac{\mathcal{D}_{\alpha}\tau}{\mathcal{D}t}=2\eta(D(u)+\mu\frac{\mathcal{D}_{\al}D(u)}{\mathcal{D}t}),\\
( u,\rho, \tau)|_{t=0}=(\rho_0, u_0, \tau_0).
\end{cases}
 \eeq
The unknown $(\rho, u, \tau)$ are the density,  velocity and
symmetric tensor of constrains, respectively. $\mathrm{Id}$ is the identity
tensor, and the smooth function $p=p(\rho)$ is the pressure.
Moreover,  $\eta$ is the total viscosity of the fluid, $\lm>0$ is
the relaxation time, and $\mu$ is the retardation time with
$0<\mu<\lm$. For a tensor $\M$, we denote by
$\frac{\mathcal{D}_{\al}\M}{\mathcal{D}t}$ an objective derivative
of the tensor $\M$, defined by
 \beno
\frac{\mathcal{D}_\al\M}{\mathcal{D}t}=(\pr_t+(u\cdot\nb))\M+\M
W(u)-W(u)\M-\al(D(u)\M+\M D(u)),
 \eeno
where $D(u)=1/2(\nabla u+(\nabla u)^\top)$, $W(u)=1/2(\nabla
u-(\nabla u)^\top)$ are the deformation tensor and the vorticity
tensor respectively, and $\al$ is a parameter in [-1,1].

The symmetric tensor of constrains $\tau$ could be decomposed into
the Newtonian part and the elastic part $\tau_e$, i.e.,
 \be\label{decom_tau}
\tau=2\eta_sD(u)+\tau_e,
 \ee
where $\eta_s=\eta\mu/\lm$  is the solvent viscosity. Substituting
\eqref{decom_tau} into $\eqref{unsteady viscoelastic}_3$, we find
that $\tau_e$ satisfies
 \be\label{tau_e}
\tau_e+\lambda\frac{\mathcal{D}_\al\tau_e}{\mathcal{D}t}=2\eta_e
D(u),
 \ee
where $\eta_e=\eta-\eta_s$ is the polymer  viscosity.

For the sake of simplicity, we denote $\tau_e$ by $\tau$ from now on. Then it follows from
\eqref{unsteady viscoelastic}--\eqref{tau_e} that $(\rho, u,  \tau)$ solves
\begin{eqnarray}\label{COB4}
\begin{cases}
\pr_t\rho+\dv(\rho u)=0,\\
\rho(\pr_tu+(u\cdot\nabla) u)-\eta_s(\Delta u+\nabla\dv u)+\nabla
p=\dv\tau,\\
\lambda(\pr_t\tau+(u\cdot\nabla)\tau+g_\al(\tau,
\nabla u))+\tau=2\eta_e D(u), \\
(u,\rho, \tau)|_{t=0}=(u_0,\rho_0,\tau_0),
\end{cases}
\end{eqnarray}
where $g_\al(\tau, \nabla u):=\tau
W(u)-W(u)\tau-\al\left(D(u)\tau+\tau D(u)\right)$ with $D(u))$,
$W(u)$ and $\al$ defined as before. For more explanations on the
modeling, see \cite{Oldroyd58,Talhouk94} and references therein.

Introducing some dimensionless variables, c.f. \cite{FZ14}, then  system \eqref{COB4} can be reduced to
\begin{eqnarray}\label{COB1dimensionless}
\begin{cases}
\displaystyle \pr_tb+\frac{\mathrm{Re}\ \dv u}{\eps}+\dv (b u)=0,\\
\displaystyle\mathrm{Re}\left(\pr_tu+(u\cdot\nb) u\right)+\frac{\nb b}{\eps}-(1-\om)(\Dl +\nb\dv) u\\
\displaystyle\quad\quad\quad\quad\quad\quad\quad\quad=-\frac{(1-\om)\eps b}{\mathrm{Re}+\eps b}(\Dl +\nb\dv)u+K^\eps(\eps b)\frac{\nb b}{\eps}+\frac{\Rey\dv\tau}{\mathrm{Re}+\eps b},\\
\displaystyle\We(\pr_t\tau+(u\cdot\nabla)\tau+g_\al(\tau,
\nabla u))+\tau=2\om D(u),
\end{cases}
\end{eqnarray}
where the parameters $\eps, \Rey, \We, \om$ are Mach number,  Reynolds number, Weissenberg number and coupling constant, respectively, with $\om\in(0,1)$. Moreover,
 \be\label{K^eps}
K^\eps(c)=\frac{c}{\mathrm{Re}+c}-\eps^2\frac{\mathrm{Re}\left(\frac{dp}{d\rho}(\mathrm{Re}+c)-\frac{dp}{d\rho}(\mathrm{Re})\right)}
{\mathrm{Re}+c}.
 \ee
We would like to point out that, letting $\eps\rightarrow0$ in \eqref{COB1dimensionless}, then we obtain (see \cite{FZ14}) the following incompressible  Oldroyd-B model in dimensionless variables:
\begin{eqnarray}\label{IOBdimensionless}
\begin{cases}
\mathrm{Re}\left(u_t+(u\cdot\nabla) u\right)-(1-\om)\Delta u+\nabla
\Pi=\dv\tau,\\
\mathrm{We}(\tau_t+(u\cdot\nabla)\tau+g_\al(\tau,
\nabla u))+\tau=2\om D(u), \\
\dv u=0,
\end{cases}
\end{eqnarray}
where $\Pi$ is the pressure which is the Lagrange multiplier for the divergence free condition.

In this paper, we focus on the case $\eps=1$. Setting $a:=\frac{b}{\Rey}$, then $(a, u,\tau)$ takes the form
\begin{eqnarray}\label{COB2dimensionless}
\begin{cases}
\displaystyle \pr_ta+ \dv u+\dv (a u)=0,\\
\displaystyle \pr_tu+(u\cdot\nb) u-\frac{1}{\Rey}\mathcal{A} u+\nb a-\frac{1}{\Rey}\dv\tau \\
\displaystyle\quad\quad\quad\quad\quad\quad\quad\quad=-\frac{1}{\Rey}I(a)\left(\mathcal{A}u+\dv\tau \right)+\tl{K}(a)\nb a,\\
\displaystyle\pr_t\tau+(u\cdot\nabla)\tau+g_\al(\tau,
\nabla u)+\fr{1}{\We}\tau=\fr{2\om}{\We} D(u),
\end{cases}
\end{eqnarray}
where $\mathcal{A}:=(1-\om)(\Dl+\nb\dv)$, $I(a):=\frac{a}{1+a}$ and
\beno
\tl{K}(a):=\frac{a}{1+a}-\frac{\frac{dp}{d\rho}(\mathrm{Re}(1+a))-\frac{dp}{d\rho}(\mathrm{Re})}
{1+a}.
\eeno

The theory of Oldroyd-B fluids recently gained quite some attention.
Most of the results on Oldroyd-B fluids in the literature are about the incompressible model.
The study  of the incompressible Oldroyd-B model started by a pioneering
paper given by Guillop\'e and Saut \cite{GS90}. They proved that (i) system \eqref{IOBdimensionless} admits a unique local strong solution in
suitable Sobolev spaces $H^s(\Omega)$ for  {\em bounded domains} $\Omega \subset \R^3$; and (ii) this solution
is global provided the data as well as the coupling
constant $\om$ between the velocity $u$ and the symmetric tensor of the constrains $\tau$  are sufficiently small.
For extensions to this results to the $L^p$-setting, see the work of
Fernand\'ez-Cara, Guill\'en and Ortega \cite{FGG98}. Later on, Molinet and Talhouk \cite{MT04} removed
the smallness restriction on the coupling constant $\om$ in
\cite{GS90}. The situation of {\em exterior domains} was considered
first in \cite{HNS12}, where the existence of a unique global strong
solution defined in certain function spaces was proved provided the
initial data and the coupling parameter $\om$ are small enough. Recently, Fang, Hieber and the author \cite{FHZ13} improved
the main result given in \cite{HNS12} to the situation of non-small coupling constant.

 For the {\em scaling invariant} approach, and $\Om=\R^d, d\ge2$, Chemin and Masmodi in \cite{CM01} proved the existence and uniqueness of the
 global solution to the Oldroyd-B model \eqref{IOBdimensionless} with initial data $(u_0,\tau_0)$ belonging to the critical space
 $\left(\dot{B}^{\fr{d}{p}-1}_{p,1}\right)^d\times \left(\dot{B}^{\fr{d}{p}}_{p,1}\right)^{d\times d}$ for any $p\in[1,\infty)$.
A smallness restriction on the coupling constant $\om$ is needed in this result.  Afterwards, for general $\om\in(0,1)$,
 Chen and Miao \cite{Chen-Miao08} constructed global solutions to  the incompressible Oldroyd-B model with small initial data in
$B^s_{2,\infty}, s>\frac{d}{2}$. For the critical $L^p$  framework, Fang, Zhang and the author \cite{Zi-Fang-Zhang14} removed the smallness restriction on $\om$ in \cite{CM01} very recently.

For the {\em weak solutions} of incompressible Oldroyd-B fluids,  see the work of   Lions and Masmoudi \cite{LM00} for the case $\al=0$. The general case $\al\neq0$ is still open up to now. As for  the {\em blow-up criterions} of  the incompressible Oldroyd-B model,
there are works  \cite{CM01,KMT08,LMZ10}.  Besides, we would like to mention that
 Constantin and Kliegl \cite{CK12} proved the global regularity of solutions in two dimensional case for the Oldroyd-B fluids with {\em diffusive stress}.    An approach based on the {\em deformation tensor} can be found in \cite{Lei10,Lei07,LLZ08,LZ05,LLZ05,LZ08,Qian10,Zhang-Fang12}.

On the other hand, the studies on compressible Oldroyd-B model have thrown up some interesting results.
 Lei \cite{Lei06} and Gullop\'e,
Salloum and Talhouk \cite{GST10} investigated the
incompressible limit problem of the compressible Oldroyd-B model  in
a {\em torus} and bounded domain $\Omega \subset \R^3$, respectively. They showed that the  compressible flows with {\em
well-prepared} initial data converge to incompressible ones when the
Mach number $\eps$ converges to zero. The case of  {\em ill prepared} initial data was studied by Fang and the author \cite{FZ14} in the whole space $\R^d, d\ge2$. In particular, if $\eps=1$, we also obtained in \cite{FZ14} the  existence and uniqueness of the global  solution in critical spaces to system \eqref{COB1dimensionless} with small coupling constant $\om$.
The unique  local   strong solution to \eqref{COB4}  with initial density $\rho_0$ {\em vanishing
from below} and a blow-up criterion for this soltion were established in  \cite{Fang-Zi13}. For the compressible Oldroyd type
model based on the {\em deformation tensor}, see the results
\cite{DLZ12, HL14, HW11, Qian-Zhang10} and references therein.

The aim of this paper is to study the compressible Oldroyd-B model \eqref{COB2dimensionless} in the critical framework. This approach goes back to the pioneering work by Fujita and Kato \cite{FK64} for the classical incompressible Navier-Stokes equations.   We refer to \cite{C97,CMP93,KT01,W80} for a recent panorama. Strictly speaking, the compressible Oldroyd-B model does not have any scaling invariance. However, if we neglect the coupling term $\dv \tau$ and the damping term $\tau$, it is found that \eqref{COB4} is invariant under the transformation
\begin{gather*}
(\rho_0, u_0, \tau_0)\rightarrow (\rho_0(\ell x),\ell u_0(\ell x),\tau_0(\ell x)),\\
( \rho(t,x), u(t,x), \tau(t,x), p(t,x))\rightarrow (\rho(\ell^2t, \ell x), \ell u(\ell^2t, \ell x), \tau(\ell^2t, \ell x), \ell^2p(\ell^2t,\ell x)),
\end{gather*}
for any $\ell>0$. This motivates us to consider system \eqref{COB2dimensionless} with initial data $(a_0, u_0,\tau_0)\in \dot{B}^{\fr{d}{2}-1,\fr{d}{2}}_{2,1}\times  \left(\dot{B}^{\fr{d}{2}-1}_{2,1}\right)^d\times \left(\dot{B}^{\fr{d}{2}}_{2,1}\right)^{d\times d}$. Different from our previous results in \cite{FZ14}, the coupling constant $\om$ we investigate here is not small any more and thus the problem is much more complicated. As a matter of fact, in \cite{FZ14}  $(a, u)$ and $\tau$ are treated separately. To be more precise,  we bound $(a, u)$ by using the estimates obtained by Danchin \cite{Danchin00} for the linearized system of barotropic compressible Naviter-Stokes equations, namely
\beq\label{LNS}
\begin{cases}
\begin{array}{rrl}
a_t+\Lambda d&=&0,\\
d_t-\Dl d-\Lambda a&=&0,
\end{array}
\end{cases}
\eeq
where $\Lambda:=(-\Dl)^{\fr12}$. The linear coupling term $\dv\tau$ in the momentum equation is regarded as a source term, and the symmetric tensor of constrains $\tau$ is bounded with the aid of the well known estimates for transport equation. This is an easy way to get the global estimates of $(a, u, \tau)$ since the coupling between $a$ and $\tau$ is neglected, nevertheless, in order to close the estimates, the price we have to pay is to impose some smallness restriction on the coupling constant $\om$. For general $\om\in(0,1)$, we must consider fully the coupling between $a, u$ and $\tau$, and deal with $(a, u,\tau)$ as a whole.

Let us now explain the main ingredients of the proof. Motivated by our previous result for incompressible Oldroyd-B model \cite{Zi-Fang-Zhang14}, we first consider the system of $(a, u, \dv\tau)$. Indeed, in view of the scaling above, $u$ possesses the same regularity with $\dv\tau$ instead of $\tau$, that is why it is more convenient to treat $(a, u,\dv\tau)$ as a whole in the process of energy estimates in Besov spaces. To do so, our proof relies heavily on  the following decomposition on $u$ and $\dv\tau$:
 \beno
 u=\Pe u+\Pe^\bot u, \quad\mathrm{and}\quad \dv\tau=\Pe \dv\tau+\Pe^\bot\dv\tau,
 \eeno
where $\Pe:=\mathrm{Id}+\nb(-\Dl)^{-1}\dv$ is the Leray operator, and $\Pe^\bot:=-\nb(-\Dl)^{-1}\dv$. Applying $\Pe^\bot$ and $\Pe^\bot\dv$ to the second and third equation of \eqref{COB2dimensionless} respectively, we obtain
\begin{eqnarray}\label{COB-c}
\begin{cases}
\displaystyle \pr_ta+ \dv u+\dv (a u)=0,\\
\displaystyle \pr_t\Pe^\bot u+\Pe^\bot((u\cdot\nb)  u)-\frac{2(1-\om)}{\Rey}\Dl \Pe^\bot u+\nb a-\frac{1}{\Rey}\Pe^\bot\dv\tau \\
\displaystyle\quad\quad\quad\quad\quad\quad\quad\quad=\Pe^{\bot}\left(-\frac{1}{\Rey}I(a)\left(\mathcal{A}u+\dv\tau \right)+\tl{K}(a)\nb a\right),\\
\displaystyle\pr_t\Pe^\bot\dv\tau+\fr{1}{\We}\Pe^\bot\dv\tau-\fr{2\om}{\We} \Dl\Pe^\bot u=-\Pe^\bot\dv\left((u\cdot\nb) \tau+ g_\al(\tau,
\nabla u)\right),
\end{cases}
\end{eqnarray}
and
\begin{eqnarray}\label{COB-i}
\begin{cases}
\displaystyle \pr_t\Pe u+\Pe\left((u\cdot\nb) u\right)-\frac{1-\om}{\Rey}\Dl\Pe u-\frac{1}{\Rey}\Pe\dv\tau =-\frac{1}{\Rey}\Pe\left(I(a)\left(\mathcal{A}u+\dv\tau \right)\right),\\
\displaystyle\pr_t\Pe\dv\tau+\fr{1}{\We}\Pe\dv\tau-\fr{\om}{\We} \Dl\Pe u=-\Pe\dv\left((u\cdot\nb) \tau+ g_\al(\tau,
\nabla u)\right).
\end{cases}
\end{eqnarray}
Obviously, the linear part of \eqref{COB-i} is the same with that of the auxiliary system of $(u, \Pe\dv\tau)$ for the incompressible Oldroyd-B model (see \cite{Zi-Fang-Zhang14} (1.12)), so the key point of this paper is to deal with the so called compressible  part, i. e., system \eqref{COB-c}. For the same reason as the case of  barotropic Navier-Stokes equations \cite{Danchin00}, we have to study the high frequency and low frequency part of system \eqref{COB-c} in different ways. Roughly speaking, it is necessary to bound
\be\label{kq}
k_q+\min(2^{2q}, 1)\int_0^tk_qdt',
\ee
where
\beqno
k_q=\begin{cases}
\|a_q\|_{L^2}+\|\Pe^\bot u_q\|_{L^2}+\|\Pe^\bot\dv\tau_q\|_{L^2}, \quad\quad\mathrm{if}\quad q\le q_0,\\
\|\nb a_q\|_{L^2}+\|\Pe^\bot u_q\|_{L^2}+\|\Pe^\bot\dv\tau_q\|_{L^2}, \quad\,\,\mathrm{if}\quad q>q_0,
       \end{cases}
\eeqno
for some $q_0\in\Z$. In order to get the decay of $a$ and $\dv\tau$, we make full use of the linear coupling terms $\nb a$ and $\dv \tau$, and the estimates are very delicate both in low and high frequency cases.  Furthermore, different from the barotropic Navier-Stokes equations,  it is shown that there is a gap between the high frequency and low frequency  estimates of $k_q$. In other words, we can estimate \eqref{kq} for $q> q_0$ and $q\le q_1$ with $q_1<q_0$. To overcome this difficulty, we introduce a new quantity
    \beno
    \tl{k}_q:=\|a_q\|_{L^2}+\|\Pe^\bot u_q\|_{L^2}+\|\Lambda^{-1}\Pe^\bot\dv\tau_q\|_{L^2}, \quad\mathrm{if}\quad q_1< q\le q_0.
    \eeno
According to Bernstein's inequality, $ \tl{k}_q$ is equivalent to $k_q$ if $q_1< q\le q_0$. Therefor, it suffices to bound \eqref{kq} with  $k_q$ replaced by $\tl{k}_q$ for $q_1< q\le q_0$. This is the main novel part of this paper. Once the estimates of the compressible part $(a, \Pe^\bot u, \Pe^\bot\dv\tau)$  of $(a, u, \dv\tau)$ is obtained, we can bound the incompressible part $(\Pe u, \Pe\dv\tau)$ in a similar and easier way. Putting them together, we get the estimates of $(a, u, \dv\tau)$. On this basis, we bound $\tau$ directly via the third equation of \eqref{COB2dimensionless}, and hence obtain the global estimates of $(a, u, \tau)$ in the end. More details can be found in Section 3.

We shall obtain the existence and uniqueness of a solution $(a, u,\tau)$ to \eqref{COB2dimensionless} in the following space.

\begin{def}\label{space}
For $T>0$ and $s\in\R$, let us denote
\beno
\mathcal{E}^s_T:=\tl{C}_T(\dot{B}^{s-1,s}_{2,1})\cap L^1_T(\dot{B}^{s+1,s}_{2,1})\times\left(\tl{C}_T(\dot{B}^{s-1}_{2,1})\cap L^1_T(\dot{B}^{s+1}_{2,1})\right)^d\times\left(\tl{C}_T(\dot{B}^{s}_{2,1})\cap L^1_T(\dot{B}^{s}_{2,1})\right)^{d\times d}.
\eeno
We use the notation $\mathcal{E}^s$ if $T=\infty$, changing $[0, T]$ into $[0,\infty)$ in the definition above.
\end{def}

Our main result reads as follows:
\begin{thm}\label{thm-g}
Let $d\ge2$. Assume that $(a_0, u_0,\tau_0)\in \dot{B}^{\fr{d}{2}-1,\fr{d}{2}}_{2,1}\times  \left(\dot{B}^{\fr{d}{2}-1}_{2,1}\right)^d\times \left(\dot{B}^{\fr{d}{2}}_{2,1}\right)^{d\times d}$. There exist two positive constants $c$ and $M$, depending on $d, \om, \Rey$ and $\We$, such that if
 \beno
 \|a_0\|_{\dot{B}^{\fr{d}{2}-1,\fr{d}{2}}_{2,1}} +\|u_0\|_{\dot{B}^{\fr{d}{2}-1}_{p,1}}+\|\tau_0\|_{\dot{B}^{\fr{d}{2}}_{2,1}}\le c,
  \eeno
  then system \eqref{COB2dimensionless} admits a unique global solution $(a, u,\tau)$ in $\mathcal {E}^{\fr{d}{2}}$ with
  \beno
  \|(a, u,\tau)\|_{\mathcal{E}^{\fr{d}{2}}}\leq M\left( \|a_0\|_{\dot{B}^{\fr{d}{2}-1}_{2,1}\cap\dot{B}^{\fr{d}{2}}_{2,1}} +\|u_0\|_{\dot{B}^{\fr{d}{2}-1}_{2,1}}+\|\tau_0\|_{\dot{B}^{\fr{d}{2}}_{2,1}}\right).
  \eeno
\end{thm}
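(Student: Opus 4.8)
The plan is to follow the classical scheme for compressible models in critical spaces: first solve a linearized/Friedrichs-regularized system to produce approximate solutions, then derive \emph{uniform} a priori estimates in $\mathcal{E}^{\frac d2}$, and finally pass to the limit and prove uniqueness. The heart of the matter is the a priori estimate, and by the discussion above this reduces to three blocks: the compressible part $(a,\Pe^\bot u,\Pe^\bot\dv\tau)$ solving \eqref{COB-c}, the incompressible part $(\Pe u,\Pe\dv\tau)$ solving \eqref{COB-i}, and finally $\tau$ itself recovered from the third equation of \eqref{COB2dimensionless}. I would treat all nonlinear terms ($\dv(au)$, $(u\cdot\nb)u$, $I(a)(\mathcal Au+\dv\tau)$, $\tl K(a)\nb a$, $g_\al(\tau,\nb u)$, $(u\cdot\nb)\tau$) as source terms estimated by the standard product/composition laws in Besov spaces $\dot B^{d/2}_{2,1}$, which is an algebra, together with the $\tl C_T$–$L^1_T$ duality so that time integrals of quadratic terms are controlled by the norm squared.

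The core linear estimate is for \eqref{COB-c}, localized in frequency via $\ddl$. Following Danchin's treatment of \eqref{LNS}, I would split into low frequencies $q\le q_0$ and high frequencies $q>q_0$ and construct, in each regime, a Lyapunov functional whose dissipation recovers $\min(2^{2q},1)\int_0^t k_q\,dt'$ as in \eqref{kq}. Concretely: take the $L^2$ inner products of the three equations of \eqref{COB-c} with $a_q$, $\Pe^\bot u_q$, $\Pe^\bot\dv\tau_q$; the parabolic term $-\frac{2(1-\om)}{\Rey}\Dl\Pe^\bot u_q$ gives $2^{2q}\|\Pe^\bot u_q\|_{L^2}^2$, the damping $\frac1\We\Pe^\bot\dv\tau_q$ gives $\|\Pe^\bot\dv\tau_q\|_{L^2}^2$, but $a_q$ has \emph{no} self-dissipation, so one adds a carefully weighted cross term $\int\nb a_q\cdot\Pe^\bot u_q$ (resp.\ $\int a_q\,\Lambda\Pe^\bot u_q$ after an integration by parts) to extract $2^{2q}\|a_q\|_{L^2}^2$ in high frequency and $\|\nb a_q\|_{L^2}^2$-type control in low frequency, exactly the mechanism that forces the two different definitions of $k_q$. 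The coupling $\dv\tau\leftrightarrow D(u)$ is handled because $\Pe^\bot\dv\tau_q$ already carries a damping term; one only needs the cross terms to have the right sign up to absorbable errors, which fixes $q_0$ and the relative weights in terms of $\om,\Rey,\We$.

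The main obstacle — and the novel point flagged in the introduction — is the \textbf{frequency gap}: the high-frequency Lyapunov functional closes for $q>q_0$ and the low-frequency one only for $q\le q_1$ with $q_1<q_0$, leaving the band $q_1<q\le q_0$ uncontrolled by either. I would resolve this exactly as the author announces: in the intermediate band replace $k_q$ by $\tl k_q=\|a_q\|_{L^2}+\|\Pe^\bot u_q\|_{L^2}+\|\Lambda^{-1}\Pe^\bot\dv\tau_q\|_{L^2}$, which by Bernstein is equivalent to $k_q$ on this \emph{finitely many} scales; applying $\Lambda^{-1}$ to the third equation turns the stiff term $\frac{2\om}{\We}\Dl\Pe^\bot u$ into $\frac{2\om}{\We}\Lambda\Pe^\bot u$, matching the regularity of the cross term available from the momentum equation, and then the same inner-product/cross-term computation closes on the band. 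Summing the three frequency regimes against $2^{qd/2}$ (with the low-frequency weight $2^{q(d/2-1)}$ on $a$, reflecting the $\dot B^{d/2-1,d/2}_{2,1}$ norm) and using Gronwall together with the smallness of the data yields $\|(a,u,\dv\tau)\|\le M(\text{data})$. The incompressible block \eqref{COB-i} is strictly easier — its linear part coincides with the one already analyzed in \cite{Zi-Fang-Zhang14} for general $\om\in(0,1)$ — and once $\dv\tau$ is under control, $\tau\in\tl C_T(\dot B^{d/2}_{2,1})$ follows from the transport-damping estimate on the third equation of \eqref{COB2dimensionless} with source $g_\al(\tau,\nb u)+(u\cdot\nb)\tau$ and forcing $\frac{2\om}\We D(u)$, where $\om$ enters only as a harmless constant. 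Existence then follows by the usual compactness/Friedrichs argument, and uniqueness by estimating the difference of two solutions one regularity level below (in $\dot B^{d/2-2,d/2-1}_{2,1}\times\dot B^{d/2-1}_{2,1}\times\dot B^{d/2-1}_{2,1}$), closing via Gronwall; for $d=2$ the borderline logarithmic loss is absorbed in the standard way.
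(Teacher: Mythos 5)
Your proposal follows essentially the same route as the paper: localize \eqref{COB-c} and \eqref{COB-i} in frequency, build weighted Lyapunov functionals separately for $q>q_0$, $q\le q_1$, and the intermediate band $q_1<q\le q_0$ (where you correctly identify the key trick of applying $\Lambda^{-1}$ to the stress equation so that $\|\Lambda^{-1}\Pe^\bot\dv\tau_q\|_{L^2}$ replaces $\|\Pe^\bot\dv\tau_q\|_{L^2}$, Bernstein giving equivalence on finitely many scales), then recover $\tau$ from the damped transport equation and close by a bootstrap. The only cosmetic differences are that the paper invokes local well-posedness from an earlier work rather than re-running a Friedrichs scheme, and it inserts an explicit intermediate step to upgrade the high-frequency $L^1_t$ smoothing of $u$ before estimating $\tau$; neither affects the substance of your argument.
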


\begin{rem}
In a forthcoming paper, we will deal with the general case $p\neq2$.
\end{rem}

\begin{rem}
For incompressible Oldroyd-B model, using the estimates of the incompressible part in Section 3, we can give a new proof of the result in \cite{Zi-Fang-Zhang14} for $p=2$ without resorting to the Green matrix of the corresponding linearized system.
\end{rem}

The rest part of this paper is organized as follows. In Section 2, we introduce the tools ( the Littlewood-Paley decomposition
and paradifferertial calculus) and give some nonlinear estimates in Besov space. Section 3 is devoted to the global estimates of the paralinearized system \eqref{piOB} of \eqref{COB2dimensionless}. The proof of Theorem \ref{thm-g} is given in Section 4.

\bigbreak\noindent{\bf Notation.}\\
(1) For $a, b\in L^2$, $(a|b)$ denotes the $L^2$ inner product of $a$ and $b$.\\
(2) For $f\in \mathcal{S}'$, $\hat{f}=\mathcal{F}(f)$ is the Fourier transform of $f$; $\check{f}=\mathcal{F}^{-1}(f)$ denotes the inverse Fourier transform of $f$.\\

\section{The Functional Tool Box}
\noindent The results of the present paper rely on the use of  a
dyadic partition of unity with respect to the Fourier variables, the so-called the
\textit{Littlewood-Paley  decomposition}. Let us briefly explain how
it may be built in the case $x\in \R^d$ which the readers may see more details
in \cite{Bahouri-Chemin-Danchin11,Ch1}. Let $(\chi, \varphi)$ be a couple of $C^\infty$ functions satisfying
$$\hbox{Supp}\chi\subset\{|\xi|\leq\frac{4}{3}\},
\ \ \ \
\hbox{Supp}\varphi\subset\{\frac{3}{4}\leq|\xi|\leq\frac{8}{3}\},
$$
and
$$\chi(\xi)+\sum_{q\geq0}\varphi(2^{-q}\xi)=1,$$

$$\sum_{q\in \mathbb{Z}}\varphi(2^{-q}\xi)=1, \quad \textrm{for} \quad \xi\neq0.$$
Set $\varphi_q(\xi)=\varphi(2^{-q}\xi),$
$h_q=\mathcal{F}^{-1}(\varphi_q),$ and
$\tilde{h}=\mathcal{F}^{-1}(\chi)$. The dyadic blocks and the low-frequency cutoff operators are defined for all $q\in\mathbb{Z}$ by
$$\dot{\Delta}_{q}u=\varphi(2^{-q}\mathrm{D})u=\int_{\R^d}h_q(y)u(x-y)dy,$$
$$\dot{S}_qu=\chi(2^{-q}\mathrm{D})u=\int_{\R^d}\tl{h}_q(y)u(x-y)dy.$$
Then
\begin{equation}\label{e2.1}
u=\sum_{q\in \mathbb{Z}}\Delta_qu,
\end{equation}
holds for tempered distributions {\em modulo polynomials}. As working modulo polynomials is not appropriate for nonlinear problems, we
shall restrict our attention to the set $\mathcal {S}'_h$ of tempered distributions $u$ such that
$$
\lim_{q\rightarrow-\infty}\|\dot{S}_qu\|_{L^\infty}=0.
$$
Note that \eqref{e2.1} holds true whenever $u$ is in $\mathcal{S}'_h$ and that one may write
$$
\dot{S}_qu=\sum_{p\leq q-1}\dot{\Dl}_{p}u.
$$
Besides, we would like to mention that the Littlewood-Paley decomposition
has a nice property of quasi-orthogonality:
\begin{equation}\label{e2.2}
\dot{\Delta}_p\dot{\Delta}_qu\equiv 0\ \ \hbox{if}\ \ \ |p-q|\geq 2\ \
\hbox{and}\ \ \dot{\Delta}_p(\dot{S}_{q-1}u\dot{\Delta}_qu)\equiv 0\ \ \hbox{if}\ \ \
|p-q|\geq 5.
\end{equation}
One can now  give the definition of
homogeneous Besov spaces.
\begin{defn}\label{D2.1}
For $s\in\R$, $(p,r)\in[1,\infty]^2$, and
$u\in\mathcal{S}'(\R^d),$ we set
$$\|u\|_{\dot{B}_{p,r}^s}=\left\|2^{ sq}\|\dot{\Delta}_qu\|_{L^p} \right\|_{\ell^r}.$$
We then define the space
$\dot{B}_{p,r}^s:=\{u\in\mathcal{S}'_h(\R^d),\
\|u\|_{\dot{B}_{p,r}^s}<\infty\}$.
\end{defn}
Since homogeneous Besov spaces fail to have nice inclusion properties, it is wise to define {\em hybrid Besov spaces} where the growth conditions satisfied by the dyadic blocks are different for low and high frequencies. In fact,  hybrid Besov spaces played a crucial role for proving global well-posedness of compressible barotropic Navier-Stokes equations in critical spaces \cite{Danchin00}. Let us now define the hybrid Besov spaces that we need. Here our notations are somehow different from those in \cite{Danchin00}.
\begin{defn}\label{def-hybrid}
Let $s, t\in\R$, and
$u\in\mathcal{S}'(\R^d)$. For some fixed $q_0\in\Z$, we set
\beno
\|u\|_{\dot{B}^{s,t}_{2,1}}:=\sum_{q\le q_0}2^{qs}\|\ddl u\|_{L^2}+\sum_{q>q_0}2^{qt}\|\ddl u\|_{L^2}.
\eeno
We then define the space
$\dot{B}_{2,1}^{s,t}:=\{u\in\mathcal{S}'_h(\R^d),\
\|u\|_{\dot{B}_{2,1}^{s,t}}<\infty\}$.
\end{defn}
\begin{rem}
For all $s, t\in\R, \tl{q}_0\in\Z$, and
$u\in\mathcal{S}'(\R^d)$, setting
\beno
\|u\|_{\tl{B}^{s,t}_{2,1}}:=\sum_{q\le \tl{q}_0}2^{qs}\|\ddl u\|_{L^2}+\sum_{q>\tl{q}_0}2^{qt}\|\ddl u\|_{L^2},
\eeno
then it is easy to verify that $\|u\|_{\tl{B}^{s,t}_{2,1}}\approx\|u\|_{\dot{B}^{s,t}_{2,1}}$.
\end{rem}

\bigbreak\noindent{\bf Notation.} We will use the following notation:\\
\beno
\|u^l\|_{\dot{B}^s_{2,1}}:=\sum_{q\le q_0}2^{qs}\|\ddl u\|_{L^2},\quad \mathrm{and} \quad \|u^h\|_{\dot{B}^t_{2,1}}:=\sum_{q> q_0}2^{qt}\|\ddl u\|_{L^2}.
\eeno
Obviously,
\beno
\|u\|_{\dot{B}_{2,1}^{s,t}}=\|u^l\|_{\dot{B}^s_{2,1}}+\|u^h\|_{\dot{B}^t_{2,1}}.
\eeno

The following lemma describes the way derivatives act on spectrally localized functions.
\begin{lem}[Bernstein's inequalities]\label{Bernstein}
Let $k\in\N$ and $0<r<R$. There exists a constant $C$ depending on $r, R$ and $d$ such that for all $(a,b)\in[1,\infty]^2$, we have for all $\lm>0$ and multi-index $\al$
\begin{itemize}
\item If $\mathrm{Supp} \hat{f}\subset B(0,\lm R)$, then $\sup_{\al=k}\|\pr^\al f\|_{L^b}\le C^{k+1}\lm^{k+d(\fr1a-\fr1b)}\|f\|_{L^a}$.
\item If $\mathrm{Supp} \hat{f}\subset \mathcal{C}(0,\lm r, \lm R)$, then $C^{-k-1}\lm^k\|f\|_{L^a}\le\sup_{|\al|=k}\|\pr^\al f\|_{L^a}\le C^{k+1}\lm^k\|f\|_{L^a}$
\end{itemize}
\end{lem}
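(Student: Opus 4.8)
The plan is to reduce both statements to convolution estimates against suitably rescaled Schwartz kernels. The underlying principle is that if $\widehat{f}$ is supported in a ball or annulus of size comparable to $\lambda$, then $f$ can be recovered by convolving it against a fixed smooth function, rescaled to scale $\lambda$; differentiation and $L^a\to L^b$ mapping then follow from Young's convolution inequality together with simple scaling of $L^p$ norms of the kernel.

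For the first item, I would fix a function $\phi\in\mathcal{D}(\R^d)$ with $\phi\equiv1$ on $B(0,R)$ (for instance take $\phi=\chi$ up to a dilation, recalling that $\chi\equiv1$ near the origin by construction). If $\mathrm{Supp}\,\widehat{f}\subset B(0,\lambda R)$, then $\widehat{f}(\xi)=\phi(\xi/\lambda)\widehat{f}(\xi)$, hence $\pr^\alpha f=(\pr^\alpha g_\lambda)*f$ where $g=\mathcal{F}^{-1}\phi$ and $g_\lambda(x)=\lambda^d g(\lambda x)$. A direct computation gives $\pr^\alpha g_\lambda(x)=\lambda^{d+|\alpha|}(\pr^\alpha g)(\lambda x)$, so by Young's inequality with $1+\tfrac1b=\tfrac1a+\tfrac1c$ (i.e.\ $\tfrac1c=1-\tfrac1a+\tfrac1b$),
\beno
\|\pr^\alpha f\|_{L^b}\le\|\pr^\alpha g_\lambda\|_{L^c}\|f\|_{L^a}=\lambda^{|\alpha|+d(\fr1a-\fr1b)}\|\pr^\alpha g\|_{L^c}\|f\|_{L^a},
\eeno
using the scaling $\|\pr^\alpha g_\lambda\|_{L^c}=\lambda^{d+|\alpha|-d/c}\|\pr^\alpha g\|_{L^c}$ and $d-d/c=d(\fr1a-\fr1b)$. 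Taking the supremum over $|\alpha|=k$ and setting $C$ to absorb $\max_{|\alpha|\le k}\|\pr^\alpha g\|_{L^c}$ raised to an appropriate power yields the claimed bound with a constant of the form $C^{k+1}$; the exponential dependence on $k$ comes from bounding the derivatives of the fixed kernel $g$ uniformly, e.g.\ via $\|\pr^\alpha g\|_{L^c}\le C_0^{|\alpha|+1}$ for some $C_0=C_0(r,R,d)$, which holds since $g$ is Schwartz.

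For the second item, the right-hand inequality is proved exactly as above, replacing the ball by the annulus $\mathcal{C}(0,\lambda r,\lambda R)$ and choosing $\psi\in\mathcal{D}(\R^d)$ supported in an annulus and equal to $1$ on $\mathcal{C}(0,r,R)$, so that $\widehat{f}=\psi(\cdot/\lambda)\widehat{f}$. For the left-hand (reverse) inequality one exploits that on the annulus $|\xi|\ge\lambda r$ is bounded below, so $|\xi|^2=\sum_{j=1}^d\xi_j^2$ is invertible there: one writes $\widehat{f}(\xi)=\sum_{|\alpha|=k}c_\alpha\,\widetilde{\psi}_\alpha(\xi/\lambda)\,(i\xi)^\alpha\widehat{f}(\xi)\,\lambda^{-k}$ for suitable smooth compactly-supported multipliers $\widetilde{\psi}_\alpha$ and constants $c_\alpha$, obtained by taking $k$th powers of $\sum_j(\xi_j/\lambda)^2$ and dividing by the (nonvanishing, smooth) symbol $|\xi/\lambda|^{2k}$ times a cutoff. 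This represents $f$ as $\lambda^{-k}\sum_{|\alpha|=k}c_\alpha\,(\Theta_\alpha)_\lambda*\pr^\alpha f$ with $\Theta_\alpha$ Schwartz, whence Young's inequality gives $\|f\|_{L^a}\le C^{k+1}\lambda^{-k}\sup_{|\alpha|=k}\|\pr^\alpha f\|_{L^a}$, i.e.\ the lower bound.

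The only mildly delicate point is tracking the \emph{exponential-in-$k$} form $C^{k+1}$ of the constants rather than an unspecified $C(k)$: this requires observing that the representation of $f$ in terms of its $k$th derivatives uses at most $O(1)^k$ many fixed Schwartz kernels (the number of multi-indices of length $k$ is polynomial in $k$, hence absorbed) each with $L^1$ norm bounded by $C_0^{k+1}$, which follows from the Schwartz seminorm bounds for the dilation-invariant family of symbols $|\xi|^{-2k}$ on the fixed annulus $\{r\le|\xi|\le R\}$. This is the step I would expect to be the main obstacle if full rigor on the constant is demanded; everything else is routine Fourier support manipulation plus Young's inequality. I would present the ball case in full and note that the annulus case is identical for the upper bound, with the reverse bound as sketched above.
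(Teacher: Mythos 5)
Your proof is correct and is exactly the classical argument behind this lemma: convolution with a rescaled compactly supported cut-off plus Young's inequality for the direct bounds, and, for the reverse bound on the annulus, division by $|\xi|^{2k}$ via the identity $|\xi|^{2k}=\sum_{|\alpha|=k}\frac{k!}{\alpha!}\xi^{2\alpha}$ with the combinatorial and kernel constants absorbed into $C^{k+1}$; the paper gives no proof of its own but quotes the lemma from its references (Bahouri--Chemin--Danchin), where this very proof appears. The only remark is that the first item implicitly needs $b\ge a$ (so that your Young exponent $c$ with $1+\frac1b=\frac1a+\frac1c$ lies in $[1,\infty]$), a hypothesis the paper's statement omits but which is standard.
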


Let us now state some classical
properties for the Besov spaces.
\begin{prop}\label{prop-classical}
For all $s, s_1, s_2\in\R$, $1\le p, p_1, p_2, r, r_1, r_2\le\infty$, the following properties hold true:

\begin{itemize}
\item If $p_1\leq p_2 $  and $r_1\leq r_2,$ then
$\dot{B}_{p_1,r_1}^{s}\hookrightarrow
\dot{B}_{p_2,r_2}^{s-\frac{d}{p_1}+\frac{d}{p_2}}$.

\item If $s_1\neq s_2$ and $\theta\in(0,1)$,
$\left[\dot{B}_{p,r_1}^{s_1},\dot{B}_{p,r_2}^{s_2}\right]_{(\theta,r)}=\dot{B}_{p,r}^{\theta
s_1+(1-\theta)s_2}.$

\item For any smooth homogeneous of degree $m\in\Z$ function $F$ on $\R^d\backslash\{0\}$, the operator $F(D)$ maps $\dot{B}^{s}_{p,r}$ in $\dot{B}^{s-m}_{p,r}$.
\end{itemize}
\end{prop}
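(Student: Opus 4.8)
\textbf{Plan of proof for Theorem \ref{thm-g}.}

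The strategy is the standard Friedrichs-type / iteration scheme for global well-posedness in critical Besov spaces, built on top of the frequency-localized \emph{a priori} estimates announced in Section 3 for the paralinearized system \eqref{COB-c}--\eqref{COB-i}. The heart of the matter is the a priori bound, which I would establish first: assuming $(a,u,\tau)$ is a smooth solution on $[0,T]$ with $\|(a,u,\tau)\|_{\mathcal{E}^{d/2}_T}$ small, I would reconstruct the full $\mathcal{E}^{d/2}_T$-norm from the pieces. Concretely, one applies $\Pe^\bot$ and $\Pe^\bot\dv$ to the momentum and stress equations to get \eqref{COB-c}, then for each dyadic block $\ddl$ takes the $L^2$ inner product of the three equations with $a_q$ (or $\nb a_q$ in high frequency), $\Pe^\bot u_q$, $\Pe^\bot\dv\tau_q$ respectively, and cleverly combines them — in the low-mid frequency range $q_1<q\le q_0$ replacing $\Pe^\bot\dv\tau_q$ by $\Lambda^{-1}\Pe^\bot\dv\tau_q$ to form the quantity $\tl k_q$, which by Bernstein is comparable to $k_q$ on that range — so as to exhibit the damping $\min(2^{2q},1)$ in \eqref{kq} coming from the linear coupling terms $\nb a$, $\dv\tau$. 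Summing over $q$ against the weights $2^{q(d/2-1)}$ (low) and $2^{q(d/2)}$ (high), and handling the convection and $g_\al$ source terms by the product/commutator estimates of Section 2, yields
\beno
\|(a,\Pe^\bot u,\Pe^\bot\dv\tau)\|_{\mathcal{E}^{d/2}_T}\lesssim \|(a_0,\Pe^\bot u_0,\Pe^\bot\dv\tau_0)\|_{\dot B^{d/2-1,d/2}_{2,1}\times\dot B^{d/2-1}_{2,1}\times\dot B^{d/2-1}_{2,1}}+\|(a,u,\tau)\|_{\mathcal{E}^{d/2}_T}^2.
\eeno
The incompressible part \eqref{COB-i} has the same linear structure as the incompressible Oldroyd-B auxiliary system of \cite{Zi-Fang-Zhang14}, so its estimate is obtained in the same (easier) way, giving an analogous bound for $(\Pe u,\Pe\dv\tau)$. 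Adding, $u=\Pe u+\Pe^\bot u$ and $\dv\tau=\Pe\dv\tau+\Pe^\bot\dv\tau$ are controlled; finally $\tau$ itself is recovered directly from the transport-damping equation $\pr_t\tau+(u\cdot\nb)\tau+g_\al(\tau,\nb u)+\tfrac1\We\tau=\tfrac{2\om}\We D(u)$ by the standard $\tl L^\infty_T(\dot B^{d/2}_{2,1})\cap L^1_T(\dot B^{d/2}_{2,1})$ estimate for transport equations with damping, noting that $\|D(u)\|_{L^1_T(\dot B^{d/2}_{2,1})}\lesssim\|u\|_{L^1_T(\dot B^{d/2+1}_{2,1})}$ is already controlled. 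Collecting everything:
\beno
\|(a,u,\tau)\|_{\mathcal{E}^{d/2}_T}\le C_0\,X_0+C_1\|(a,u,\tau)\|_{\mathcal{E}^{d/2}_T}^2,\qquad X_0:=\|a_0\|_{\dot B^{d/2-1}_{2,1}\cap\dot B^{d/2}_{2,1}}+\|u_0\|_{\dot B^{d/2-1}_{2,1}}+\|\tau_0\|_{\dot B^{d/2}_{2,1}}.
\eeno

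Once this a priori estimate is in hand, global existence follows by the usual continuation argument: if $c\le\tfrac1{4C_0C_1}$ and $X_0\le c$, a standard bootstrap on the continuous quantity $T\mapsto\|(a,u,\tau)\|_{\mathcal{E}^{d/2}_T}$ shows that on its maximal interval of existence the norm never exceeds $M X_0$ with $M=2C_0$, so the solution cannot blow up and is global. To build the solution in the first place I would run a Friedrichs approximation: project the nonlinearity onto frequencies in the annulus $\{1/n\le|\xi|\le n\}$ to obtain an ODE system in a Banach space, solve it locally by Cauchy--Lipschitz, propagate the uniform-in-$n$ bound $\le M X_0$ by the very same a priori estimate (which is stable under the truncation), and pass to the limit; compactness in $L^2_{loc}$ of the low-frequency parts plus the uniform high-frequency tails let one identify the limit as a genuine solution in $\mathcal{E}^{d/2}$, with the claimed bound $\|(a,u,\tau)\|_{\mathcal{E}^{d/2}}\le M X_0$ inherited by Fatou's lemma for the $\ell^1(\Z)$ and $L^1_t$ sums. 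For uniqueness, take two solutions $(a_1,u_1,\tau_1)$, $(a_2,u_2,\tau_2)$ with the same data, write the system for the differences $(\delta a,\delta u,\delta\tau)$, and estimate it in a space with one derivative less, namely $\tl C_T(\dot B^{d/2-2,d/2-1}_{2,1})\times\tl C_T(\dot B^{d/2-2}_{2,1})\cap L^1_T(\dot B^{d/2}_{2,1})\times\tl C_T(\dot B^{d/2-1}_{2,1})$; the same linear-damping mechanism plus the bilinear estimates give a Gronwall-type inequality forcing the differences to vanish on a small interval, and a connectedness argument extends uniqueness to all of $[0,\infty)$. (In dimension $d=2$ the critical-space product estimates have the usual logarithmic subtlety, which is absorbed because the relevant indices $d/2-2=-1$ and $d/2-1=0$ still allow the needed products; I would flag this point but it is handled exactly as in \cite{Danchin00,FZ14}.)

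The main obstacle — and the one that forces the novel device in this paper — is the frequency gap in the a priori estimate for \eqref{COB-c}: the clean estimate on $k_q$ with the damping $\min(2^{2q},1)$ is available only for $q>q_0$ and for $q\le q_1$ with $q_1<q_0$, leaving the band $q_1<q\le q_0$ uncontrolled by the direct energy method. Resolving it requires introducing $\tl k_q=\|a_q\|_{L^2}+\|\Pe^\bot u_q\|_{L^2}+\|\Lambda^{-1}\Pe^\bot\dv\tau_q\|_{L^2}$ on that band — which is legitimate because $\Lambda^{-1}$ is bounded between $L^2$-localized pieces there by Bernstein, so $\tl k_q\approx k_q$ — and redoing the energy computation for this rescaled unknown so that the cross terms between $a$, $\Pe^\bot u$ and $\Lambda^{-1}\Pe^\bot\dv\tau$ close with the correct damping. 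All the other ingredients (the product laws, commutator estimates, and transport estimates of Section 2; the classical embeddings and interpolation of Proposition \ref{prop-classical}; the $F(D)$-multiplier mapping property used to pass between $\Pe^\bot\dv\tau$, $\tau$ and $\Lambda^{-1}\Pe^\bot\dv\tau$) are routine, and the truncation/limiting procedure is entirely standard. So essentially all the work, and all the care, goes into the Section 3 estimate of the compressible part; the proof of Theorem \ref{thm-g} in Section 4 is then a short assembly of that estimate with the classical iteration machinery.
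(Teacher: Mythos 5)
Your proposal does not address the statement it was supposed to prove. The statement is Proposition \ref{prop-classical}, a list of three classical properties of homogeneous Besov spaces: the embedding $\dot{B}_{p_1,r_1}^{s}\hookrightarrow \dot{B}_{p_2,r_2}^{s-d/p_1+d/p_2}$, the real interpolation identity $\left[\dot{B}_{p,r_1}^{s_1},\dot{B}_{p,r_2}^{s_2}\right]_{(\theta,r)}=\dot{B}_{p,r}^{\theta s_1+(1-\theta)s_2}$, and the fact that a smooth homogeneous multiplier $F(D)$ of degree $m$ maps $\dot{B}^s_{p,r}$ into $\dot{B}^{s-m}_{p,r}$. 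What you wrote instead is a plan of proof for Theorem \ref{thm-g} (the global well-posedness result), in which Proposition \ref{prop-classical} is merely invoked as a ``routine ingredient.'' However reasonable that plan may be as an outline of Sections 3--4, it contains no argument whatsoever for any of the three items above, so as a proof of the stated proposition it is a complete miss, not a partial one.

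For the record, the proposition itself is standard (the paper cites it without proof, referring to texts such as Bahouri--Chemin--Danchin) and each item is proved at the level of dyadic blocks. The embedding follows from Bernstein's inequality (Lemma \ref{Bernstein}) applied to each $\dot{\Delta}_q u$, which converts $L^{p_1}$ control into $L^{p_2}$ control at the cost of the factor $2^{qd(1/p_1-1/p_2)}$, together with $\ell^{r_1}\subset\ell^{r_2}$. The interpolation identity reduces, via the isomorphism $u\mapsto(2^{qs}\|\dot{\Delta}_qu\|_{L^p})_q$, to real interpolation of weighted $\ell^{r}$ spaces with weights $2^{qs_1}$ and $2^{qs_2}$, where $s_1\neq s_2$ is essential. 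The multiplier property follows by writing $F(D)\dot{\Delta}_qu=\big(\mathcal{F}^{-1}(F\,\tilde\varphi(2^{-q}\cdot))\big)\ast\dot{\Delta}_qu$ with $\tilde\varphi$ equal to $1$ on the annulus supporting $\varphi$, noting by homogeneity and scaling that $\|\mathcal{F}^{-1}(F\,\tilde\varphi(2^{-q}\cdot))\|_{L^1}\le C2^{qm}$, and applying Young's inequality; summing the resulting bounds $2^{q(s-m)}\|F(D)\dot{\Delta}_qu\|_{L^p}\le C2^{qs}\|\dot{\Delta}_qu\|_{L^p}$ in $\ell^r$ gives the claim. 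None of this appears in your submission, so you should either supply such an argument or explicitly cite the standard reference, rather than substitute the proof sketch of a different theorem.
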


Next we  recall a few nonlinear estimates in Besov spaces which may be
obtained by means of paradifferential calculus. Firstly introduced
 by J. M. Bony in \cite{Bony81}, the paraproduct between $f$
and $g$ is defined by
$$\dot{T}_fg=\sum_{q\in\mathbb{Z}}\dot{S}_{q-1}f\dot{\Delta}_qg,$$
and the remainder is given by
$$\dot{R}(f,g)=\sum_{q\geq -1}\dot{\Delta}_qf\tilde{\dot{\Delta}}_qg$$
with
$$\tilde{\dot{\Delta}}_qg=(\dot{\Delta}_{q-1}+\dot{\Delta}_{q}+\dot{\Delta}_{q+1})g.$$
We have the following so-called Bony's decomposition:
 \be\label{Bony-decom}
fg=\dot{T}_fg+\dot{T}_gf+\dot{R}(f,g).
 \ee
The paraproduct $\dot{T}$ and the remainder $\dot{R}$ operators satisfy the following
continuous properties.

\begin{prop}\label{p-TR}
For all $s\in\mathbb{R}$, $\sigma>0$, and $1\leq p, p_1, p_2, r, r_1, r_2\leq\infty,$ the
paraproduct $\dot T$ is a bilinear, continuous operator from $L^{\infty}\times \dot{B}_{p,r}^s$ to $
\dot{B}_{p,r}^{s}$ and from $\dot{B}_{\infty,r_1}^{-\sigma}\times \dot{B}_{p,r_2}^s$ to
$\dot{B}_{p,r}^{s-\sigma}$  with $\frac{1}{r}=\min\{1, \frac{1}{r_1}+\frac{1}{r_2}\}$. The remainder $\dot R$ is bilinear continuous from
$\dot{B}_{p_1,r_1}^{s_1}\times \dot{B}_{p_2,r_2}^{s_2}$ to $
\dot{B}_{p,r}^{s_1+s_2}$ with
$s_1+s_2>0$,  $\frac{1}{p}=\frac{1}{p_1}+\frac{1}{p_2}\leq1$, and $\frac{1}{r}=\frac{1}{r_1}+\frac{1}{r_2}\leq1$.
\end{prop}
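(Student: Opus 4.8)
\textbf{Proof plan for Theorem \ref{thm-g}.}

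The strategy is the classical Friedrichs-type approximation scheme combined with the \emph{a priori} estimates for the paralinearized system, so the heart of the argument is the construction and control of approximate solutions, the passage to the limit, and a separate uniqueness argument. First I would set up the approximation: truncate the initial data in frequency (via $\dot S_n$) and solve the resulting ODE system in a space of functions with frequencies in an annulus, obtaining smooth local-in-time solutions $(a^n, u^n, \tau^n)$ by a fixed-point argument. The spectral truncation makes all the nonlinear terms locally Lipschitz in the relevant Banach space, so local existence on a maximal interval $[0, T^n)$ is standard.

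The core step is then the uniform-in-$n$ \emph{a priori} estimate. Here I would invoke the machinery developed in Section 3: decompose $(a, u, \dv\tau)$ into its incompressible part $(\Pe u, \Pe\dv\tau)$ governed by \eqref{COB-i} and its compressible part $(a, \Pe^\bot u, \Pe^\bot\dv\tau)$ governed by \eqref{COB-c}, and perform localized energy estimates on each dyadic block. For the compressible part one must track the hybrid quantity $k_q$ in \eqref{kq}, switching to $\tl k_q$ in the intermediate frequency range $q_1 < q \le q_0$ to close the gap between the high- and low-frequency regimes; summing against the appropriate weights $2^{q(d/2-1)}$ (low) and $2^{q(d/2)}$ (high) and using the smallness of the data and of $\|(a,u,\tau)\|_{\mathcal E^{d/2}}$ to absorb the nonlinear contributions (estimated via Bony's decomposition \eqref{Bony-decom} and Propositions \ref{prop-classical}, \ref{p-TR}, together with the product and composition estimates for $I(a)$, $\tl K(a)$, $g_\al(\tau, \nabla u)$ stated in Section 2) yields a bound of the form $X(t) \le C\,X(0) + C\,X(t)^2$, where $X(t) = \|(a^n, u^n, \dv\tau^n)\|_{\mathcal E^{d/2}_t}$. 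A continuity/bootstrap argument then gives $X(t) \le 2C\,X(0) \le M c$ for all $t$, provided $c$ is chosen small enough; in particular $T^n = \infty$. Finally $\tau$ itself is recovered from $\dv\tau$ and the transport-damping structure of the third equation of \eqref{COB2dimensionless}: applying the standard estimates for the equation $\pr_t\tau + (u\cdot\nb)\tau + \tfrac{1}{\We}\tau = 2\tfrac{\om}{\We}D(u) - g_\al(\tau,\nabla u)$ in $\tl C_T(\dot B^{d/2}_{2,1}) \cap L^1_T(\dot B^{d/2}_{2,1})$, with the damping term supplying the $L^1_T$ control, closes the full estimate for $(a^n, u^n, \tau^n)$ in $\mathcal E^{d/2}$ uniformly in $n$.

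With uniform bounds in hand, I would extract a weakly-$*$ convergent subsequence and upgrade the convergence: by interpolation between the uniform $\mathcal E^{d/2}$ bound and strong convergence in a lower-regularity space (obtained from the equations via Aubin--Lions-type compactness, using that $\pr_t(a^n,u^n,\tau^n)$ is bounded in a suitable negative-index space), one passes to the limit in every term of \eqref{COB2dimensionless}. Lower semicontinuity of the Besov norms then gives the limit $(a,u,\tau) \in \mathcal E^{d/2}$ with the asserted bound $\|(a,u,\tau)\|_{\mathcal E^{d/2}} \le M(\|a_0\|_{\dot B^{d/2-1}_{2,1}\cap \dot B^{d/2}_{2,1}} + \|u_0\|_{\dot B^{d/2-1}_{2,1}} + \|\tau_0\|_{\dot B^{d/2}_{2,1}})$. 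Time continuity (the $\tl C$ part) follows from the equations once the regularity is known. For uniqueness, I would take two solutions with the same data, write the system for the difference, and estimate it in a space one derivative below the solution space — that is, in $\tl C_T(\dot B^{d/2-2,d/2-1}_{2,1}) \times \tl C_T(\dot B^{d/2-2}_{2,1})^d \times \tl C_T(\dot B^{d/2-1}_{2,1})^{d\times d}$ or similar — using the same paralinearized structure; a Gronwall argument then forces the difference to vanish.

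The main obstacle, which is exactly the point emphasized in the introduction, is the uniform \emph{a priori} estimate for the compressible part \eqref{COB-c} when $\om$ is not small: one cannot treat $\dv\tau$ merely as a source term, and the naive hybrid estimate on $k_q$ leaves a gap between the frequency range $q \le q_1$ where the low-frequency argument works and $q > q_0$ where the high-frequency argument works. Overcoming this requires the rescaled quantity $\tl k_q = \|a_q\|_{L^2} + \|\Pe^\bot u_q\|_{L^2} + \|\Lambda^{-1}\Pe^\bot\dv\tau_q\|_{L^2}$ on the intermediate band $q_1 < q \le q_0$ and a careful choice of the cross-term combinations (exploiting the linear couplings $\nb a$ and $\dv\tau$) so that the damping in the $\tau$-equation and the parabolic smoothing in the $u$-equation together produce genuine integrated decay for $a$ and $\dv\tau$ at those frequencies; this is where all the delicate constant-tracking (dependence of $c, M$ on $d, \om, \Rey, \We$) takes place.
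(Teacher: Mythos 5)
Your proposal does not address the statement at all: the statement to be proved is Proposition \ref{p-TR}, the classical continuity properties of the paraproduct $\dot T$ and the remainder $\dot R$ in homogeneous Besov spaces, whereas what you have written is a proof plan for the global well-posedness result, Theorem \ref{thm-g}. Nothing in your text concerns the mapping properties of $\dot T_fg=\sum_q\dot S_{q-1}f\,\dot\Delta_qg$ or of $\dot R(f,g)=\sum_q\dot\Delta_qf\,\tilde{\dot\Delta}_qg$, so as a proof of Proposition \ref{p-TR} it is vacuous; in fact it presupposes the proposition (you invoke it to estimate the nonlinear terms $F$, $G$, $L$), so the argument is circular as far as this statement is concerned. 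Note that the paper itself does not reprove this proposition either: it is a standard result of paradifferential calculus quoted from the literature (Bony, and Bahouri--Chemin--Danchin).

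For the record, an actual proof runs along the following lines. For $\dot T$, the quasi-orthogonality property \eqref{e2.2} shows that $\dot S_{q-1}f\,\dot\Delta_qg$ is spectrally supported in an annulus of size $2^q$, so $\dot\Delta_p\dot T_fg$ only involves the finitely many $q$ with $|p-q|\le 4$; H\"older gives $\|\dot S_{q-1}f\,\dot\Delta_qg\|_{L^p}\le\|\dot S_{q-1}f\|_{L^\infty}\|\dot\Delta_qg\|_{L^p}$, and one bounds $\|\dot S_{q-1}f\|_{L^\infty}$ either by $\|f\|_{L^\infty}$ or, in the second case, by $C2^{q\sigma}$ times an $\ell^{r_1}$-sequence built from $2^{-q'\sigma}\|\dot\Delta_{q'}f\|_{L^\infty}$ (here $\sigma>0$ is needed to sum the geometric series), after which Young's inequality for sequences yields the $\ell^r$ bound with $\frac1r=\min\{1,\frac1{r_1}+\frac1{r_2}\}$. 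For $\dot R$, the block $\dot\Delta_qf\,\tilde{\dot\Delta}_qg$ is supported in a ball of radius $C2^q$, so $\dot\Delta_p\dot R(f,g)$ collects all $q\ge p-N_0$; H\"older with $\frac1p=\frac1{p_1}+\frac1{p_2}$ and the weight rearrangement $2^{p(s_1+s_2)}=2^{(p-q)(s_1+s_2)}2^{q(s_1+s_2)}$ reduce the estimate to a convolution of sequences in which the kernel $2^{(p-q)(s_1+s_2)}\mathbf{1}_{\{p-q\le N_0\}}$ is summable precisely because $s_1+s_2>0$, and Young's inequality with $\frac1r=\frac1{r_1}+\frac1{r_2}$ concludes. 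If you intend your text as a proof sketch of Theorem \ref{thm-g}, it should be submitted against that statement instead.
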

In view of \eqref{Bony-decom}, Proposition \ref{p-TR} and Bernstein's inequalities,  one easily deduces the following  product estimates:
\begin{coro}\label{coro-product}
Let $p\in[1,\infty]$. If $s_1, s_2\le \fr{d}{p}$ and $s_1+s_2>d\max \{0,\fr{2}{p}-1\}$, then there holds
\be\label{product1}
\|uv\|_{\dot{B}^{s_1+s_2-\frac{d}{p}}_{p,1}}\leq C\|u\|_{\dot{B}^{s_1}_{p,1}}\|v\|_{\dot{B}^{s_2}_{p,1}}.
\ee
\end{coro}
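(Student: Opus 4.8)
The plan is to establish the product estimate \eqref{product1} directly from Bony's decomposition \eqref{Bony-decom}, estimating each of the three pieces $\dot{T}_uv$, $\dot{T}_vu$ and $\dot{R}(u,v)$ separately via Proposition \ref{p-TR}, using the embeddings of Proposition \ref{prop-classical} to land everything in the target space $\dot{B}^{s_1+s_2-d/p}_{p,1}$.

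First I would handle the paraproduct terms. For $\dot{T}_uv$, since $s_1\le d/p$, the embedding $\dot{B}^{s_1}_{p,1}\hookrightarrow \dot{B}^{s_1-d/p}_{\infty,1}$ holds; then the second mapping property of $\dot{T}$ in Proposition \ref{p-TR} (with $\sigma = d/p - s_1 \ge 0$, and the borderline case $\sigma = 0$ covered by the $L^\infty$ statement together with $\dot{B}^0_{\infty,1}\hookrightarrow L^\infty$) gives $\|\dot{T}_uv\|_{\dot{B}^{s_1+s_2-d/p}_{p,1}}\lesssim \|u\|_{\dot{B}^{s_1-d/p}_{\infty,1}}\|v\|_{\dot{B}^{s_2}_{p,1}}\lesssim \|u\|_{\dot{B}^{s_1}_{p,1}}\|v\|_{\dot{B}^{s_2}_{p,1}}$. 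The term $\dot{T}_vu$ is symmetric: use $s_2\le d/p$ to embed $v$ into $\dot{B}^{s_2-d/p}_{\infty,1}$ and argue identically. For the remainder $\dot{R}(u,v)$, I would apply the $\dot{R}$ statement in Proposition \ref{p-TR} with $p_1=p_2=p$, $r_1=r_2=1$, which requires $s_1+s_2>0$ and $2/p\le 1$; this gives $\dot{R}(u,v)\in \dot{B}^{s_1+s_2}_{p/2,1}$, and then the embedding $\dot{B}^{s_1+s_2}_{p/2,1}\hookrightarrow \dot{B}^{s_1+s_2-d/p}_{p,1}$ (shift of index $d/(p/2)-d/p = d/p$) finishes this piece. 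Summing the three bounds yields \eqref{product1}.

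The main subtlety — not really an obstacle, but the point requiring care — is the range of validity of the hypotheses, in particular the case $p>2$. When $p>2$ one has $2/p<1$, so $\dot{R}(u,v)$ lands in $\dot{B}^{s_1+s_2}_{p/2,1}$ with $p/2\in[1,\infty)$ and the condition $s_1+s_2>0$ from Proposition \ref{p-TR} suffices; the stated hypothesis $s_1+s_2>d\max\{0,2/p-1\}=0$ is then exactly what is needed. When $p<2$, $p/2<1$ is not allowed, so one must instead split $\dot{R}(u,v)$ by first embedding one factor into an $L^\infty$-based space: use $\dot{B}^{s_i}_{p,1}\hookrightarrow \dot{B}^{s_i-d/p+d/q}_{q,1}$ for a suitable $q$ so that the two integrability indices become conjugate-summable to $p$, which forces the stronger regularity requirement $s_1+s_2 > d(2/p-1)$. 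I would also double-check the endpoint behaviour of the paraproduct estimate when $\sigma=0$ (i.e.\ $s_i=d/p$), handled via the $L^\infty\times\dot{B}^s_{p,r}\to\dot{B}^s_{p,r}$ branch composed with $\dot{B}^0_{\infty,1}\hookrightarrow L^\infty$. Since the corollary as stated is the standard one and all ingredients are in place, the proof is essentially a bookkeeping exercise in combining Proposition \ref{p-TR}, Proposition \ref{prop-classical} and Bernstein's inequalities.
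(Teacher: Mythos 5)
Your proof is correct and follows exactly the route the paper intends: the paper itself simply asserts that the corollary "is easily deduced" from Bony's decomposition \eqref{Bony-decom}, the paraproduct/remainder continuity in Proposition \ref{p-TR}, and the embeddings and Bernstein inequalities, and gives no further details. Your treatment of the paraproducts (embedding $u\in\dot B^{s_1}_{p,1}\hookrightarrow\dot B^{s_1-d/p}_{\infty,1}$, then using the $\dot B^{-\sigma}_{\infty,1}\times\dot B^{s_2}_{p,1}\to\dot B^{s_2-\sigma}_{p,1}$ branch, with the borderline $\sigma=0$ handled via $\dot B^0_{\infty,1}\hookrightarrow L^\infty$) and of the remainder for $p\ge 2$ is exactly the expected argument, and it is the only case the paper actually uses, since all the analysis is carried out in $\dot B^{s}_{2,1}$.

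One clarification on your remainder argument for $p<2$: the phrase ``embedding one factor into an $L^\infty$-based space,'' read literally, would give $\dot R(u,v)\in\dot B^{(s_1-d/p)+s_2}_{p,1}$ only under $s_1+s_2>d/p$, which is strictly stronger than the stated hypothesis for $p>1$. What one actually does (and what your final displayed condition correctly reflects) is to embed \emph{both} factors into $L^2$-based spaces, $u\in\dot B^{s_1-d/p+d/2}_{2,1}$ and $v\in\dot B^{s_2-d/p+d/2}_{2,1}$, apply the remainder estimate with $p_1=p_2=2$ to get $\dot R(u,v)\in\dot B^{s_1+s_2-d(2/p-1)}_{1,1}$ under $s_1+s_2>d(2/p-1)$, and then embed $\dot B^{\,\cdot\,}_{1,1}\hookrightarrow\dot B^{\,\cdot\,-d+d/p}_{p,1}$ to land in $\dot B^{s_1+s_2-d/p}_{p,1}$. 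This is a presentation slip rather than a gap, and is in any case irrelevant for the $p=2$ setting used throughout the paper.
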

In the following, we shall give a commutator estimate, which will be used to deal with the convection terms.
\begin{lem}\label{lem-commu}
Let $\Lm:=\sqrt{-\Dl}$, then
\beq\label{commu}
\|[\Lm^{-1}, \dot{S}_{q-1}v\cdot\nb]\ddl u\|_{L^2}\le C\|\nb \dot{S}_{q-1}v\|_{L^\infty}\|\Lm^{-1}\ddl u\|_{L^2}.
\eeq
\end{lem}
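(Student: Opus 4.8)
The plan is to unwind the commutator using the convolution representation of the paraproduct-type operators and then exploit the kernel of $\Lm^{-1}$. First I would write, with $h_j=\mathcal{F}^{-1}(\varphi_j)$ the kernel of $\ddj$ and $S_{q-1}v=\chi(2^{-(q-1)}D)v$ a smooth low-frequency function, the difference
\beno
[\Lm^{-1}, \dot{S}_{q-1}v\cdot\nb]\ddl u
= \Lm^{-1}\big(\dot{S}_{q-1}v\cdot\nb \ddl u\big) - \dot{S}_{q-1}v\cdot\nb \Lm^{-1}\ddl u .
\eeno
Since $\Lm^{-1}$ is the Fourier multiplier by $|\xi|^{-1}$, which is homogeneous of degree $-1$ and smooth away from the origin, and since $\ddl u$ is spectrally supported in an annulus $\mathcal{C}(0,c2^q, C2^q)$, the operator $\Lm^{-1}$ acting on such functions coincides with convolution against a function $\mathcal{K}_q(x)=2^{(d-1)q}\mathcal{K}(2^q x)$ where $\mathcal{K}\in L^1(\R^d)$ (this is the standard fact that a $\mathcal C^\infty$ homogeneous multiplier of degree $m$ restricted to an annulus has an integrable, rapidly decaying kernel; cf. the localization trick with an auxiliary $\tilde\varphi$ such that $\tilde\varphi\varphi=\varphi$). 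Thus $\Lm^{-1}\ddl u = \mathcal K_q * \ddl u$, and the commutator becomes
\beno
\big([\Lm^{-1}, \dot{S}_{q-1}v\cdot\nb]\ddl u\big)(x)
= \int_{\R^d}\mathcal K_q(y)\,\big(\dot{S}_{q-1}v(x-y)-\dot{S}_{q-1}v(x)\big)\cdot\nb \ddl u(x-y)\,dy,
\eeno
after integrating by parts to move $\nb$ off and noting the term where $\dot S_{q-1}v$ is not differenced cancels.

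The next step is the first-order Taylor estimate $|\dot S_{q-1}v(x-y)-\dot S_{q-1}v(x)|\le |y|\,\|\nb \dot S_{q-1}v\|_{L^\infty}$, which turns the integral into a convolution of $|\nb\ddl u|$ against $|y|\,|\mathcal K_q(y)|$. By the scaling of $\mathcal K_q$ one has $\||\cdot|\,\mathcal K_q\|_{L^1}= 2^{-q}\||\cdot|\,\mathcal K\|_{L^1}\lesssim 2^{-q}$. Then Young's inequality gives
\beno
\|[\Lm^{-1}, \dot{S}_{q-1}v\cdot\nb]\ddl u\|_{L^2}
\lesssim \|\nb \dot S_{q-1}v\|_{L^\infty}\;2^{-q}\,\|\nb \ddl u\|_{L^2}.
\eeno
Finally, since $\nb\ddl u$ is spectrally localized in the annulus of size $2^q$, Bernstein's inequality (Lemma \ref{Bernstein}) yields $\|\nb \ddl u\|_{L^2}\lesssim 2^q\|\ddl u\|_{L^2}$ — or, pushing one more factor of $\Lm^{-1}$, $2^{-q}\|\nb\ddl u\|_{L^2}\approx \|\Lm^{-1}\ddl u\|_{L^2}$ up to the spectral localization — which gives exactly the claimed bound $C\|\nb \dot S_{q-1}v\|_{L^\infty}\|\Lm^{-1}\ddl u\|_{L^2}$.

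The main obstacle, and the only point that needs genuine care rather than routine manipulation, is the first step: justifying that $\Lm^{-1}$ applied to an annulus-localized function is convolution against a well-behaved, properly scaled kernel $\mathcal K_q$ with the moment bound $\||\cdot|\mathcal K_q\|_{L^1}\lesssim 2^{-q}$. This is where one must insert the auxiliary cutoff $\tilde\varphi$ with $\tilde\varphi\equiv 1$ on $\mathrm{supp}\,\varphi$, write $\Lm^{-1}\ddl u = \big(\mathcal F^{-1}(|\xi|^{-1}\tilde\varphi(2^{-q}\xi))\big)*\ddl u$, and check by a scaling change of variables and the Schwartz-type decay of $\mathcal F^{-1}(|\xi|^{-1}\tilde\varphi)$ that the constant is uniform in $q$. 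Everything after that — the Taylor expansion, Young's inequality, and Bernstein — is standard.
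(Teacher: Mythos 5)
Your approach is exactly the same as the paper's: insert an auxiliary annulus cutoff $\tilde\varphi$ equal to $1$ on the relevant annulus, realize $\Lm^{-1}$ applied to functions spectrally supported there as convolution against the scaled kernel $\mathcal K_q$, expand the commutator as the integral of $\mathcal K_q(y)\bigl(\dot S_{q-1}v(x-y)-\dot S_{q-1}v(x)\bigr)\cdot\nabla\dot\Delta_q u(x-y)$, Taylor-expand $\dot S_{q-1}v$, and finish with Young's and Bernstein's inequalities. The idea is correct and the claimed bound follows.

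However, two of your intermediate exponents are off by $2^q$, and they happen to cancel, which is how you still reached the right conclusion. With $\mathcal K_q(x)=2^{(d-1)q}\mathcal K(2^qx)$ (so that $\widehat{\mathcal K_q}(\xi)=|\xi|^{-1}\tilde\varphi(2^{-q}\xi)$), a change of variables $z=2^qy$ gives
\beno
\||\cdot|\,\mathcal K_q\|_{L^1}=2^{-q+(d-1)q-dq}\||\cdot|\,\mathcal K\|_{L^1}=2^{-2q}\||\cdot|\,\mathcal K\|_{L^1},
\eeno
not $2^{-q}\||\cdot|\,\mathcal K\|_{L^1}$ as you wrote; this $2^{-2q}$ is precisely the prefactor appearing in the paper's computation. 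Correspondingly, your final step ``$2^{-q}\|\nabla\dot\Delta_q u\|_{L^2}\approx\|\Lm^{-1}\dot\Delta_q u\|_{L^2}$'' is false: Bernstein on the dyadic annulus gives $\|\nabla\dot\Delta_q u\|_{L^2}\approx 2^q\|\dot\Delta_q u\|_{L^2}$ and $\|\Lm^{-1}\dot\Delta_q u\|_{L^2}\approx 2^{-q}\|\dot\Delta_q u\|_{L^2}$, so the correct equivalence is $2^{-2q}\|\nabla\dot\Delta_q u\|_{L^2}\approx\|\Lm^{-1}\dot\Delta_q u\|_{L^2}$. You should fix both statements; as written, each line in isolation is wrong even though the two errors compensate. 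One further cosmetic point: the remark ``after integrating by parts to move $\nabla$ off'' is unnecessary and a bit misleading --- no integration by parts is involved; the integral identity follows directly from subtracting the two convolution expressions once $\Lm^{-1}$ has been represented as convolution by $\mathcal K_q$.
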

\begin{proof}
Noting first that the support of $\dot{S}_{q-1}v\cdot\nb \ddl u$ lies in the annulus $\tl{\mathcal{C}}:=B(0,\frac{2}{3})+\{\frac{3}{4}\leq|\xi|\leq\frac{8}{3}\}$ , we
choose a smooth function $\tl{\varphi}$ supported in an annulus and
with value 1 on a neighborhood of $\tl{\mathcal{C}}$. Denote $\bar{\dot{\Dl}}_{q}=\tl{\varphi}(2^{-q}\mathrm{D})$,
i.e.,
$(\bar{\dot{\Dl}}_{q}\phi)^{\hat{}}=\tl{\varphi}(2^{-q}\xi)\hat{\phi}(\xi)$,
for any $\phi\in \mathcal{S}$. Direct calculations yield
$\Lm^{-1}\bar{\dot{\Dl}}_{q}=2^{-q}(|\cdot|^{-1}\tl{\varphi})(2^{-q}\mathrm{D})$.
Thus
 \beqno
[\Lm^{-1}, \dot{S}_{q-1}v\cdot\nb]\ddl u&=&[\Lm^{-1}\bar{\dot{\Dl}}_{q}, \dot{S}_{q-1}v\cdot\nb]\ddl u\\
&=&[2^{-q}(|\cdot|^{-1}\tl{\varphi})(2^{-q}\mathrm{D}), \dot{S}_{q-1}v\cdot\nb]\ddl u\\
&=&2^{-q}\sum_{1\le k\le d}\int_{\mathbb{R}^d}2^{qd}\mathcal{F}^{-1}(|\cdot|^{-1}\tl{\varphi})(2^{q}y)\\
&&\times\left(\dot{S}_{q-1} v^k(x-y)-\dot{S}_{q-1}v^k(x)\right)\pr_k\ddl u(x-y)dy\\
&=&-2^{-2q}\sum_{1\le k\le d}\int_0^1\int_{\mathbb{R}^d}2^{qd}\mathcal{F}^{-1}(|\cdot|^{-1}\tl{\varphi})(2^{q}y)\\
&&\times(2^{q}y)\cdot \nb\dot{S}_{q-1} v^k(x-ty)\pr_k \ddl u(x-y)dydt.
 \eeqno
Consequently, using convolution inequality, we infer that
 \beqno
&&\|[\Lm^{-1}, \dot{S}_{q-1}v\cdot\nb]\ddl u\|_{L^2}\\
&\leq&C2^{-2q}\|\nb\dot{S}_{q-1}
v\|_{L^\infty}\|\nb\ddl u\|_{L^2}\|y\mathcal{F}^{-1}(|\cdot|^{-1}\tl{\varphi})\|_{L^1}\\
&\le&C\|\nb
\dot{S}_{q-1}v\|_{L^\infty}\|\Lm^{-1}\ddl u\|_{L^2},
 \eeqno
where we have used the fact that $\tilde{\varphi}$ is a smooth
function supported in an annulus, so is
$|\xi|^{-1}\tilde{\varphi}(\xi)$, and hence
$y\mathcal{F}^{-1}(|\cdot|^{-1}\tl{\varphi})$ is integrable. This completes the proof of Lemma \ref{lem-commu}.
\end{proof}

The study of non-stationary PDEs requires spaces of the type
$L^\rho_T(X)=L^\rho(0,T;X)$ for appropriate Banach spaces $X$. In
our case, we expect $X$ to be a  Besov space, so that it
is natural to localize the equations through Littlewood-Paley
decomposition. We then get estimates for each dyadic block and
perform integration in time. But, in doing so, we obtain the bounds
in spaces which are not of the type $L^\rho(0,T;\dot{B}^s_{p,r})$. That
 naturally leads to the following definition introduced by Chemin and Lerner in \cite{CL}.
\begin{defn}\label{defn-chemin-lerne}
For $\rho\in[1,+\infty]$, $s\in\R$, and $T\in(0,+\infty)$, we set
$$\|u\|_{\tilde{L}^\rho_T(\dot{B}^s_{p,r})}=\left\|2^{qs}
\|\dot{\Delta}_qu(t)\|_{L^\rho_T(L^p)}\right\|_{\ell^r}
$$
and denote by
$\tilde{L}^\rho_T(\dot{B}^s_{p,r})$ the subset of distributions
$u\in\mathcal{S}'([0,T]\times \mathbb{R}^N)$ with finite
$\|u\|_{\tilde{L}^\rho_T(\dot{B}^s_{p,r})}$ norm. When $T=+\infty$, the index $T$ is
omitted. We
further denote $\tilde{C}_T(\dot{B}^s_{p,r})=C([0,T];\dot{B}^s_{p,r})\cap
\tilde{L}^\infty_{T}(\dot{B}^s_{p,r}) $.
 \end{defn}
\begin{rem}\label{rem-CM-holder}
All the properties of continuity for the paraproduct, remainder, and product remain true for the Chemin-Lerner spaces. The exponent $\rho$ just has to behave according to H\"{o}lder's ineauality for the time variable.
\end{rem}

\begin{rem}\label{rem-CM-minkowski}
The spaces $\tl{L}^\rho_T(\dot{B}^s_{p,r})$ can be linked with the classical space $L^\rho_T(\dot{B}^s_{p,r})$ via the Minkowski inequality:
\beno
\|u\|_{\tl{L}^\rho_T(\dot{B}^s_{p,r})}\le\|u\|_{L^\rho_T(\dot{B}^s_{p,r})}\quad \mathrm{if}\quad r\ge\rho,\qquad \|u\|_{\tl{L}^\rho_T(\dot{B}^s_{p,r})}\ge\|u\|_{L^\rho_T(\dot{B}^s_{p,r})}\quad \mathrm{if}\quad r\le\rho.
\eeno
\end{rem}
\section{Linearized system}
\noindent We begin this section by giving the paralinearized version of system \eqref{COB2dimensionless}
\beq\label{piOB}
\begin{cases}
\pr_ta+\dv(\dot{T}_va)+\dv u=F,\\
\pr_t u+\dot{T}_v\cdot\nb u-\frac{1}{\Rey}\mathcal{A}u+\nb a-\frac{1}{\Rey}\dv\tau=G,\\
\pr_t\tau+\dot{T}_v\cdot\nb\tau+\frac{1}{\We}\tau-\frac{2\om}{\We}D( u)=L,
\end{cases}
\eeq
where $v, F, G$ and $L$ are some known functions. The purpose of this section is to establish the following property of system \eqref{piOB}.
\begin{prop}\label{prop-global}
Let $(a, u, \tau)$ be the solution to \eqref{piOB}. There exists a constant $C$ depending on $d, \Rey, \We$ and $\om$, such that for all $s\in\R$, we have
\beq\label{eq-prop}
\nn&&\|a(t)\|_{\dot{B}^{s-1,s}_{2,1}}+\|u(t)\|_{\dot{B}^{s-1}_{2,1}}+\|\tau(t)\|_{\dot{B}^{s}_{2,1}}\\
\nn&&+\|a\|_{L^1_t(\dot{B}^{s+1,s}_{2,1})}+\|u\|_{L^1_t(\dot{B}^{s+1}_{2,1})}+\|\tau\|_{L^1_t(\dot{B}^{s}_{2,1})}\\
\nn&\le&C\exp\left(C\|\nb v\|_{L^1_t(L^\infty)}\right)\left(\|a_0\|_{\dot{B}^{s-1,s}_{2,1}}+\|u_0\|_{\dot{B}^{s-1}_{2,1}}+\|\tau_0\|_{\dot{B}^{s}_{2,1}}\right.\\
&&\left.+\|F\|_{L^1(\dot{B}^{s-1,s}_{2,1})}+\|G\|_{L^1_t(\dot{B}^{s-1}_{2,1})}+\|L\|_{L^1_t(\dot{B}^{s}_{2,1})}\right).
\eeq
\end{prop}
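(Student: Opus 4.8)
The strategy is to localize the paralinearized system \eqref{piOB} in frequency via the operators $\ddl$, derive an ODE–type energy inequality for each block, and then recombine the blocks with the $\ell^1$ (in $q$) summation that defines the hybrid Besov norms. Because the linear part of \eqref{piOB} mixes the ``compressible'' behaviour of $(a,\Pe^\bot u,\Pe^\bot\dv\tau)$ (a hyperbolic–parabolic coupling of the Navier–Stokes type, perturbed by the elastic stress) with the ``incompressible'' behaviour of $(\Pe u,\Pe\dv\tau)$, I would split $u$ and $\tau$ through the Leray projector $\Pe$ exactly as in \eqref{COB-c}–\eqref{COB-i}. For the incompressible part the linear operator is the one already analyzed in \cite{Zi-Fang-Zhang14}, so its block estimate may be quoted; the new work is entirely in the compressible part.

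\textbf{The compressible block estimate.} For $q\le q_0$ I would work with $k_q=\|a_q\|_{L^2}+\|\Pe^\bot u_q\|_{L^2}+\|\Pe^\bot\dv\tau_q\|_{L^2}$, and for $q>q_0$ with $\nabla a_q$ replacing $a_q$. Testing the localized equations against $a_q$, $\Pe^\bot u_q$, $\Pe^\bot\dv\tau_q$ (and suitable cross terms such as $(\Lambda a_q\,|\,\Lambda^{-1}\Pe^\bot u_q)$, which is the device that produces decay of $a$ and of $\dv\tau$ out of the linear coupling terms $\nabla a$ and $\dv\tau$), one obtains, after absorbing the antisymmetric contributions, a differential inequality of the form
\[
\frac{d}{dt}\,\mathcal{L}_q + c\,\min(2^{2q},1)\,k_q \;\le\; C\|\nabla\dot S_{q-1}v\|_{L^\infty}k_q + \big(\|F_q\|_{L^2}+\|G_q\|_{L^2}+\|\dv L_q\|_{L^2}\big),
\]
where $\mathcal{L}_q$ is a Lyapunov functional equivalent to $k_q$. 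Here the Bony paraproduct structure of the convection terms is essential: writing $\dv(\dot T_v a)=\dot T_v\cdot\nabla a+(\dv\dot T_v)a$ and $\dot T_v\cdot\nabla u$, $\dot T_v\cdot\nabla\tau$, one commutes $\ddl$ past $\dot T_v\cdot\nabla$ and uses the standard commutator bound together with Lemma \ref{lem-commu} for the $\Lambda^{-1}$-weighted cross terms, so that all transport contributions are controlled by $\|\nabla v\|_{L^1_t(L^\infty)}k_q$ after summation. Dividing by $\mathcal{L}_q$, integrating in time, and invoking a Gronwall argument yields $k_q(t)+\min(2^{2q},1)\int_0^t k_q\,dt' \lesssim e^{C\|\nabla v\|_{L^1_t(L^\infty)}}\big(k_q(0)+\text{source}_q\big)$.

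\textbf{The frequency gap and the remedy.} The main obstacle, flagged by the author, is that the two one-sided estimates above only close for $q>q_0$ and for $q\le q_1$ with $q_1<q_0$, leaving the intermediate band $q_1<q\le q_0$ uncovered; this is where the compressible Oldroyd-B system genuinely differs from barotropic Navier–Stokes, because of the extra elastic variable. The fix is to run the whole block estimate on the modified quantity $\tl k_q=\|a_q\|_{L^2}+\|\Pe^\bot u_q\|_{L^2}+\|\Lambda^{-1}\Pe^\bot\dv\tau_q\|_{L^2}$ in that band: by Bernstein's inequality $\tl k_q\approx k_q$ for $q_1<q\le q_0$, and — crucially — applying $\Lambda^{-1}$ to the stress equation turns the problematic term into one with the right balance of derivatives, so the Lyapunov functional closes. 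The convection term then requires precisely the commutator estimate of Lemma \ref{lem-commu}. Patching the three frequency regimes (high $q>q_0$, intermediate $q_1<q\le q_0$, low $q\le q_1$) gives the compressible part.

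\textbf{Recombination.} Finally I multiply the block bounds by $2^{qs}$ (respectively $2^{q(s-1)}$ for the $\nabla a$-type blocks, which is consistent with the hybrid norm $\dot B^{s-1,s}_{2,1}$), sum over $q\in\mathbb Z$ using that the constants in the commutator and product estimates are summable, and add the incompressible-part contribution from \cite{Zi-Fang-Zhang14}. The $L^1_t(L^\infty)$ norm of $\nabla v$ comes out of the exponential exactly as written, the $L^\infty_t$ (tilde) norms of $(a,u,\tau)$ and the $L^1_t$ smoothing norms appear on the left, and the sources $F,G,L$ appear in $L^1_t$ of the corresponding spaces on the right, which is \eqref{eq-prop}. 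The one point demanding care in the recombination is that $\dv L$ rather than $L$ enters the block estimates for $\dv\tau$, but since the $\dv\tau$-norm carried is $\dot B^{s-1}_{2,1}$ while $\tau$ is measured in $\dot B^s_{2,1}$, this is consistent and $\|\dv L\|_{\dot B^{s-1}_{2,1}}\lesssim\|L\|_{\dot B^s_{2,1}}$ closes the bookkeeping.
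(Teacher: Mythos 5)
Your outline captures the core of the paper's argument: the Leray split into $(a,\Pe^\bot u,\Pe^\bot\dv\tau)$ and $(\Pe u,\Pe\dv\tau)$, the three-regime frequency analysis with the intermediate band $q_1<q\le q_0$ handled through the modified quantity involving $\Lambda^{-1}\Pe^\bot\dv\tau_q$, the use of Lemma \ref{lem-commu} to control the $\Lambda^{-1}$-weighted transport commutator, and the Lyapunov functional / Gronwall recombination. Up to this point your plan and the paper's proof are essentially the same; the only cosmetic difference is that you propose to quote the incompressible block estimate from \cite{Zi-Fang-Zhang14}, whereas the paper re-derives it by the same energy method (and in doing so obtains a proof that avoids the Green matrix of the cited work — see the remark after Theorem \ref{thm-g}).

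However, there is a genuine gap at the recombination stage. The cross-term energy method you describe closes an estimate for $(a,u,\dv\tau)$, not for $\tau$ itself: what comes out is \eqref{eq74}, i.e.\ control of $\|\dv\tau\|_{L^\infty_t(\dot B^{s-1}_{2,1})\cap L^1_t(\dot B^{s+1,s-1}_{2,1})}$. This does not give $\|\tau\|_{L^\infty_t(\dot B^s_{2,1})}$ or $\|\tau\|_{L^1_t(\dot B^s_{2,1})}$ — one cannot invert $\dv$ on dyadic blocks — yet both appear on the left of \eqref{eq-prop}. You must return to the $\tau$-equation itself and exploit the damping term $\tau/\We$: after localization and an $L^2$ energy estimate one gets
\[
\|\tau(t)\|_{\dot B^s_{2,1}}+\tfrac1{\We}\|\tau\|_{L^1_t(\dot B^s_{2,1})}\lesssim e^{C\|\nabla v\|_{L^1_t(L^\infty)}}\bigl(\|\tau_0\|_{\dot B^s_{2,1}}+\|u\|_{L^1_t(\dot B^{s+1}_{2,1})}+\|L\|_{L^1_t(\dot B^s_{2,1})}\bigr),
\]
which is then fed back into the bound coming from \eqref{eq74}. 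A second, related omission: the Lyapunov estimate in the high-frequency regime gives exponential decay of $Y_q$ at a fixed rate $c_1$ (see \eqref{es-CH}), so the time-integrated piece is only $\int_0^t Y_q\,dt'$ — this yields $\|u\|_{L^1_t(\dot B^{s-1}_{2,1})}$ in high frequency, not $\|u\|_{L^1_t(\dot B^{s+1}_{2,1})}$. The extra factor $2^{2q}$ must be recovered separately by testing the momentum equation alone against $u_q$ and using the Laplacian (the paper's ``smoothing effect of $u$'' step), precisely because the damping of $\tau$ argument above needs $\|u\|_{L^1_t(\dot B^{s+1}_{2,1})}$. Finally, your bookkeeping claim $\|\dv L\|_{\dot B^{s-1}_{2,1}}\lesssim\|L\|_{\dot B^s_{2,1}}$ is correct but incomplete: the source $H$ that enters the $\dv\tau$-estimates also contains the paraproduct term $\sum_{i,j}\dot T_{\pr_i v^j}\pr_j\tau^{i,k}$, which costs an extra $\int_0^t\|\nabla v\|_{L^\infty}\|\tau\|_{\dot B^s_{2,1}}\,dt'$ and therefore again requires the $\tau$-bound from the damping step before the Gronwall loop can be closed.
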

\begin{proof}
Before proceeding any further, let us  localize the system \eqref{piOB}. Similar to \eqref{COB-c} and \eqref{COB-i}, we find that $(a_q, \Pe^\bot u_q, \Pe^\bot\dv\tau_q)$ and $(\Pe u_q, \Pe\dv\tau_q)$ satisfy respectively
\beq\label{lcOBq}
\begin{cases}
\pr_ta_q+\dv(v_qa_q)+\dv\Pe^\bot u_q=f_q,\\
\pr_t\Pe^\bot u_q+v_q\cdot\nb\Pe^\bot u_q-\frac{2(1-\om)}{\Rey}\Dl\Pe^\bot u_q+\nb a_q-\frac{1}{\Rey}\Pe^\bot\dv\tau_q=g_q^\sharp,\\
\pr_t\Pe^\bot\dv\tau_q+v_q\cdot\nb\Pe^\bot\dv\tau_q+\frac{1}{\We}\Pe^\bot\dv\tau_q-\frac{2\om}{\We}\Dl\Pe^\bot u_q=h_q^\sharp,
\end{cases}
\eeq
and
\beq\label{IiOBq}
\begin{cases}
\pr_t\Pe u_q+v_q\cdot\nb\Pe u_q-\frac{1-\om}{\Rey}\Dl\Pe u_q-\frac{1}{\Rey}\Pe\dv\tau_q=g_q^\flat,\\
\pr_t\Pe\dv\tau_q+v_q\cdot\nb\Pe\dv\tau_q+\frac{1}{\We}\Pe\dv\tau_q-\frac{\om}{\We}\Dl\Pe u_q=h_q^\flat,
\end{cases}
\eeq
where $v_q:=\dot{S}_{q-1}v$,
\begin{gather*}
f_q:=F_q+\dv(v_qa_q-\ddl\dot{T}_va),\\
g_q^\sharp:=\Pe^\bot G_q+\left(v_q\cdot\nb\ddl\Pe^\bot u-\Pe^\bot
\ddl\dot{T}_v\cdot\nb u\right),\\
h_q^\sharp:=\Pe^\bot H_q+\left(v_q\cdot\nb\ddl\Pe^\bot \dv\tau-\Pe^\bot
\ddl\dot{T}_v\cdot\nb \dv\tau\right),\\
g_q^\flat:=\Pe G_q+\left(v_q\cdot\nb\ddl\Pe u-\Pe
\ddl\dot{T}_v\cdot\nb u\right),\\
h_q^\flat:=\Pe H_q+\left(v_q\cdot\nb\ddl\Pe \dv\tau-\Pe
\ddl\dot{T}_v\cdot\nb \dv\tau\right),\\
\end{gather*}
with $H^k:=(\dv L)^k-\sum_{1\le i, j\le d}\dot{T}_{\pr_iv^j}\pr_j\tau^{i,k}$.

\noindent{\bf The Compressible Part}\par
\noindent Since the parabolic-hyperbolic system \eqref{lcOBq} behaves differently in high and low frequency, we have to deal with the high and low frequency part of \eqref{lcOBq}  in different ways. To simplify the presentation, in the following we give all the estimates which are needed both for high and low frequency cases. To begin with, taking the $L^2$ inner product of $\eqref{lcOBq}_1$ with $a_q$ and $\Dl a_q$ respectively, integrating by parts, we obtain
\be\label{la}
\frac12\frac{d}{dt}\|a_q\|_{L^2}^2+(\dv\Pe^\bot u_q|a_q)=(f_q|a_q)-\frac12\int\dv v_q|a_q|^2,
\ee
and
\be\label{ha}
\fr12\fr{d}{dt}\|\nb a_q\|_{L^2}^2+(\Dl\Pe^\bot u_q|\nb a_q)=(\nb f_q|\nb a_q)+\int a_q\pr_{ij}a_q\pr_i v_q^j-\fr12\int\dv v_q|\nb a_q|^2.
\ee
Next, taking the $L^2$ inner product of $\eqref{lcOBq}_2$ with $\Pe^\bot u_q$ and of  $\eqref{lcOBq}_3$ with $\Pe^\bot \dv\tau_q$ yields
\beq\label{Cu}
\nn\frac12\fr{d}{dt}\|\Pe^\bot u_q\|_{L^2}^2+\fr{2(1-\om)}{\Rey}\|\nb\Pe^\bot u_q\|_{L^2}^2+(\Pe^\bot u_q|\nb a_q)&-&\fr{1}{\Rey}(\Pe^\bot u_q|\Pe^\bot\dv\tau_q)\\
&=&(g_q^\sharp|\Pe^\bot u_q)+\fr12\int\dv v_q|\Pe^\bot u_q|^2,
\eeq
and
\beq\label{ctau}
\nn\fr12\fr{d}{dt}\|\Pe^\bot \dv\tau_q\|_{L^2}^2+\fr{1}{\We}\|\Pe^\bot\dv\tau_q\|^2_{L^2}&-&\fr{2\om}{\We}(\Dl\Pe^\bot u_q|\Pe^\bot\dv\tau_q)\\
&=&(h_q^\sharp|\Pe^\bot\dv\tau_q)+\fr12\int\dv v_q|\Pe^\bot\dv\tau_q|^2.
\eeq
In order to deal with the low frequency part, we also need to give the $L^2$ estimate of $\Lm^{-1}\Pe^\bot \dv\tau_q$. To do so, applying first the operator $\Lm^{-1}$ to third equation of \eqref{lcOBq}, and then taking the $L^2$ inner product of the resulting equation with $\Lm^{-1}\Pe^\bot\dv\tau_q$, we are let to
\beq\label{c-1tau}
\nn\fr12\fr{d}{dt}\|\Lm^{-1}\Pe^\bot\dv\tau_q\|_{L^2}^2&+&\fr{1}{\We}\|\Lm^{-1}\Pe^\bot\dv\tau_q\|^2_{L^2}+\fr{2\om}{\We}(\Pe^\bot u_q|\Pe^\bot\dv\tau_q)\\
&=&(\Lm^{-1}h_q^\sharp|\Lm^{-1}\Pe^\bot\dv\tau_q)-(\Lm^{-1}(v_q\cdot\nb\Pe^\bot\dv\tau_q)|\Lm^{-1}\Pe^{\bot}\dv\tau_q).
\eeq
Owing to the fact that the linear operator associated with \eqref{lcOBq} can not be diagonalized in a basis independent of $\xi$,  coercive estimates can not be achieved by means  of a linear combination of \eqref{la}--\eqref{c-1tau}. We must make full use the linear coupling terms in \eqref{lcOBq}. Accordingly,  the following three equalities of cross terms are given. More precisely, first of all, applying $\nb$ to the first equation of \eqref{lcOBq}, taking the $L^2$ inner product of the resulting equation with $\Pe^\bot u_q$. Secondly, taking the $L^2$ inner product of the second equation of \eqref{lcOBq} with $\nb a_q$. Summing up these two results, we find that
\beq\label{a-u}
\nn\fr{d}{dt}(\nb a_q|\Pe^\bot u_q)-\|\nb \Pe^\bot u_q\|_{L^2}^2+\|\nb a_q\|_{L^2}^2&-&\fr{2(1-\om)}{\Rey}(\Dl\Pe^\bot u_q|\nb a_q)-\fr{1}{\Rey}(\nb a_q|\Pe^\bot\dv\tau_q)\\
&=&(\nb f_q|\Pe^\bot u_q)+(g_q^\sharp|\nb a_q)+\int a_q\pr_jv_q^i\pr_i\Pe^\bot u_q^i.
\eeq
Similarly, we get the equalities involving the quantity $(\Pe^\bot u_q|\Pe^\bot\dv\tau_q)$ and $(\Pe^\bot u_q|\Dl \Pe^\bot \dv\tau_q)$,
\beq\label{cu-tau}
\nn&&\fr{d}{dt}(\Pe^\bot u_q|\Pe^\bot\dv\tau_q)+\fr{2\om}{\We}\|\nb\Pe^\bot u_q\|_{L^2}^2-\fr{1}{\Rey}\|\Pe^\bot\dv\tau_q\|_{L^2}^2\\
\nn&&-\fr{2(1-\om)}{\Rey}(\Dl\Pe^\bot u_q|\Pe^\bot\dv\tau_q)+(\nb a_q|\Pe^\bot\dv\tau_q)+\fr{1}{\We}(\Pe^\bot u_q|\Pe^\bot\dv\tau_q)\\
&=&(g_q^\sharp|\Pe^\bot\dv\tau_q)+(h_q^\sharp|\Pe^\bot u_q)+\int\dv v_q\Pe^\bot u_q\cdot\Pe^\bot\dv\tau_q,
\eeq
and
\beq\label{cu-Dltau}
&&\nn\fr{d}{dt}(\Pe^\bot u_q|\Dl \Pe^\bot \dv\tau_q)+\fr{1}{\Rey}\|\nb\Pe^\bot\dv\tau_q\|_{L^2}^2-\fr{2\om}{\We}\|\Dl\Pe^\bot u_q\|_{L^2}^2\\
&&\nn-\fr{2(1-\om)}{\Rey}(\Dl\Pe^\bot u_q|\Dl\Pe^\bot\dv\tau_q)+(\nb a_q|\Dl\Pe^\bot\dv\tau_q)+\fr{1}{\We}(\Dl \Pe^\bot u_q|\Pe^\bot\dv\tau_q)\\
&=&(g_q^\sharp|\Dl\Pe^\bot\dv\tau_q)+(h_q^\sharp|\Dl\Pe^\bot u_q)-(v_q\cdot\nb\Pe^\bot u_q|\Dl\Pe^\bot\dv\tau_q)-(v_q\cdot\nb\Pe^\bot\dv\tau_q|\Dl\Pe^\bot u_q).
\eeq
Now let us define
\beqno
Y_q&:=&\left[M\|\Pe^\bot u_q\|_{L^2}^2+2\Rey\We\left\|\fr{1-\om}{\Rey}\nb a_q\right\|^2_{L^2}+\left\|\fr{(1-\om)\We}{\om\Rey}\Pe^\bot\dv\tau_q\right\|^2_{L^2}\right.\\
&&\left.+2\Rey\We\left(\fr{1-\om}{\Rey}\nb a_q|\Pe^\bot u_q\right)-2\left(\Pe^\bot u_q|\fr{(1-\om)\We}{\om\Rey}\Pe^\bot\dv\tau_q\right)\right]^{\fr12},\quad \mathrm{for}\ q>q_0,
\eeqno
\beqno
Y_q&:=&\left[\|a_q\|_{L^2}^2+\|\Pe^\bot u_q\|_{L^2}^2+M'\|\fr{\We}{\Rey}\Pe^\bot\dv\tau_q\|_{L^2}^2+\fr{2}{\Rey}(\nb a_q|\Pe^\bot u_q)\right.\\
&&\left.+2(\Pe^\bot u_q|\fr{\We}{\Rey}\Pe^\bot\dv\tau_q)+2(\Pe^\bot u_q|\fr{\We}{\Rey}\Dl\Pe^\bot\dv\tau_q)\right]^{\fr12}, \quad \mathrm{for}\ q\le q_1,
\eeqno
and
\beqno
Y_q&:=&\left[\|a_q\|_{L^2}^2+\|\Pe^\bot u_q\|_{L^2}^2+\fr{\We}{2\om\Rey}\|\Lm^{-1}\Pe^\bot\dv\tau_q\|_{L^2}^2\right.\\
&&\left.+\fr{2\alpha(1-\om)}{\Rey}(\nb a_q|\Pe^\bot u_q)-\fr{\beta(1-\om)\We}{\om\Rey}(\Pe^\bot u_q|\Pe^\bot\dv\tau_q)\right]^{\fr12}, \quad \mathrm{for}\ q_1<q\le q_0,
\eeqno
where $q_0, q_1, M, M', \alpha$ and $\beta$ will be determined later. Next, we will estimate $Y_q$ for all $q\in \Z$ step by step.

\noindent{\em Step 1: high frequencies.}\par
\noindent Multiplying \eqref{ha} and \eqref{a-u} by $2\left(\fr{1-\om}{\Rey}\right)^2$ and $\fr{1-\om}{\Rey}$ respectively, summing up the resulting equations yields
\beq\label{eq-1}
\nn&&\fr12\fr{d}{dt}\left(2\left\|\fr{1-\om}{\Rey}\nb a_q\right\|^2+2\left(\fr{1-\om}{\Rey}\nb a_q|\Pe^\bot u_q\right)\right)\\
\nn&&-\fr{1-\om}{\Rey}\|\nb \Pe^\bot u_q\|_{L^2}^2+\fr{1-\om}{\Rey}\|\nb a_q\|_{L^2}^2-\fr{1}{\Rey}\left(\fr{1-\om}{\Rey}\nb a_q|\Pe^\bot\dv\tau_q\right)\\
&=&2\left(\fr{1-\om}{\Rey}\right)^2 {R.H.S.\ of\ } \eqref{ha}+\fr{1-\om}{\Rey}{R.H.S.\ of\ } \eqref{a-u}.
\eeq
\noindent Multiplying \eqref{ctau} and \eqref{cu-tau} by $\left(\fr{(1-\om)\We}{\om\Rey}\right)^2$ and $-\fr{(1-\om)\We}{\om\Rey}$ respectively, summing up the resulting equations, we are led to
\beq\label{eq-2}
\nn&&\fr12\fr{d}{dt}\left(\left\|\fr{(1-\om)\We}{\om\Rey}\Pe^\bot\dv\tau_q\right\|_{L^2}^2-2\left(\Pe^\bot u_q|\fr{(1-\om)\We}{\om\Rey}\Pe^\bot\dv\tau_q\right)\right)\\
\nn&&+\fr{1}{(1-\om)\We}\left\|\fr{(1-\om)\We}{\om\Rey}\Pe^\bot\dv\tau_q\right\|^2_{L^2}-\fr{2(1-\om)}{\Rey}\|\nb\Pe^\bot u_q\|_{L^2}^2\\
\nn&&-\left(\nb a_q|\fr{(1-\om)\We}{\om\Rey}\Pe^\bot\dv\tau_q\right)-\fr{1}{\We}\left(\Pe^\bot u_q|\fr{(1-\om)\We}{\om\Rey}\Pe^\bot\dv\tau_q\right)\\
&=&\left(\fr{(1-\om)\We}{\om\Rey}\right)^2 {R.H.S.\ of\ } \eqref{ctau}-\fr{(1-\om)\We}{\om\Rey}{R.H.S.\ of\ } \eqref{cu-tau}.
\eeq
Multiplying \eqref{Cu} and \eqref{eq-1} by $M$ and $\Rey\We$ respectively, adding them to \eqref{eq-2}, we obtain
\beq\label{eq-3}
\nn&&\fr12\fr{d}{dt}\left[M\|\Pe^\bot u_q\|_{L^2}^2+2\Rey\We\left\|\fr{1-\om}{\Rey}\nb a_q\right\|^2_{L^2}+\left\|\fr{(1-\om)\We}{\om\Rey}\Pe^\bot\dv\tau_q\right\|^2_{L^2}\right.\\
\nn&&\left.+2\Rey\We\left(\fr{1-\om}{\Rey}\nb a_q|\Pe^\bot u_q\right)-2\left(\Pe^\bot u_q|\fr{(1-\om)\We}{\om\Rey}\Pe^\bot\dv\tau_q\right)\right]\\
\nn&&-\fr{1}{\We}\left(\Pe^\bot u_q|\fr{(1-\om)\We}{\om\Rey}\Pe^\bot\dv\tau_q\right)+(2M-2-\Rey\We)\fr{1-\om}{\Rey}\|\nb\Pe^\bot u_q\|_{L^2}^2\\
\nn&&+(1-\om)\We\|\nb a_q\|_{L^2}^2+\fr{1}{(1-\om)\We}\left\|\fr{(1-\om)\We}{\om\Rey}\Pe^\bot\dv\tau_q\right\|^2_{L^2}\\
\nn&&-(1+\om)\left(\nb a_q|\fr{(1-\om)\We}{\om\Rey}\Pe^\bot\dv\tau_q\right)+M(\Pe^\bot u_q|\nb a_q)-\fr{M}{\Rey}(\Pe^\bot u_q|\Pe^\bot\dv\tau_q)\\
&=&M\, {R.H.S.\ of\ } \eqref{Cu}+\Rey\We{R.H.S.\ of\ } \eqref{eq-1}+{R.H.S.\ of\ } \eqref{eq-2}.
\eeq
By using Cauchy-Schwarz inequality, we have
\beno
2\Rey\We\left|\left(\fr{1-\om}{\Rey}\nb a_q|\Pe^\bot u_q\right)\right|\le \Rey\We\left(\left\|\fr{1-\om}{\Rey}\nb a_q\right\|^2_{L^2}+\|\Pe^\bot u_q\|_{L^2}^2\right),
\eeno
and
\beno
2\left|\left(\Pe^\bot u_q|\fr{(1-\om)\We}{\om\Rey}\Pe^\bot\dv\tau_q\right)\right|\le\fr32\|\Pe^\bot u_q\|_{L^2}^2+\fr23\left\|\fr{(1-\om)\We}{\om\Rey}\Pe^\bot\tau_q\right\|_{L^2}^2.
\eeno
It follows that
\beq\label{eq-4}
\nn&&M\|\Pe^\bot u_q\|_{L^2}^2+2\Rey\We\left\|\fr{1-\om}{\Rey}\nb a_q\right\|^2_{L^2}+\left\|\fr{(1-\om)\We}{\om\Rey}\Pe^\bot\dv\tau_q\right\|^2_{L^2}\\
\nn&&+2\Rey\We\left(\fr{1-\om}{\Rey}\nb a_q|\Pe^\bot u_q\right)-2\left(\Pe^\bot u_q|\fr{(1-\om)\We}{\om\Rey}\Pe^\bot\dv\tau_q\right)\\
&\ge&(M-\Rey\We-\fr32)\|\Pe^\bot u_q\|_{L^2}^2+\Rey\We\left\|\fr{1-\om}{\Rey}\nb a_q\right\|^2_{L^2}+\fr13\left\|\fr{(1-\om)\We}{\om\Rey}\Pe^\bot\dv\tau_q\right\|^2_{L^2}.
\eeq
Moreover,
\beno
(1+\om)\left(\nb a_q|\fr{(1-\om)\We}{\om\Rey}\Pe^\bot\dv\tau_q\right)\le \fr{1+\om}{2}\left((1-\om)\We\|\nb a_q\|_{L^2}^2+\fr{1}{(1-\om)\We}\left\|\fr{(1-\om)\We}{\om\Rey}\Pe^\bot\dv\tau_q\right\|^2_{L^2}\right),
\eeno
and hence
\beq\label{eq-5}
\nn&&(1-\om)\We\|\nb a_q\|_{L^2}^2+\fr{1}{(1-\om)\We}\left\|\fr{(1-\om)\We}{\om\Rey}\Pe^\bot\dv\tau_q\right\|^2_{L^2}\\
\nn&&-(1+\om)\left(\nb a_q|\fr{(1-\om)\We}{\om\Rey}\Pe^\bot\dv\tau_q\right)\\
\nn&\ge&\frac{(1-\om)}{2}\left((1-\om)\We\|\nb a_q\|_{L^2}^2+\fr{1}{(1-\om)\We}\left\|\fr{(1-\om)\We}{\om\Rey}\Pe^\bot\dv\tau_q\right\|^2_{L^2}\right)\\
&=&\fr{(1-\om)^2\We}{2}\|\nb a_q\|_{L^2}^2+\fr{1}{2\We}\left\|\fr{(1-\om)\We}{\om\Rey}\Pe^\bot\dv\tau_q\right\|^2_{L^2}.
\eeq
The rest terms on the left hand side of \eqref{eq-3} can be bounded as follows.
\beq\label{eq-6}
M|(\Pe^\bot u_q|\nb a_q)|\nn&\le&\fr14(1-\om)^2\We\|\nb a_q\|_{L^2}^2+\fr{M^2}{(1-\om)^2\We}\|\Pe^\bot u_q\|_{L^2}^2\\
&\le&\fr14(1-\om)^2\We\|\nb a_q\|_{L^2}^2+\left(\fr432^{-q_0}\right)^2\fr{M^2}{(1-\om)^2\We}\|\nb\Pe^\bot u_q\|_{L^2}^2,
\eeq
and
\beq\label{eq-7}
\nn&&\left|\fr{M}{\Rey}(\Pe^\bot u_q|\Pe^\bot\dv\tau_q)
+\fr{1}{\We}\left(\Pe^\bot u_q|\fr{(1-\om)\We}{\om\Rey}\Pe^\bot\dv\tau_q\right)\right|\\
\nn&\le&\fr{M}{(1-\om)\We}\left\|\fr{(1-\om)\We}{\om\Rey}\Pe^\bot\dv\tau_q\right\|_{L^2}\|\Pe^\bot u_q\|_{L^2}\\
\nn&\le&\fr14\fr{1}{\We}\left\|\fr{(1-\om)\We}{\om\Rey}\Pe^\bot\dv\tau_q\right\|_{L^2}^2+\fr{M^2}{(1-\om)^2\We}\|\Pe^\bot u_q\|_{L^2}^2\\
&\le&\fr14\fr{1}{\We}\left\|\fr{(1-\om)\We}{\om\Rey}\Pe^\bot\dv\tau_q\right\|_{L^2}^2+\left(\fr432^{-q_0}\right)^2\fr{M^2}{(1-\om)^2\We}\|\nb\Pe^\bot u_q\|_{L^2}^2.
\eeq
From \eqref{eq-5}--\eqref{eq-7}, we arrive at
\beq\label{eq-8}
\nn&&(2M-2-\Rey\We)\fr{1-\om}{\Rey}\|\nb\Pe^\bot u_q\|_{L^2}^2+(1-\om)\We\|\nb a_q\|_{L^2}^2\\
\nn&&+\fr{1}{(1-\om)\We}\left\|\fr{(1-\om)\We}{\om\Rey}\Pe^\bot\dv\tau_q\right\|^2_{L^2}
-(1+\om)\left(\nb a_q|\fr{(1-\om)\We}{\om\Rey}\Pe^\bot\dv\tau_q\right)\\
\nn&&+M(\Pe^\bot u_q|\nb a_q)-\fr{M}{\Rey}(\Pe^\bot u_q|\Pe^\bot\dv\tau_q)
-\fr{1}{\We}\left(\Pe^\bot u_q|\fr{(1-\om)\We}{\om\Rey}\Pe^\bot\dv\tau_q\right)\\
\nn&\ge&\left((2M-2-\Rey\We)\fr{1-\om}{\Rey}-\left(\fr432^{-q_0}\right)^2\fr{2M^2}{(1-\om)^2\We}\right)\|\nb\Pe^\bot u_q\|_{L^2}^2\\
&&+\fr14(1-\om)^2\We\|\nb a_q\|_{L^2}^2+\fr{1}{4\We}\left\|\fr{(1-\om)\We}{\om\Rey}\Pe^\bot\dv\tau_q\right\|^2_{L^2}.
\eeq
Taking $M=\Rey\We+2$, and
\be \label{eq-8+}
2^{q_0}\ge \left(\fr43\right)^{\fr32}\sqrt{\fr{2\Rey(\Rey\We+2)}{(1-\om)^3\We}},
\ee
then it is easy to verify that
\be
M-\Rey\We-\fr32\ge\fr12, \quad\mathrm{and}\quad (2M-2-\Rey\We)\fr{1-\om}{\Rey}-\left(\fr432^{-q_0}\right)^2\fr{2M^2}{(1-\om)^2\We}\ge\fr{\Rey\We+2}{4}\fr{1-\om}{\Rey}.
\ee
Next, we estimate the right hand side of \eqref{eq-3}. Indeed, using Cauchy-Schwarz and Bernstein's inequality, we have
\beq\label{RHS-CH}
\nn&& R.H.S. of \eqref{eq-3}\\
\nn&\le&C\left(\|\nb f_q\|_{L^2}+\|g_q^\sharp\|_{L^2}+\|h_q^\sharp\|_{L^2}\right)\left(\|\Pe^\bot u_q\|_{L^2}+\left\|\fr{1-\om}{\Rey}\nb a_q\right\|_{L^2}+\left\|\fr{(1-\om)\We}{\om\Rey}\Pe^\bot\dv\tau_q\right\|_{L^2}\right)\\
&&+C\|\nb v\|_{L^\infty}\left(\|\Pe^\bot u_q\|_{L^2}+\left\|\fr{1-\om}{\Rey}\nb a_q\right\|_{L^2}+\left\|\fr{(1-\om)\We}{\om\Rey}\Pe^\bot\dv\tau_q\right\|_{L^2}\right)^2.
\eeq
In view of commutator estimates, cf. \cite{Bahouri-Chemin-Danchin11}, we infer that
\beq\label{com-CH}
\nn&&\|\nb f_q\|_{L^2}+\|g_q^\sharp\|_{L^2}+\|h_q^\sharp\|_{L^2}\\
&\le& \|\nb F_q\|_{L^2}+\|\Pe^\bot G_q\|_{L^2}+\|\Pe^\bot H_q\|_{L^2}+ C\|\nb v\|_{L^\infty}\sum_{|q'-q|\le4}\left(\|\nb a_{q'}\|_{L^2}+\| u_{q'}\|_{L^2}+\|\dv\tau_{q'}\|_{L^2}\right).
\eeq
Now from \eqref{eq-4}, \eqref{eq-8}--\eqref{com-CH}, we find that there exist two constants $c_1$ and $C$ depending on $d, \Rey, \We$ and $\om$, such that
 if $q> q_0$,  then \eqref{eq-3} implies that
\beq\label{es-CH}
\nn\fr{d}{dt}Y_q+c_1Y_q
&\le& C\Bigg(\|\nb F_q\|_{L^2}+\|\Pe^\bot G_q\|_{L^2}+\|\Pe^\bot H_q\|_{L^2}\\
&&+\|\nb v\|_{L^\infty}\sum_{|q'-q|\le4}\left(\|\nb a_{q'}\|_{L^2}+\| u_{q'}\|_{L^2}+\|\dv\tau_{q'}\|_{L^2}\right)\Bigg).
\eeq
{\em Step 2: low frequencies.}\par
Part {\bf(i)}. $q\le q_1$. Multiplying \eqref{a-u} and \eqref{cu-tau} by $\fr{1}{\Rey}$ and $\fr{\We}{\Rey}$ respectively, and then adding them to
\eqref{la} and \eqref{Cu}, we get
\beq\label{eq-9}
\nn&&\fr12\fr{d}{dt}\left[\|a_q\|_{L^2}^2+\|\Pe^\bot u_q\|_{L^2}^2+\fr{2}{\Rey}(\nb a_q|\Pe^\bot u_q)+\fr{2\We}{\Rey}(\Pe^\bot u_q|\Pe^\bot\dv\tau_q)\right]\\
\nn&&+\fr{1}{\Rey}\|\nb\Pe^\bot u_q\|_{L^2}^2+\fr{1}{\Rey}\|\nb a_q\|_{L^2}^2-\fr{\We}{(\Rey)^2}\|\Pe^\bot\dv\tau_q\|_{L^2}^2-\fr{2(1-\om)}{(\Rey)^2}(\Dl\Pe^\bot u_q|\nb a_q)\\
\nn&&-\fr{2(1-\om)\We}{(\Rey)^2}(\Dl\Pe^\bot u_q|\Pe^\bot\dv\tau_q)-\fr{1}{(\Rey)^2}(\nb a_q|\Pe^\bot\dv\tau_q)+\fr{\We}{\Rey}(\nb a_q|\Pe^\bot\dv\tau_q)\\
&=&R.H.S. \ of\ \eqref{la} \ and\  \eqref{Cu}+\fr{1}{\Rey}R.H.S. \ of\ \eqref{a-u}+\fr{\We}{\Rey}R.H.S. \ of\ \eqref{cu-tau}.
\eeq
Multiplying \eqref{ctau} and \eqref{cu-Dltau} by $M'\left(\fr{\We}{\Rey}\right)^2$ and $\fr{\We}{\Rey}$ respectively, adding them to the above inequality yields
\beq\label{eq-10}
\nn&&\fr12\fr{d}{dt}\left[\|a_q\|_{L^2}^2+\|\Pe^\bot u_q\|_{L^2}^2+M'\|\fr{\We}{\Rey}\Pe^\bot\dv\tau_q\|_{L^2}^2+\fr{2}{\Rey}(\nb a_q|\Pe^\bot u_q)\right.\\
\nn&&\left.+2(\Pe^\bot u_q|\fr{\We}{\Rey}\Pe^\bot\dv\tau_q)+2(\Pe^\bot u_q|\fr{\We}{\Rey}\Dl\Pe^\bot\dv\tau_q)\right]+\fr{1}{\Rey}\|\nb\Pe^\bot u_q\|_{L^2}^2\\
\nn&&-\fr{2\om}{\Rey}\|\Dl\Pe^\bot u_q\|_{L^2}^2+\fr{1}{\Rey}\|\nb a_q\|_{L^2}^2+\fr{M'-1}{\We}\|\fr{\We}{\Rey}\Pe^\bot\dv\tau_q\|_{L^2}^2\\
\nn&&+\fr{1}{\We}\|\fr{\We}{\Rey}\nb\Pe^\bot\dv\tau_q\|^2_{L^2}-\fr{2(1-\om)}{(\Rey)^2}(\Dl\Pe^\bot u_q|\nb a_q)+(1-\fr{1}{\Rey\We})(\nb a_q|\fr{\We}{\Rey}\Pe^\bot\dv\tau_q)\\
\nn&&+\left(\fr{1}{\We}-\fr{2(1-\om)+2M'\om}{\Rey}\right)(\Dl\Pe^\bot u_q|\fr{\We}{\Rey}\Pe^\bot\dv\tau_q)\\
\nn&&-\fr{2(1-\om)}{\Rey}(\Dl\Pe^\bot u_q|\fr{\We}{\Rey}\Dl\Pe^\bot\dv\tau_q)+(\nb a_q|\fr{\We}{\Rey}\Dl\Pe^\bot\dv\tau_q)\\
&=&R.H.S. \ of\ \eqref{eq-9}+M'\left(\fr{\We}{\Rey}\right)^2R.H.S. \ of\ \eqref{ctau}+\fr{\We}{\Rey}R.H.S. \ of\ \eqref{cu-Dltau}.
\eeq
Now we estimate the cross terms on the left hand side of \eqref{eq-10}.
\beq\label{eq-11}
\nn\fr{2}{\Rey}|(\nb a_q|\Pe^\bot u_q)|&\le&\fr{8}{3}2^{q_1}\fr{1}{\Rey}\left(\|a_q\|_{L^2}^2+\|\Pe^\bot u_q\|_{L^2}^2\right)\\
&\le&\fr14\left(\|a_q\|_{L^2}^2+\|\Pe^\bot u_q\|_{L^2}^2\right),
\eeq
provided
\be\label{eq-12}
2^{q_1}\le \fr{3}{32}\Rey.
\ee
\be\label{eq-13}
2\left|(\Pe^\bot u_q|\fr{\We}{\Rey}\Pe^\bot\dv\tau_q)\right|\le \fr14\|\Pe^\bot u_q\|_{L^2}^2+4\left\|\fr{\We}{\Rey}\Pe^\bot\dv\tau_q\right\|_{L^2}^2.
\ee

\beq\label{eq-14}
\nn2\left|(\Pe^\bot u_q|\fr{\We}{\Rey}\Dl\Pe^\bot\dv\tau_q)\right|&\le&\left(\fr{8}{3}2^{q_1}\right)^2\left(\|\Pe^\bot u_q\|_{L^2}^2+\left\|\fr{\We}{\Rey}\Pe^\bot\dv\tau_q\right\|_{L^2}^2\right)\\
&\le&\fr14\left(\|\Pe^\bot u_q\|_{L^2}^2+\left\|\fr{\We}{\Rey}\Pe^\bot\dv\tau_q\right\|_{L^2}^2\right),
\eeq
provided
\be\label{eq-15}
2^{q_1}\le \fr{3}{16}.
\ee
From \eqref{eq-11}--\eqref{eq-15}, it is easy to see that
\beq\label{eq-16}
\nn&&\|a_q\|_{L^2}^2+\|\Pe^\bot u_q\|_{L^2}^2+M'\|\fr{\We}{\Rey}\Pe^\bot\dv\tau_q\|_{L^2}^2+\fr{2}{\Rey}(\nb a_q|\Pe^\bot u_q)\\
\nn&&+2(\Pe^\bot u_q|\fr{\We}{\Rey}\Pe^\bot\dv\tau_q)+2(\Pe^\bot u_q|\fr{\We}{\Rey}\Dl\Pe^\bot\dv\tau_q)\\
&\ge&\fr34\|a_q\|_{L^2}^2+\fr14\|\Pe^\bot u_q\|_{L^2}^2+(M'-\fr{17}{4})\left\|\fr{\We}{\Rey}\Pe^\bot\dv\tau_q\right\|_{L^2}^2.
\eeq
Using Cauchy-Schwarz inequality and Lemma \ref{Bernstein} over and over again, the rest cross terms on the left hand side of \eqref{eq-10} can be bounded in a similar way.  In fact,
\beq\label{eq-17}
\nn\fr{2(1-\om)}{(\Rey)^2}|(\Dl\Pe^\bot u_q|\nb a_q)|&\le&\fr{2(1-\om)}{(\Rey)^2}2^{q_1}\fr{8}{3}\|\nb\Pe^\bot u_q\|_{L^2}\|\nb a_q\|_{L^2}\\
\nn&\le&\fr{8}{3\Rey}2^{q_1}\left(\fr{1}{\Rey}\|\nb\Pe^\bot u_q\|_{L^2}^2+\fr{1}{\Rey}\|\nb a_q\|_{L^2}^2\right)\\
&\le&\fr14\left(\fr{1}{\Rey}\|\nb\Pe^\bot u_q\|_{L^2}^2+\fr{1}{\Rey}\|\nb a_q\|_{L^2}^2\right),
\eeq
provided
\be\label{eq-18}
2^{q_1}\le\fr{3}{32}\Rey.
\ee

\be\label{eq-19}
\left|(1-\fr{1}{\Rey\We})(\nb a_q|\fr{\We}{\Rey}\Pe^\bot\dv\tau_q)\right|\le\fr14\fr{1}{\Rey}\|\nb a_q\|_{L^2}^2+\fr{(\Rey\We-1)^2}{\Rey\We}\fr{1}{\We}\left\|\fr{\We}{\Rey}\Pe^\bot\dv\tau_q\right\|_{L^2}^2.
\ee

\beq\label{eq-20}
\nn&&\left|\left(\fr{1}{\We}-\fr{2(1-\om)+2M'\om}{\Rey}\right)(\Dl\Pe^\bot u_q|\fr{\We}{\Rey}\Pe^\bot\dv\tau_q)\right|\\
\nn&\le&(\fr{1}{\We}+\fr{2M'}{\Rey})\|\Dl\Pe^\bot u_q\|_{L^2}\left\|\fr{\We}{\Rey}\Pe^\bot\dv\tau_q\right\|_{L^2}\\
&\le&\fr14\fr{M'}{\We}\left\|\fr{\We}{\Rey}\Pe^\bot\dv\tau_q\right\|_{L^2}^2+\left(\fr832^{q_1}\right)^2
\left(\fr{4M'^2\We}{(M'-1)\Rey}+\fr{\Rey}{\We}\right)\fr{1}{\Rey}\|\nb\Pe^\bot u_q\|_{L^2}^2.
\eeq

\beq\label{eq-21}
\nn&&\fr{2(1-\om)}{\Rey}\left|(\Dl\Pe^\bot u_q|\fr{\We}{\Rey}\Dl\Pe^\bot\dv\tau_q)\right|\\
\nn&\le&\fr{2}{\Rey}\left(\fr83 2^{q_1}\right)^3\|\nb\Pe^\bot u_q\|_{L^2}\|\fr{\We}{\Rey}\Pe^\bot\dv\tau_q\|_{L^2}\\
&\le&\fr{1}{4}\fr{1}{\We}\left\|\fr{\We}{\Rey}\Pe^\bot\dv\tau_q\right\|_{L^2}^2+\fr{4\We}{\Rey}\left(2^{q_1}\fr83\right)^6\fr{1}{\Rey}\|\nb\Pe^\bot u_q\|_{L^2}^2.
\eeq

\beq\label{eq-22}
\nn&&\left|(\nb a_q|\fr{\We}{\Rey}\Dl\Pe^\bot\dv\tau_q)\right|\\
\nn&\le&\left(\fr83 2^{q_1}\right)^2\|\nb a_q\|_{L^2}\left\|\fr{\We}{\Rey}\Pe^\bot\dv\tau_q\right\|_{L^2}\\
&\le&\fr{1}{4}\fr{1}{\We}\left\|\fr{\We}{\Rey}\Pe^\bot\dv\tau_q\right\|_{L^2}^2+\Rey\We\left(\fr83 2^{q_1}\right)^4\fr{1}{\Rey}\|\nb a_q\|_{L^2}^2.
\eeq
 Moreover,
\be\label{eq-23}
\fr{2\om}{\Rey}\|\Dl\Pe^\bot u_q\|_{L^2}^2\le2\left(\fr83 2^{q_1}\right)^2\fr{1}{\Rey}\|\nb\Pe^\bot u_q\|^2_{L^2}.
\ee
It follows from \eqref{eq-17}--\eqref{eq-23} that
\beq\label{eq-24}
\nn&&\fr{1}{\Rey}\|\nb\Pe^\bot u_q\|_{L^2}^2-\fr{2\om}{\Rey}\|\Dl\Pe^\bot u_q\|_{L^2}^2+\fr{1}{\Rey}\|\nb a_q\|_{L^2}^2+\fr{M'-1}{\We}\left\|\fr{\We}{\Rey}\Pe^\bot\dv\tau_q\right\|_{L^2}^2\\
\nn&&+\fr{1}{\We}\|\fr{\We}{\Rey}\nb\Pe^\bot\dv\tau_q\|^2_{L^2}-\fr{2(1-\om)}{(\Rey)^2}(\Dl\Pe^\bot u_q|\nb a_q)+(1-\fr{1}{\Rey\We})(\nb a_q|\fr{\We}{\Rey}\Pe^\bot\dv\tau_q)\\
\nn&&+\left(\fr{1}{\We}-\fr{2(1-\om)+2M'\om}{\Rey}\right)(\Dl\Pe^\bot u_q|\fr{\We}{\Rey}\Pe^\bot\dv\tau_q)\\
\nn&&-\fr{2(1-\om)}{\Rey}(\Dl\Pe^\bot u_q|\fr{\We}{\Rey}\Dl\Pe^\bot\dv\tau_q)+(\nb a_q|\fr{\We}{\Rey}\Dl\Pe^\bot\dv\tau_q)\\
\nn&\ge&\left(\fr34-\left(\fr832^{q_1}\right)^2
\left(\fr{4M'^2\We}{(M'-1)\Rey}+\fr{\Rey}{\We}\right)-\fr{4\We}{\Rey}\left(2^{q_1}\fr83\right)^6-2\left(\fr83 2^{q_1}\right)^2\right)\fr{1}{\Rey}\|\nb\Pe^\bot u_q\|^2_{L^2}\\
\nn&&+\left(\fr12-\Rey\We\left(\fr832^{q_1}\right)^4\right)\fr{1}{\Rey}\|\nb a_q\|_{L^2}^2+\fr{1}{\We}\left\|\fr{\We}{\Rey}\nb\Pe^\bot\dv\tau_q\right\|_{L^2}^2\\
&&+\left(\fr{3M'}{4}-\fr32-\fr{(\Rey\We-1)^2}{\Rey\We}\right)\fr{1}{\We}\left\|\fr{\We}{\Rey}\Pe^\bot\dv\tau_q\right\|_{L^2}^2.
\eeq
Taking
\be\label{eq-25}
M':=\fr43\left(\Rey\We+\fr{1}{\Rey\We}\right)+2,
\ee
and
\be\label{eq-26}
2^{q_1}\le\fr{3}{32}\min\left(1,\Rey, \fr{2}{\left(\Rey\We\right)^{\fr14}}, \left(\fr{\We}{\Rey}\right)^{\fr12}, 2\left(\fr{\Rey}{\We}\right)^{\fr16}, \fr{\sqrt{M'-1}}{2M'}\cdot\sqrt{\fr{\Rey}{\We}}\right),
\ee
then \eqref{eq-12}, \eqref{eq-15} and \eqref{eq-18} hold, and
\begin{gather*}
\Rey\We\left(\fr832^{q_1}\right)^4\le\fr{1}{16}, \qquad \left(\fr832^{q_1}\right)^2\fr{4M'^2\We}{(M'-1)\Rey}\le\fr{1}{16},\\
\left(\fr832^{q_1}\right)^2\fr{\Rey}{\We}\le\fr{1}{16}, \quad \fr{4\We}{\Rey}\left(\fr832^{q_1}\right)^6\le\fr{1}{16},\quad 2\left(\fr832^{q_1}\right)^2\le\fr18.
\end{gather*}
Accordingly,
\beq\label{eq-27}
M'-\fr{17}{4}\ge\fr{5}{12},
\eeq

\beq\label{eq-28}
\fr34-\left(\fr832^{q_1}\right)^2
\left(\fr{4M'^2\We}{(M'-1)\Rey}+\fr{\Rey}{\We}\right)-\fr{4\We}{\Rey}\left(2^{q_1}\fr83\right)^6-2\left(\fr83 2^{q_1}\right)^2\ge\fr{7}{16},
\eeq

\be\label{eq-28+}
\fr12-\Rey\We\left(\fr832^{q_1}\right)^4\ge\fr{7}{16}, \quad \mathrm{and} \quad \fr{3M'}{4}-\fr32-\fr{(\Rey\We-1)^2}{\Rey\We}\ge2.
\ee
Next, we estimate the right hand side of \eqref{eq-10}. To this end, notice first that integrating by parts yields
\beq\label{eq-29}
 \nn&&-(v_q\cdot\nb\Pe^\bot u_q|\Dl\Pe^\bot\dv\tau_q)-(v_q\cdot\nb\Pe^\bot\dv\tau_q|\Dl\Pe^\bot u_q)\\
 &=&(\nb v_q\nb\Pe^\bot u_q|\nb \Pe^\bot\dv\tau_q)+(\nb v_q\nb\dv\tau_q|\nb\Pe^\bot u_q)-(\dv v_q|\nb\Pe^\bot u_q:\nb\Pe^\bot\dv\tau_q).
 \eeq
Substituting this equality to \eqref{eq-10}, using the fact $q\le q_1$, it is not difficult to verify that
\beq\label{eq-30}
\nn&& R. H. S. of\, \eqref{eq-10}\\
\nn&\le&C\left(\|a_q\|_{L^2}+\|\Pe^\bot u_q\|_{L^2}+\left\|\fr{\We}{\Rey}\Pe^\bot\dv\tau_q\right\|_{L^2}\right)\\
&&\cdot\left[\left(\|f_q\|_{L^2}+\|g_q^\sharp\|_{L^2}+\|h_q^\sharp\|_{L^2}\|\right)+\|\nb v_q\|_{L^\infty}\left(\|a_q\|_{L^2}+\|\Pe^\bot u_q\|_{L^2}+\left\|\fr{\We}{\Rey}\Pe^\bot\dv\tau_q\right\|_{L^2}\right)\right].
\eeq
Similar to \eqref{com-CH}, we have
\beq\label{com-CLL}
\nn&&\|f_q\|_{L^2}+\|g_q^\sharp\|_{L^2}+\|h_q^\sharp\|_{L^2}\\
&\le& \|F_q\|_{L^2}+\|\Pe^\bot G_q\|_{L^2}+\|\Pe^\bot H_q\|_{L^2}+ C\|\nb v\|_{L^\infty}\sum_{|q'-q|\le4}\left(\| a_{q'}\|_{L^2}+\| u_{q'}\|_{L^2}+\|\dv\tau_{q'}\|_{L^2}\right).
\eeq
Now we infer from \eqref{eq-16}, \eqref{eq-24}--\eqref{eq-28+}, \eqref{eq-30} and \eqref{com-CLL} that there exist two constants $c_2$ and $C$
depending on $d, \Rey, \We$ and $\om$, such that if $q\le q_1$, then \eqref{eq-10} implies
\beq\label{es-CLL}
\fr{d}{dt}Y_q+c_22^{2q}Y_q
\nn&\le& C\Bigg(\|F_q\|_{L^2}+\|\Pe^\bot G_q\|_{L^2}+\|\Pe^\bot H_q\|_{L^2}\\
&&+\|\nb v\|_{L^\infty}\sum_{|q'-q|\le4}\left(\| a_{q'}\|_{L^2}+\| u_{q'}\|_{L^2}+\|\dv\tau_{q'}\|_{L^2}\right)\Bigg).
\eeq

Part ({\bf ii}). $q_1<q\le q_0$. Multiplying \eqref{c-1tau} by $\fr{\We}{2\om\Rey}$, adding the resulting equation to \eqref{la} and
\eqref{Cu}, we arrive at
\beq\label{eq-31}
\nn&&\fr12\fr{d}{dt}\left(\|a_q\|_{L^2}^2+\|\Pe^\bot u_q\|_{L^2}^2+\fr{\We}{2\om\Rey}\|\Lm^{-1}\Pe^\bot\dv\tau_q\|_{L^2}^2\right)\\
\nn&&+\fr{2(1-\om)}{\Rey}\|\nb\Pe^\bot u_q\|_{L^2}^2+\fr{1}{2\om\Rey}\|\Lm^{-1}\Pe^\bot\dv\tau_q\|_{L^2}^2\\
&\le&R.H.S.\ of \ \eqref{la}\ and\ \eqref{Cu}+\fr{\We}{2\om\Rey}R.H.S.\ of \ \eqref{c-1tau}.
\eeq
Multiplying \eqref{a-u} and \eqref{cu-tau} by $\al\fr{1-\om}{\Rey}$ and$-\beta\fr{1-\om}{2\om}\fr{\We}{\Rey}$ respectively, adding them to \eqref{eq-31}, we are led to
\beq\label{eq-32}
\nn&&\fr12\fr{d}{dt}\left(\|a_q\|_{L^2}^2+\|\Pe^\bot u_q\|_{L^2}^2+\fr{\We}{2\om\Rey}\|\Lm^{-1}\Pe^\bot\dv\tau_q\|_{L^2}^2\right.\\
\nn&&\left.+\fr{2\alpha(1-\om)}{\Rey}(\nb a_q|\Pe^\bot u_q)-\fr{\beta(1-\om)\We}{\om\Rey}(\Pe^\bot u_q|\Pe^\bot\dv\tau_q)\right)+\fr{\al(1-\om)}{\Rey}\|\nb a_q\|_{L^2}^2\\
\nn&&+(2-\al-\beta)\fr{1-\om}{\Rey}\|\nb\Pe^\bot u_q\|_{L^2}^2+\fr{\beta(1-\om)\We}{2\om(\Rey)^2}\|\Pe^\bot\dv\tau_q\|_{L^2}^2\\
\nn&&+\fr{1}{2\om\Rey}\|\Lm^{-1}\Pe^\bot\dv\tau_q\|_{L^2}^2-\fr{2\al(1-\om)^2}{(\Rey)^2}(\Dl\Pe^\bot u_q|\nb a_q)\\
\nn&&-\fr{\al(1-\om)}{(\Rey)^2}(\nb a_q|\Pe^\bot\dv\tau_q)+\fr{\beta(1-\om)^2\We}{\om(\Rey)^2}(\Dl\Pe^\bot u_q|\Pe^\bot\dv\tau_q)\\
\nn&&-\fr{\beta(1-\om)\We}{2\om\Rey}(\nb a_q|\Pe^\bot\dv\tau_q)-\fr{\beta(1-\om)}{2\om\Rey}(\Pe^\bot u_q|\Pe^\bot\dv\tau_q)\\
&\le& \fr{\al(1-\om)}{\Rey} R.H. S. \ of\ \eqref{a-u}-\fr{\beta(1-\om)\We}{2\om\Rey}R. H. S. \ of\ \eqref{cu-tau}+R. H. S. \ of\ \eqref{eq-31}.
\eeq
Now we estimate the cross terms in the left hand side of \eqref{eq-32} one by one. Indeed, all of them can be bounded by using Cauchy-Schwarz inequality and Lemma \ref{Bernstein}. More precisely,
\beq\label{eq-33}
\nn\fr{2\alpha(1-\om)}{\Rey}|(\nb a_q|\Pe^\bot u_q)|&\le&\fr{8}{3}2^{q_0}\fr{\alpha}{\Rey}\left(\|a_q\|_{L^2}^2+\|\Pe^\bot u_q\|_{L^2}^2\right)\\
&\le&\fr14\left(\|a_q\|_{L^2}^2+\|\Pe^\bot u_q\|_{L^2}^2\right),
\eeq
provided
\be\label{eq-34}
\al\le2^{-q_0}\fr{3}{32}\Rey.
\ee

\beq\label{eq-35}
\nn\fr{\beta(1-\om)\We}{\om\Rey}|(\Pe^\bot u_q|\Pe^\bot\dv\tau_q)|&\le&\fr{\beta\We}{\om\Rey}\fr832^{q_0}\|\Pe^\bot u_q\|_{L^2}\|\Lm^{-1}\Pe^\bot\dv\tau_q\|_{L^2}\\
\nn&\le&\fr14\|\Pe^\bot u_q\|_{L^2}^2+\left(\fr{\beta\We}{\om\Rey}\fr832^{q_0}\right)^2\|\Lm^{-1}\Pe^\bot\dv\tau_q\|_{L^2}^2\\
&\le&\fr14\|\Pe^\bot u_q\|_{L^2}^2+\fr14\fr{\We}{\om\Rey}\|\Lm^{-1}\Pe^\bot\dv\tau_q\|_{L^2}^2,
\eeq
provided
\be\label{eq-36}
\beta\le\fr{3}{16}2^{-q_0}\sqrt{\fr{\om\Rey}{\We}}.
\ee
Consequently, if \eqref{eq-34} and \eqref{eq-36} hold, we have
\beq\label{eq-37}
\nn&&\|a_q\|_{L^2}^2+\|\Pe^\bot u_q\|_{L^2}^2+\fr{\We}{2\om\Rey}\|\Lm^{-1}\Pe^\bot\dv\tau_q\|_{L^2}^2\\
\nn&&+\fr{2\alpha(1-\om)}{\Rey}(\nb a_q|\Pe^\bot u_q)-\fr{\beta(1-\om)\We}{\om\Rey}(\Pe^\bot u_q|\Pe^\bot\dv\tau_q)\\
&\ge&\fr34\|a_q\|_{L^2}^2+\fr12\|\Pe^\bot u_q\|_{L^2}^2+\fr14\fr{\We}{\om\Rey}\|\Lm^{-1}\Pe^\bot\dv\tau_q\|_{L^2}^2.
\eeq
Moreover,
\beq\label{eq-38}
\nn\fr{2\al(1-\om)^2}{(\Rey)^2}|(\Dl\Pe^\bot u_q|\nb a_q)|&\le&\left(\sqrt{\al}\fr832^{q_0}\fr{1}{\Rey}\right)\left[\fr{1-\om}{\Rey}\left(\|\nb\Pe^\bot u_q\|_{L^2}^2+\al\|\nb a_q\|_{L^2}^2\right)\right]\\
&\le&\fr14\left[\fr{1-\om}{\Rey}\left(\|\nb\Pe^\bot u_q\|_{L^2}^2+\al\|\nb a_q\|_{L^2}^2\right)\right],
\eeq
provided
\be\label{eq-39}
\al\le\left(2^{-q_0}\fr{3}{32}\Rey\right)^2.
\ee

\beq\label{eq-40}
\fr{\al(1-\om)}{(\Rey)^2}|(\nb a_q|\Pe^\bot\dv\tau_q)|\nn&\le&\left(\sqrt{\al}\fr432^{q_0}\fr{1}{\Rey}\right)\left(\fr{\al(1-\om)}{\Rey}\|\nb a_q\|_{L^2}^2+\fr{1}{\om\Rey}\|\Lm^{-1}\Pe^\bot\dv\tau_q\|_{L^2}^2\right)\\
&\le&\fr18\left(\fr{\al(1-\om)}{\Rey}\|\nb a_q\|_{L^2}^2+\fr{1}{\om\Rey}\|\Lm^{-1}\Pe^\bot\dv\tau_q\|_{L^2}^2\right),
\eeq
provided
\beq\label{eq-41}
\al\le\left(2^{-q_0}\fr{3}{32}\Rey\right)^2.
\eeq

\beq\label{eq-42}
\nn&&\fr{\beta(1-\om)^2\We}{\om(\Rey)^2}|(\Dl\Pe^\bot u_q|\Pe^\bot\dv\tau_q)|\\
\nn&\le&\fr14\fr{\beta(1-\om)\We}{\om(\Rey)^2}\|\Pe^\bot\dv\tau_q\|_{L^2}^2+\fr{\beta(1-\om)\We}{\om(\Rey)^2}\left(\fr832^{q_0}\right)^2
\|\nb\Pe\bot u_q\|_{L^2}^2\\
&\le&\fr14\fr{\beta(1-\om)\We}{\om(\Rey)^2}\|\Pe^\bot\dv\tau_q\|_{L^2}^2+\fr14\fr{1-\om}{\Rey}\|\nb \Pe^\bot u_q\|_{L^2}^2,
\eeq
provided
\be\label{eq-43}
\beta\le\left(\fr832^{q_0}\right)^{-2}\fr{\om\Rey}{4\We}.
\ee

\beq\label{eq-44}
\nn&&\fr{\beta(1-\om)\We}{2\om\Rey}|(\nb a_q|\Pe^\bot\dv\tau_q)|\\
&\le&\fr14\fr{\beta(1-\om)\We}{2\om(\Rey)^2}\|\Pe^\bot\dv\tau_q\|_{L^2}^2+\fr14\fr{\al(1-\om)}{\Rey}\|\nb a_q\|_{L^2}^2,
\eeq
provided
\be\label{eq-45}
\beta\le\fr{\om\al}{2\Rey\We}.
\ee

\beq\label{eq-46}
\nn&&\fr{\beta(1-\om)}{2\om\Rey}|(\Pe^\bot u_q|\Pe^\bot\dv\tau_q)|\\
\nn&=&\fr{\beta(1-\om)}{2\om\Rey}|(\Lm\Pe^\bot u_q|\Lm^{-1}\Pe^\bot\dv\tau_q)|\\
\nn&\le&\fr14\fr{1}{2\om\Rey}\|\Lm^{-1}\Pe^\bot\dv\tau_q\|_{L^2}^2+\fr{\beta^2}{2\om}\fr{1-\om}{\Rey}\|\nb\Pe^\bot u_q\|_{L^2}^2\\
&\le&\fr14\fr{1}{2\om\Rey}\|\Lm^{-1}\Pe^\bot\dv\tau_q\|_{L^2}^2+\fr14\fr{1-\om}{\Rey}\|\nb\Pe^\bot u_q\|_{L^2}^2,
\eeq
provided
\be\label{eq-47}
\beta\le\sqrt{\fr{\om}{2}}.
\ee
Collecting \eqref{eq-34}, \eqref{eq-36}, \eqref{eq-39}, \eqref{eq-41}, \eqref{eq-43}, \eqref{eq-45} and\eqref{eq-47}, we choose
\be\label{eq-48}
\al\le\min\left(\fr14, 2^{-q_0}\fr{3}{32}\Rey, \left(2^{-q_0}\fr{3}{32}\Rey\right)^2 \right)
\ee
and
\be\label{eq-49}
\beta\le\min\left(\fr14, 2^{-q_0}\fr{3}{16}\sqrt{\fr{\om\Rey}{\We}}, \left(\fr832^{q_0}\right)^{-2}\fr{\om\Rey}{4\We}, \fr{\om\al}{2\Rey\We}, \sqrt{\fr{\om}{2}}\right).
\ee
Then \eqref{eq-37}  and the following inequality hold,
\beq\label{eq-50}
\nn&&\fr{\al(1-\om)}{\Rey}\|\nb a_q\|_{L^2}^2+(2-\al-\beta)\fr{1-\om}{\Rey}\|\nb\Pe^\bot u_q\|_{L^2}^2+\fr{\beta(1-\om)\We}{2\om(\Rey)^2}\|\Pe^\bot\dv\tau_q\|_{L^2}^2\\
\nn&&+\fr{1}{2\om\Rey}\|\Lm^{-1}\Pe^\bot\dv\tau_q\|_{L^2}^2-\fr{2\al(1-\om)^2}{(\Rey)^2}(\Dl\Pe^\bot u_q|\nb a_q)\\
\nn&&-\fr{\al(1-\om)}{(\Rey)^2}(\nb a_q|\Pe^\bot\dv\tau_q)+\fr{\beta(1-\om)^2\We}{\om(\Rey)^2}(\Dl\Pe^\bot u_q|\Pe^\bot\dv\tau_q)\\
\nn&&-\fr{\beta(1-\om)\We}{2\om\Rey}(\nb a_q|\Pe^\bot\dv\tau_q)-\fr{\beta(1-\om)}{2\om\Rey}(\Pe^\bot u_q|\Pe^\bot\dv\tau_q)\\
\nn&\ge&\fr{3\al}{8}\fr{1-\om}{\Rey}\|\nb a_q\|_{L^2}^2+\fr34\fr{1-\om}{\Rey}\|\nb\Pe^\bot u_q\|_{L^2}^2\\
&&+\fr{\beta(1-\om)\We}{8\om(\Rey)^2}\|\Pe^\bot\dv\tau_q\|_{L^2}^2+\fr{1}{4\om\Rey}\|\Lm^{-1}\Pe^\bot\dv\tau_q\|_{L^2}^2.
\eeq
Now we are left to bound the right hand side of \eqref{eq-32}. To do so, noting first that
\beqno
&&(\Lm^{-1}(v_q\cdot\nb\Pe^\bot\dv\tau_q)|\Lm^{-1}\Pe^{\bot}\dv\tau_q)\\
&=&([\Lm^{-1},v_q\cdot\nb]\Pe^\bot\dv\tau_q)|\Lm^{-1}\Pe^{\bot}\dv\tau_q)
-\fr12\int\dv v_q|\Lm^{-1}\Pe^\bot\dv\tau_q|^2,
\eeqno
then Lemma \ref{lem-commu} implies
\be\label{eq-51}
|(\Lm^{-1}(v_q\cdot\nb\Pe^\bot\dv\tau_q)|\Lm^{-1}\Pe^{\bot}\dv\tau_q)|\le C\|\nb v_q\|_{L^\infty}\|\Lm^{-1}\Pe^\bot\dv\tau_q\|_{L^2}^2.
\ee
As a result, by virtue of Cauchy-Schwarz inequality and the fact $q\le q_0$, we arrive at
\beq\label{eq-52}
\nn &&R.H.S. of \eqref{eq-32}\\
\nn&\le&C\left(\|f_q\|_{L^2}+\|g_q^{\sharp}\|_{L^2}+\|\Lm^{-1}h_q^{\sharp}\|_{L^2}\right)\left(\|a_q\|_{L^2}+\|\Pe^\bot u_q\|_{L^2}+\sqrt{\fr{\We}{\om\Rey}}\|\Lm^{-1}\Pe^\bot\dv\tau_q\|_{L^2}\right)\\
&&+C\|\nb v_q\|_{L^\infty}\left(\|a_q\|_{L^2}+\|\Pe^\bot u_q\|_{L^2}+\sqrt{\fr{\We}{\om\Rey}}\|\Lm^{-1}\Pe^\bot\dv\tau_q\|_{L^2}\right)^2.
\eeq
Using the commutator estimates in \cite{Bahouri-Chemin-Danchin11} once more, one easily deduces that
\beq\label{eq-53}
\nn&&\|f_q\|_{L^2}+\|g_q^{\sharp}\|_{L^2}+\|\Lm^{-1}h_q^{\sharp}\|_{L^2}\\
\nn&\le&C\Bigg(\|F_q\|_{L^2}+\|\Pe^\bot G_q\|_{L^2}+\|\Lm^{-1}\Pe^\bot H_q\|_{L^2}\\
&&+\|\nb v\|_{L^\infty}\sum_{|q-q'|\le4}\left(\| a_{q'}\|_{L^2}+\| u_{q'}\|_{L^2}+\|\Lm^{-1}\dv\tau_{q'}\|_{L^2}\right)\Bigg).
\eeq
From \eqref{eq-37}, \eqref{eq-50} and \eqref{eq-52}--\eqref{eq-53}, we conclude  that there exist two constants $c_3$ and $C$ depending
on $d, \Rey, \We$ and $\om$, such that if $q_1<q\le q_0$ and $\al, \beta$ satisfy \eqref{eq-48} and \eqref{eq-49}, then \eqref{eq-32} implies
\beq\label{es-CLH}
\fr{d}{dt}Y_q+c_32^{2q}Y_q
\nn&\le& C\Bigg(\|F_q\|_{L^2}+\|\Pe^\bot G_q\|_{L^2}+\|\Lm^{-1}\Pe^\bot H_q\|_{L^2}\\
&&+\|\nb v\|_{L^\infty}\sum_{|q'-q|\le4}\left(\| a_{q'}\|_{L^2}+\| u_{q'}\|_{L^2}+\|\Lm^{-1}\dv\tau_{q'}\|_{L^2}\right)\Bigg).
\eeq

\bigbreak
\noindent{\bf The Incompressible Part} \par
\noindent We begin this part by giving the following five equalities in which the $L^2$ estimates and cross terms of the corresponding
incompressible part $(\Pe u_q,\Pe\dv\tau_q)$ of $(u_q,\dv\tau_q)$ are involved.  Since they are obtained in the same way with those in the
compressible part, we give a list of the results directly.
\be\label{eq-54}
\frac12\fr{d}{dt}\|\Pe u_q\|_{L^2}^2+\fr{1-\om}{\Rey}\|\nb\Pe u_q\|_{L^2}^2-\fr{1}{\Rey}(\Pe u_q|\Pe\dv\tau_q)
=(g_q^\flat|\Pe u_q)+\fr12\int\dv v_q|\Pe u_q|^2.
\ee

\beq\label{eq-55}
\nn\fr12\fr{d}{dt}\|\Pe \dv\tau_q\|_{L^2}^2&+&\fr{1}{\We}\|\Pe\dv\tau_q\|^2_{L^2}-\fr{\om}{\We}(\Dl\Pe u_q|\Pe\dv\tau_q)\\
&=&(h_q^\flat|\Pe\dv\tau_q)+\fr12\int\dv v_q|\Pe\dv\tau_q|^2.
\eeq

\beq\label{eq-56}
\nn\fr12\fr{d}{dt}\|\Lm^{-1}\Pe\dv\tau_q\|_{L^2}^2&+&\fr{1}{\We}\|\Lm^{-1}\Pe\dv\tau_q\|^2_{L^2}+\fr{\om}{\We}(\Pe u_q|\Pe\dv\tau_q)\\
&=&(\Lm^{-1}h_q^\flat|\Lm^{-1}\Pe\dv\tau_q)-(\Lm^{-1}(v_q\cdot\nb\Pe\dv\tau_q)|\Lm^{-1}\Pe\dv\tau_q).
\eeq

\beq\label{eq-57}
\nn&&\fr{d}{dt}(\Pe u_q|\Pe\dv\tau_q)+\fr{\om}{\We}\|\nb\Pe u_q\|_{L^2}^2-\fr{1}{\Rey}\|\Pe\dv\tau_q\|_{L^2}^2
-\fr{1-\om}{\Rey}(\Dl\Pe u_q|\Pe\dv\tau_q)\\
&&+\fr{1}{\We}(\Pe u_q|\Pe\dv\tau_q)=(g_q^\flat|\Pe\dv\tau_q)+(h_q^\flat|\Pe u_q)+\int\dv v_q\Pe u_q\cdot\Pe\dv\tau_q.
\eeq

\beq\label{eq-58}
&&\nn\fr{d}{dt}(\Pe u_q|\Dl \Pe \dv\tau_q)+\fr{1}{\Rey}\|\nb\Pe\dv\tau_q\|_{L^2}^2-\fr{\om}{\We}\|\Dl\Pe u_q\|_{L^2}^2\\
&&\nn-\fr{1-\om}{\Rey}(\Dl\Pe u_q|\Dl\Pe\dv\tau_q)+\fr{1}{\We}(\Dl \Pe u_q|\Pe\dv\tau_q)\\
&=&(g_q^\flat|\Dl\Pe\dv\tau_q)+(h_q^\flat|\Dl\Pe u_q)-(v_q\cdot\nb\Pe u_q|\Dl\Pe\dv\tau_q)-(v_q\cdot\nb\Pe\dv\tau_q|\Dl\Pe u_q).
\eeq
Denote
\beqno
\tl{Y}_q:=\left[2\|\Pe u_q\|_{L^2}^2+\left\|\fr{(1-\om)\We}{\om\Rey}\Pe\dv\tau_q\right\|_{L^2}^2-2\left(\Pe u_q|\fr{(1-\om)\We}{\om\Rey}\Pe\dv\tau_q\right)\right]^{\fr12}, \quad \mathrm{for}\ q>q_0,
\eeqno

\beqno
\tl{Y}_q:=\left[\|\Pe u_q\|_{L^2}^2+M'\|\fr{\We}{\Rey}\Pe\dv\tau_q\|_{L^2}^2+2(\Pe u_q|\fr{\We}{\Rey}\Pe\dv\tau_q)+2(\Pe u_q|\fr{\We}{\Rey}\Dl\Pe\dv\tau_q)\right]^{\fr12}, \quad \mathrm{for}\ q\le q_1,
\eeqno

\beqno
\tl{Y}_q:=\left[\|\Pe u_q\|_{L^2}^2+\fr{\We}{\om\Rey}\|\Lm^{-1}\Pe\dv\tau_q\|_{L^2}^2
-\fr{2\beta(1-\om)\We}{\om\Rey}(\Pe u_q|\Pe\dv\tau_q)\right]^{\fr12}, \quad \mathrm{for}\ q_1<q\le q_0,
\eeqno
where $q_0, q_1, M', \beta$ are given as in the compressible part. Next, we shall bound $\tl{Y}_q$ for all $q\in \Z$.
\bigbreak
\noindent{\em Step 1: high frequencies.}\par
\noindent Similar to \eqref{eq-2}, from \eqref{eq-55}, \eqref{eq-57}, we have
\beq\label{eq-59}
\nn&&\fr12\fr{d}{dt}\left(\left\|\fr{(1-\om)\We}{\om\Rey}\Pe\dv\tau_q\right\|_{L^2}^2-2\left(\Pe u_q|\fr{(1-\om)\We}{\om\Rey}\Pe\dv\tau_q\right)\right)\\
\nn&&+\fr{1}{(1-\om)\We}\left\|\fr{(1-\om)\We}{\om\Rey}\Pe\dv\tau_q\right\|^2_{L^2}-\fr{1-\om}{\Rey}\|\nb\Pe u_q\|_{L^2}^2-\fr{1}{\We}\left(\Pe u_q|\fr{(1-\om)\We}{\om\Rey}\Pe\dv\tau_q\right)\\
&=&\left(\fr{(1-\om)\We}{\om\Rey}\right)^2 {R.H.S.\ of\ } \eqref{eq-55}-\fr{(1-\om)\We}{\om\Rey}{R.H.S.\ of\ } \eqref{eq-57}.
\eeq
Multiplying \eqref{eq-54} by 2, and then adding the resulting inequality to \eqref{eq-59} yields
\beq\label{eq-60}
\nn&&\fr12\fr{d}{dt}\left(2\|\Pe u_q\|_{L^2}^2+\left\|\fr{(1-\om)\We}{\om\Rey}\Pe\dv\tau_q\right\|_{L^2}^2-2\left(\Pe u_q|\fr{(1-\om)\We}{\om\Rey}\Pe\dv\tau_q\right)\right)\\
\nn&&+\fr{1-\om}{\Rey}\|\nb\Pe u_q\|_{L^2}^2+\fr{1}{(1-\om)\We}\left\|\fr{(1-\om)\We}{\om\Rey}\Pe\dv\tau_q\right\|^2_{L^2}\\
\nn&&-\fr{2}{\Rey}(\Pe\dv\tau_q|\Pe u_q)-\fr{1}{\We}\left(\Pe u_q|\fr{(1-\om)\We}{\om\Rey}\Pe\dv\tau_q\right)\\
&=&2{R.H.S.\ of\ } \eqref{eq-54}+\left(\fr{(1-\om)\We}{\om\Rey}\right)^2 {R.H.S.\ of\ } \eqref{eq-55}-\fr{(1-\om)\We}{\om\Rey}{R.H.S.\ of\ } \eqref{eq-57}.
\eeq
Obviously,
\beq\label{eq-61}
\nn&&2\|\Pe u_q\|_{L^2}^2+\left\|\fr{(1-\om)\We}{\om\Rey}\Pe\dv\tau_q\right\|_{L^2}^2-2\left(\Pe u_q|\fr{(1-\om)\We}{\om\Rey}\Pe\dv\tau_q\right)\\
&\approx&\|\Pe u_q\|_{L^2}^2+\left\|\fr{(1-\om)\We}{\om\Rey}\Pe\dv\tau_q\right\|_{L^2}^2,
\eeq
and
\beq\label{eq-62}
\nn&&\left|\fr{2}{\Rey}(\Pe\dv\tau_q|\Pe u_q)+\fr{1}{\We}\left(\Pe u_q|\fr{(1-\om)\We}{\om\Rey}\Pe\dv\tau_q\right)\right|\\
\nn&=&\fr{1+\om}{1-\om}\fr{1}{\We}\left|\left(\Pe u_q|\fr{(1-\om)\We}{\om\Rey}\Pe\dv\tau_q\right)\right|\\
\nn&\le&\fr12\fr{1}{1-\om}\fr{1}{\We}\|\fr{(1-\om)\We}{\om\Rey}\Pe\dv\tau_q\|_{L^2}^2+\left(\fr432^{-q_0}\right)^2\fr{2}{(1-\om)\We}\|\nb \Pe u_q\|_{L^2}^2\\
&\le&\fr12\fr{1}{1-\om}\fr{1}{\We}\|\fr{(1-\om)\We}{\om\Rey}\Pe\dv\tau_q\|_{L^2}^2+\fr12\fr{1-\om}{\Rey}\|\nb\Pe u_q\|_{L^2}^2,
\eeq
provided
\be\label{eq-63}
\left(\fr432^{-q_0}\right)^2\fr{2}{(1-\om)\We}\le\fr12\fr{1-\om}{\Rey},\qquad\mathrm{i. e.}\qquad 2^{q_0}\ge \fr{8}{3(1-\om)}\sqrt{\fr{\Rey}{\We}}.
\ee
It is easy to see that
\beno
\left(\fr43\right)^{\fr32}\sqrt{\fr{2\Rey(\Rey\We+2)}{(1-\om)^3\We}}\ge\fr{8}{3(1-\om)}\sqrt{\fr{\Rey}{\We}}.
\eeno
Therefore, \eqref{eq-63} is a consequence of \eqref{eq-8+}.
It follows from \eqref{eq-62} and \eqref{eq-63} that
\beq\label{eq-64}
\nn&&\fr{1-\om}{\Rey}\|\nb\Pe u_q\|_{L^2}^2+\fr{1}{(1-\om)\We}\left\|\fr{(1-\om)\We}{\om\Rey}\Pe\dv\tau_q\right\|^2_{L^2}\\
\nn&&-\fr{2}{\Rey}(\Pe\dv\tau_q|\Pe u_q)-\fr{1}{\We}\left(\Pe u_q|\fr{(1-\om)\We}{\om\Rey}\Pe\dv\tau_q\right)\\
&\ge&\fr12\fr{1-\om}{\Rey}\|\nb\Pe u_q\|_{L^2}^2+\fr12\fr{1}{(1-\om)\We}\left\|\fr{(1-\om)\We}{\om\Rey}\Pe\dv\tau_q\right\|^2_{L^2}.
\eeq
Using Cauchy-Schwarz inequality and commutator estimates, we can bound the right hand side of \eqref{eq-60} as follows,
\beq\label{eq-65}
\nn&&R.H.S. of \eqref{eq-60}\\
\nn&\le&C\left(\|g_q^\flat\|_{L^2}+\|h_q^\flat\|_{L^2}\right)\left(\|\nb\Pe u_q\|_{L^2}+\left\|\fr{(1-\om)\We}{\om\Rey}\Pe\dv\tau_q\right\|_{L^2}\right)\\
\nn&&+C\|\nb v\|_{L^\infty}\left(\|\nb\Pe u_q\|_{L^2}+\left\|\fr{(1-\om)\We}{\om\Rey}\Pe\dv\tau_q\right\|_{L^2}\right)^2\\
&\le&C\left(\|\Pe G_q\|_{L^2}+\|\Pe H_q\|_{L^2}+\|\nb v\|_{L^\infty}\sum_{|q-q'|\le4}(\|u_{q'}\|_{L^2}+\|\dv\tau_{q'}\|_{L^2})\right).
\eeq
Substituting \eqref{eq-65} into \eqref{eq-60}, using \eqref{eq-61} and \eqref{eq-64}, we find that there exist constants $\tl{c}_1$ and $C$ depending on $d, \Rey, \We, $ and $\om$, such that if $q\ge q_0$, there holds
\beq\label{eq66}
\fr{d}{dt}\tl{Y}_q+\tl{c}_1\tl{Y}_q\le C\left(\|\Pe G_q\|_{L^2}+\|\Pe H_q\|_{L^2}+\|\nb v\|_{L^\infty}\sum_{|q-q'|\le4}(\|u_{q'}\|_{L^2}+\|\dv\tau_{q'}\|_{L^2})\right).
\eeq
{\em Step 2: low frequencies.}\par
Part {\bf(i)}. $q\le q_1$. Similar to \eqref{eq-10}, a linear combination of \eqref{eq-54}, \eqref{eq-55}, \eqref{eq-57} and \eqref{eq-58} yields
\beq\label{eq67}
\nn&&\fr12\fr{d}{dt}\left[\|\Pe u_q\|_{L^2}^2+M'\|\fr{\We}{\Rey}\Pe\dv\tau_q\|_{L^2}^2+2(\Pe u_q|\fr{\We}{\Rey}\Pe\dv\tau_q)+2(\Pe u_q|\fr{\We}{\Rey}\Dl\Pe\dv\tau_q)\right]\\
\nn&&+\fr{1}{\Rey}\|\nb\Pe u_q\|_{L^2}^2-\fr{\om}{\Rey}\|\Dl\Pe u_q\|_{L^2}^2+\fr{M'-1}{\We}\|\fr{\We}{\Rey}\Pe\dv\tau_q\|_{L^2}^2\\
\nn&&+\fr{1}{\We}\|\fr{\We}{\Rey}\nb\Pe\dv\tau_q\|^2_{L^2}-\fr{1-\om}{\Rey}(\Dl\Pe u_q|\fr{\We}{\Rey}\Dl\Pe\dv\tau_q)\\
\nn&&+\left(\fr{1}{\We}-\fr{(1-\om)+M'\om}{\Rey}\right)(\Dl\Pe u_q|\fr{\We}{\Rey}\Pe\dv\tau_q)\\
&=&R.H.S. \ of\ \eqref{eq-54}+M'\left(\fr{\We}{\Rey}\right)^2R.H.S. \ of\ \eqref{eq-55}+\fr{\We}{\Rey}R.H.S. \ of\ \eqref{eq-57}\ and\ \eqref{eq-58},
\eeq
where $M'$ is the same as in \eqref{eq-10}. The cross terms in \eqref{eq67} and the right hand side can be estimated  in a similar manner as those in \eqref{eq-10}, accordingly, we infer from \eqref{eq67} that there exist constants $\tl{c}_2$ and $C$ depending on $d, \Rey, \We, $ and $\om$, such that if $q\le q_1$, there holds
\be\label{eq68}
\fr{d}{dt}\tl{Y}_q+\tl{c}_22^{2q}\tl{Y}_q\le C\left(\|\Pe G_q\|_{L^2}+\|\Pe H_q\|_{L^2}+\|\nb v\|_{L^\infty}\sum_{|q-q'|\le4}(\|u_{q'}\|_{L^2}+\|\dv\tau_{q'}\|_{L^2})\right).
\ee

Part {\bf(ii)}. $q_1<q\le q_0$. Similar to \eqref{eq-32}, by a linear combination of \eqref{eq-54}, \eqref{eq-56} and \eqref{eq-57}, we get
\beq\label{eq69}
\nn&&\fr12\fr{d}{dt}\left(\|\Pe u_q\|_{L^2}^2+\fr{\We}{\om\Rey}\|\Lm^{-1}\Pe\dv\tau_q\|_{L^2}^2
-\fr{2\beta(1-\om)\We}{\om\Rey}(\Pe u_q|\Pe\dv\tau_q)\right)\\
\nn&&+(1-\beta)\fr{1-\om}{\Rey}\|\nb\Pe u_q\|_{L^2}^2+\fr{\beta(1-\om)\We}{\om(\Rey)^2}\|\Pe\dv\tau_q\|_{L^2}^2+\fr{1}{\om\Rey}\|\Lm^{-1}\Pe\dv\tau_q\|_{L^2}^2\\
\nn&&+\fr{\beta(1-\om)^2\We}{\om(\Rey)^2}(\Dl\Pe u_q|\Pe\dv\tau_q)-\fr{\beta(1-\om)}{\om\Rey}(\Pe u_q|\Pe\dv\tau_q)\\
&\le&R.H. S. \ of\ \eqref{eq-54}+\fr{\We}{\om\Rey}R. H. S. \ of\ \eqref{eq-56}-\fr{\beta(1-\om)\We}{\om\Rey}R. H. S. \ of\ \eqref{eq-57},
\eeq
where $\beta$ is the same as in \eqref{eq-32}. Arguing as in the corresponding compressible case, we find that there exist constants
$\tl{c}_3$ and $C$ depending on $d, \Rey, \We, $ and $\om$, such that  if $q_1<q\le q_0$, \eqref{eq69} implies
\be\label{eq70}
\fr{d}{dt}\tl{Y}_q+\tl{c}_32^{2q}\tl{Y}_q\le C\left(\|\Pe G_q\|_{L^2}+\|\Pe H_q\|_{L^2}+\|\nb v\|_{L^\infty}\sum_{|q-q'|\le4}(\|u_{q'}\|_{L^2}+\|\dv\tau_{q'}\|_{L^2})\right).
\ee

\bigbreak
\noindent{\bf Global estimates of $(a, u, \dv\tau)$}\par
\noindent Let $X_q:=Y_q+\tl{Y}_q$. Moreover, we set
\beno
s(q):=\begin{cases}
  2^{q},\quad\, \mathrm{if}\quad q>q_0,\\
  1,\ \ \,\quad\mathrm{if}\quad q\le q_0,
   \end{cases}
  \quad\mathrm{and}\quad
  \tl{s}(q):=\begin{cases}
   1,\ \ \,\quad\mathrm{if}\quad q>q_0,\\
  2^{2q},\quad\mathrm{if}\quad q\le q_0.
   \end{cases}
\eeno
Recalling the definition of $Y_q$ and $\tl{Y}_q$, using Bernstein's inequalities, we infer from  \eqref{eq-4}, \eqref{eq-16}, \eqref{eq-37} and the corresponding estimates for incompressible part that
\be\label{X=}
X_q\approx s(q)\| a_q\|_{L^2}+\|u_q\|_{L^2}+\|\dv\tau_q\|_{L^2}.
\ee
Now collecting \eqref{es-CH}, \eqref{es-CLL}, \eqref{es-CLH}, \eqref{eq66}, \eqref{eq68} and \eqref{eq70}, we conclude  that there exist constants $\bar{c}$ and $C$ depending on $d, \Rey, \We, $ and $\om$, such that for all $q\in \Z$
\beq\label{eq71}
\nn&&\fr{d}{dt}X_q+\bar{c}\tl{s}(q)X_q\\
&\le& C\left({s}(q)\|F_q\|_{L^2}+\|G_q\|_{L^2}+\|H_q\|_{L^2}+\|\nb v\|_{L^\infty}\sum_{|q-q'|\le4}X_{q'}\right).
\eeq
Performing a time integration in \eqref{eq71},  multiplying the resulting inequality by $2^{q(s-1)}$, and taking sum w. r. t. $q$ over $\Z$ yields
 \beqno
\nn&&\sum_{q\in\Z}2^{q(s-1)}X_q(t)+\bar{c}\sum_{q\in\Z}\tl{s}(q)2^{q(s-1)}\int_0^tX_qdt'\\
\nn&\le&\sum_{q\in\Z}2^{q(s-1)}X_q(0)+C(\|F\|_{L^1(\dot{B}^{s-1,s}_{2,1})}+\|G\|_{L^1_t(\dot{B}^{s-1}_{2,1})}
+\|H\|_{L^1_t(\dot{B}^{s-1}_{2,1})})\\
&&+C\int_0^t\|\nb v\|_{L^\infty}\sum_{q\in\Z}2^{q(s-1)}X_{q}dt'.
\eeqno
Using the fact \eqref{X=}, this inequality is nothing but
 \beqno
 \nn&&\|a(t)\|_{\dot{B}^{s-1,s}_{2,1}}+\|u(t)\|_{\dot{B}^{s-1}_{2,1}}+\|\dv\tau(t)\|_{\dot{B}^{s-1}_{2,1}}\\
 \nn&&+\|a\|_{L^1_t(\dot{B}^{s+1,s}_{2,1})}+\|u\|_{L^1_t(\dot{B}^{s+1,s-1}_{2,1})}+\|\dv\tau\|_{L^1_t(\dot{B}^{s+1,s-1}_{2,1})}\\                                                           \nn&\le&C\left(\|a_0\|_{\dot{B}^{s-1,s}_{2,1}}+\|u_0\|_{\dot{B}^{s-1}_{2,1}}+\|\dv\tau_0\|_{\dot{B}^{s-1}_{2,1}}+\|F\|_{L^1(\dot{B}^{s-1,s}_{2,1})}
 +\|G\|_{L^1_t(\dot{B}^{s-1}_{2,1})}+\|H\|_{L^1_t(\dot{B}^{s-1}_{2,1})} \right.\\
 &&\left. +C\int_0^t\|\nb v\|_{L^\infty}(\|a(t')\|_{\dot{B}^{s-1,s}_{2,1}}+\|u(t')\|_{\dot{B}^{s-1}_{2,1}}+\|\dv\tau(t')\|_{\dot{B}^{s-1}_{2,1}})dt'\right),
 \eeqno
Gronwall's inequality implies then that
\beq\label{eq74}
\nn&&\|a(t)\|_{\dot{B}^{s-1,s}_{2,1}}+\|u(t)\|_{\dot{B}^{s-1}_{2,1}}+\|\dv\tau(t)\|_{\dot{B}^{s-1}_{2,1}}\\
\nn&&+\|a\|_{L^1_t(\dot{B}^{s+1,s}_{2,1})}+\|u\|_{L^1_t(\dot{B}^{s+1,s-1}_{2,1})}+\|\dv\tau\|_{L^1_t(\dot{B}^{s+1,s-1}_{2,1})}\\
\nn&\le&C\exp\left(C\|\nb v\|_{L^1_t(L^\infty)}\right)\left(\|a_0\|_{\dot{B}^{s-1,s}_{2,1}}+\|u_0\|_{\dot{B}^{s-1}_{2,1}}+\|\dv\tau_0\|_{\dot{B}^{s-1}_{2,1}}\right.\\
&&\left.+\|F\|_{L^1(\dot{B}^{s-1,s}_{2,1})}+\|G\|_{L^1_t(\dot{B}^{s-1}_{2,1})}
+\|H\|_{L^1_t(\dot{B}^{s-1}_{2,1})}\right).
\eeq
\bigbreak
\noindent{\bf The smoothing effect of $u$}\par
\noindent Applying $\ddl$ to the equation of $\eqref{piOB}_2$ yields
\beqno
\pr_t u_q+v_q\cdot\nb u_q-\fr{1}{\Rey}\mathcal{A}u_q+\nb a_q-\fr{1}{\Rey}\dv\tau_q=G_q+(v_q\cdot\nb u_q-\ddl\dot{T}_v\cdot\nb u).
\eeqno
Taking the inner product of the above equation with $u_q$, integrating by parts, we have
\beqno
&&\fr12\fr{d}{dt}\|u_q\|_{L^2}^2+\fr{1-\om}{\Rey}\left(\|\nb u_q\|_{L^2}^2+\|\dv u_q\|_{L^2}^2\right)\\
&=&\fr12\int\dv v_q|u_q|^2-(\nb a_q|u_q)+\fr{1}{\Rey}(\dv\tau_q|u_q)+(G_q|u_q)+(v_q\cdot\nb u_q-\ddl\dot{T}_v\cdot\nb u|u_q).
\eeqno
By virtue of Bernstein's inequality and  the commutator estimates, we easily get
\beqno
&&\|u_q(t)\|_{L^2}+\kappa\fr{1-\om}{\Rey}2^{2q}\|u_q\|_{L^1_t(L^2)}\\
&\le&\|u_q(0)\|_{L^2}+C2^q\| a_q\|_{L^1_t(L^2)}+\fr{1}{\Rey}\|\dv\tau_q\|_{L^1_t(L^2)}+\|G_q\|_{L^1_t(L^2)}+C\int_0^t\|\nb v_q\|_{L^\infty}\sum_{|q'-q|\le 4}\|u_{q'}\|_{L^2}dt',
\eeqno
with positive constant $\kappa$ depending on $d$. Multiplying this inequality by $2^{q(s-1)}$ and sum over $q>q_0$, we arrive at
\beqno
&&\|u^h(t)\|_{\dot{B}^{s-1}_{2,1}}+\kappa\|u^h(t)\|_{L^1_t(\dot{B}^{s+1}_{2,1})}\\
&\le&\|u^h_0\|_{\dot{B}^{s-1}_{2,1}}+C\left(\|a^h\|_{L^1_t(\dot{B}^s_{2,1})}
+\|\dv\tau^h\|_{L^1_t(\dot{B}^{s-1}_{2,1})}+\|G\|_{L^1_t(\dot{B}^{s-1}_{2,1})}\right.\\
&&\left.+C\|\nb v\|_{L^1_t(L^\infty)}\|u^l\|_{L^\infty_t(\dot{B}^{s-1}_{2,1})}\right)+C\int_0^t\|\nb v\|_{L^\infty}\|u^h\|_{\dot{B}^{s-1}_{2,1}}dt'.
\eeqno
Consequently, using Gronwall's inequality, thanks to \eqref{eq74}, we are led to
\beq\label{eq75}
\nn&&\|a(t)\|_{\dot{B}^{s-1,s}_{2,1}}+\|u(t)\|_{\dot{B}^{s-1}_{2,1}}+\|\dv\tau(t)\|_{\dot{B}^{s-1}_{2,1}}\\
\nn&&+\|a\|_{L^1_t(\dot{B}^{s+1,s}_{2,1})}+\|u\|_{L^1_t(\dot{B}^{s+1}_{2,1})}+\|\dv\tau\|_{L^1_t(\dot{B}^{s+1,s-1}_{2,1})}\\
\nn&\le&C\exp\left(C\|\nb v\|_{L^1_t(L^\infty)}\right)\left(\|a_0\|_{\dot{B}^{s-1,s}_{2,1}}+\|u_0\|_{\dot{B}^{s-1}_{2,1}}+\|\dv\tau_0\|_{\dot{B}^{s-1}_{2,1}}\right.\\
&&\left.+\|F\|_{L^1(\dot{B}^{s-1,s}_{2,1})}+\|G\|_{L^1_t(\dot{B}^{s-1}_{2,1})}
+\|H\|_{L^1_t(\dot{B}^{s-1}_{2,1})}\right).
\eeq

\bigbreak
\noindent{\bf The damping effect of $\tau$}\par
\noindent Applying $\ddl$ to the equation of $\eqref{piOB}_3$ yields
\beqno
\pr_t \tau_q+v_q\cdot\nb \tau_q+\fr{\tau_q}{\We}-\fr{2\om}{\We}D(u_q)=L_q+(v_q\cdot\nb \tau_q-\ddl\dot{T}_v\cdot\nb \tau).
\eeqno
Taking the inner product of the above equation with $\tau_q$, integrating by parts, we have
\beqno
&&\fr12\fr{d}{dt}\|\tau_q\|_{L^2}^2+\fr{1}{\We}\|\tau_q\|_{L^2}^2\\
&=&\fr12\int\dv v_q|\tau_q|^2+\fr{2\om}{\We}(D(u_q)|\tau_q)+(L_q|u_q)+(v_q\cdot\nb \tau_q-\ddl\dot{T}_v\cdot\nb \tau|\tau_q).
\eeqno
Similar to the estimates of $u^h$, one easily deduces that
\beqno
\|\tau_q(t)\|_{L^2}+\fr{1}{\We}\|\tau_q\|_{L^1_t(L^2)}\le C\left(\|\tau_q(0)\|_{L^2}+2^q\|u_q\|_{L^2}+\|L_q\|_{L^2}+\int_0^t\|\nb v\|_{L^\infty}\sum_{|q-q'|\le4}\|\tau_{q'}\|_{L^2}dt'\right).
\eeqno
Multiplying the above equation by $2^{qs}$, and summing over $q\in \Z$, we find that
\beqno
\|\tau(t)\|_{\dot{B}^{s}_{2,1}}+\fr{1}{\We}\|\tau\|_{L^1_t(\dot{B}^{s}_{2,1})}\le C\left(\|\tau_0\|_{\dot{B}^s_{2,1}}+\|u\|_{L^1_t(\dot{B}^{s+1}_{2,1})}+\|L\|_{L^1_t(\dot{B}^s_{2,1})}+\int_0^t\|\nb v\|_{L^\infty}\|\tau\|_{\dot{B}^s_{2,1}}dt'\right).
\eeqno
Gronwall's inequality implies then that
\beq\label{eq76}
\|\tau(t)\|_{\dot{B}^{s}_{2,1}}+\fr{1}{\We}\|\tau\|_{L^1_t(\dot{B}^{s}_{2,1})}\le C\exp\left(C\|\nb v\|_{L^1_t(L^\infty)}\right)\left(\|\tau_0\|_{\dot{B}^s_{2,1}}+\|u\|_{L^1_t(\dot{B}^{s+1}_{2,1})}+\|L\|_{L^1_t(\dot{B}^s_{2,1})}\right).
\eeq
To close the estimate, substituting \eqref{eq75} into \eqref{eq76}, we conclude that
\beq\label{eq77}
\nn&&\|a(t)\|_{\dot{B}^{s-1,s}_{2,1}}+\|u(t)\|_{\dot{B}^{s-1}_{2,1}}+\|\tau(t)\|_{\dot{B}^{s}_{2,1}}\\
\nn&&+\|a\|_{L^1_t(\dot{B}^{s+1,s}_{2,1})}+\|u\|_{L^1_t(\dot{B}^{s+1}_{2,1})}+\|\tau\|_{L^1_t(\dot{B}^{s}_{2,1})}+\|\dv\tau\|_{L^1_t(\dot{B}^{s+1,s-1}_{2,1})}\\
\nn&\le&C\exp\left(C\|\nb v\|_{L^1_t(L^\infty)}\right)\left(\|a_0\|_{\dot{B}^{s-1,s}_{2,1}}+\|u_0\|_{\dot{B}^{s-1}_{2,1}}+\|\tau_0\|_{\dot{B}^{s}_{2,1}}\right.\\
&&\left.+\|F\|_{L^1(\dot{B}^{s-1,s}_{2,1})}+\|G\|_{L^1_t(\dot{B}^{s-1}_{2,1})}
+\|H\|_{L^1_t(\dot{B}^{s-1}_{2,1})}+\|L\|_{L^1_t(\dot{B}^{s}_{2,1})}\right).
\eeq
Recalling that $H^k:=(\dv L)^k-\sum_{1\le i, j\le d}\dot{T}_{\pr_iv^j}\pr_j\tau^{i,k}$, by virtue of Lemma \ref{Bernstein} and Proposition \ref{p-TR}, it is easy to see that
\be\label{eq77+}
\|H\|_{L^1_t(\dot{B}^{s-1}_{2,1})}\le C\|L\|_{L^1_t(\dot{B}^{s}_{2,1})}+C\int_0^t\|\nb v\|_{L^\infty}\|\tau\|_{\dot{B}^{s}_{2,1}}dt'.
\ee
Combining \eqref{eq77} with \eqref{eq77+}, then \eqref{eq-prop} follows immediately. This completes the proof of Proposition \ref{piOB}.
\end{proof}
\section{Proof of Theorem\ref{thm-g}}
\noindent
Since the local existence and uniqueness of the solution $(a, u, \tau)$ to \eqref{COB2dimensionless} have been proved in \cite{FZ14}, it suffices to show $T^*=\infty$, where $T^*$ is the maximal existence time of $(a, u, \tau)$. To this end, let us now denote $V(t):=\int_0^t\|\nb u(t')\|_{L^\infty}dt'$,
\beno
X(t):=\|a\|_{L^\infty_t(\dot{B}^{\fr{d}{2}-1,\fr{d}{2}}_{2,1})\cap L^1_t(\dot{B}^{\fr{d}{2}+1,\fr{d}{2}}_{2,1})}+\|u\|_{L^\infty_t(\dot{B}^{\fr{d}{2}-1}_{2,1})\cap L^1_t(\dot{B}^{\fr{d}{2}+1}_{2,1})}+\|\tau\|_{L^\infty_t(\dot{B}^{\fr{d}{2}}_{2,1})\cap L^1_t(\dot{B}^{\fr{d}{2}}_{2,1})},
\eeno
\beno
U(t):=\|a\|_{L^1_t(\dot{B}^{\fr{d}{2}+1,\fr{d}{2}}_{2,1})}+\|u\|_{L^1_t(\dot{B}^{\fr{d}{2}+1}_{2,1})}+\|\tau\|_{L^1_t(\dot{B}^{\fr{d}{2}}_{2,1})},
\eeno
and
\beno
X_0:=\|a_0\|_{\dot{B}^{\fr{d}{2}-1,\fr{d}{2}}_{2,1}}+\|u_0\|_{\dot{B}^{\fr{d}{2}-1}_{2,1}}+\|\tau_0\|_{\dot{B}^{\fr{d}{2}}_{2,1}}.
\eeno
Clearly, $U(t)$ is continuous with respect to time $t$.
Applying Proposition \ref{prop-global} with $s=\fr{d}{2}, v=u$ and
\begin{gather*}
F=-\dv(\dot{T}'_au),\\
G=-\fr{1}{\Rey}I(a)\left(\mathcal{A}u+\dv\tau\right)+\tl{K}(a)\nb a-\sum_{1\le j\le d}\dot{T}'_{\pr_j}u^j,\\
L=-g_\al(\tau, \nb u)-\sum_{1\le j\le d}\dot{T}'_{\pr_j\tau}u^j,
\end{gather*}
 and employing the product estimates in Besov spaces, it is not difficult to verity that
\beno
\|F\|_{L^1_t(\dot{B}^{\fr{d}{2}-1,\fr{d}{2}}_{2,1})}\le C\|a\|_{L^\infty_t(\dot{B}^{\fr{d}{2}-1,\fr{d}{2}}_{2,1})}\|u\|_{L^1_t(\dot{B}^{\fr{d}{2}+1}_{2,1})},
\eeno
\beqno
\|G\|_{L^1_t(\dot{B}^{\fr{d}{2}-1}_{2,1})}&\le&C\|a\|_{L^\infty_t(\dot{B}^{\fr{d}{2}}_{2,1})}\left(\|u\|_{L^1_t(\dot{B}^{\fr{d}{2}+1}_{2,1})}+\|\tau\|_{L^1_t(\dot{B}^{\fr{d}{2}}_{2,1})}\right)\\
&&+C\|a\|_{L^2_t(\dot{B}^{\fr{d}{2}}_{2,1})}^2+C\|u\|_{L^\infty_t(\dot{B}^{\fr{d}{2}-1}_{2,1})}\|u\|_{L^1_t(\dot{B}^{\fr{d}{2}+1}_{2,1})}\\
&\le&C\|a\|_{L^\infty_t(\dot{B}^{\fr{d}{2}-1,\fr{d}{2}}_{2,1})}\left(\|u\|_{L^1_t(\dot{B}^{\fr{d}{2}+1}_{2,1})}+\|\tau\|_{L^1_t(\dot{B}^{\fr{d}{2}}_{2,1})}\right)\\
&&+C\|a\|_{L^\infty_t(\dot{B}^{\fr{d}{2}-1,\fr{d}{2}}_{2,1})}\|a\|_{L^1_t(\dot{B}^{\fr{d}{2}+1,\fr{d}{2}}_{2,1})}
+C\|u\|_{L^\infty_t(\dot{B}^{\fr{d}{2}-1}_{2,1})}\|u\|_{L^1_t(\dot{B}^{\fr{d}{2}+1}_{2,1})},
\eeqno
and
\beqno
\|L\|_{L^1_t(\dot{B}^{\fr{d}{2}}_{2,1})}&\le& C\|\tau\|_{L^\infty_t(\dot{B}^{\fr{d}{2}}_{2,1})}\|\nb u\|_{L^1_t(\dot{B}^{\fr{d}{2}}_{2,1})}+\sum_{1\le j\le d}\|\dot{T}'_{\pr_j\tau}u^j\|_{L^1_t(\dot{B}^\fr{d}{2}_{2,1})}\\
&\le&C\|\tau\|_{L^\infty_t(\dot{B}^{\fr{d}{2}}_{2,1})}\|u\|_{L^1_t(\dot{B}^{\fr{d}{2}+1}_{2,1})}+C\|\nb\tau\|_{L^\infty_t(\dot{B}^{-1}_{\infty,\infty})}\|u\|_{L^1_t(\dot{B}^{\fr{d}{2}+1}_{2,1})}\\
&\le&C\|\tau\|_{L^\infty_t(\dot{B}^{\fr{d}{2}}_{2,1})}\|u\|_{L^1_t(\dot{B}^{\fr{d}{2}+1}_{2,1})}.
\eeqno
Collecting all these estimates, we infer from \eqref{eq-prop} that
\be\label{eq78}
X(t)\le Ce^{CV(t)}\left(X_0+X(t)U(t)\right).
\ee
Let us denote by $C_0$ the embedding constant of $\dot{B}^{\fr{d}{2}}_{2,1}\hookrightarrow L^\infty$. Then it is easy to see that $V(t)\le C_0U(t)$. Choosing a positive constant $c_\star$ satisfying
\be\label{eq79}
e^{CC_0c_\star}\le2,\quad \mathrm{and} \quad 2Cc_\star\le\fr12.
\ee
Define $T_1$ be the supremum of all time $T'\in[0,T^*)$ such that
\be\label{eq80}
U(t)\le c_\star, \quad\mathrm{for\ all}\quad t\in[0,T'].
\ee
Combining \eqref{eq79} with \eqref{eq80}, \eqref{eq78} reduces to
\be\label{eq81}
X(t)\le4CX_0, \quad \mathrm{for\ all}\quad t\in [0,T_1).
\ee
Noting that $U(t)\le X(t)$, for all $t\in[0,T^*)$. Taking $X_0$ so small that $4CX_0\le\fr{c_\star}{2}$, we then have
\be\label{eq82}
U(t)\le\fr{c_\star}{2}, \quad \mathrm{for\ all}\quad t\in[0,T_1).
\ee
This implies that $T_1=T^*$, and \eqref{eq80} holds on the interval $[0,T^*)$ provided $X_0\le\fr{c_\star}{8C}$. Accordingly, \eqref{eq81} holds with $T_1$ replaced by $T^*$, and hence $T^*=\infty$. This completes the proof of Theorem\ref{thm-g}.$\ \ \ \ \Box$

\bigbreak

\noindent{\bf Acknowledgments}
\bigbreak
Research supported by NSFC 11401237, 11271322, and 11331005. Part of this work was carried out while the  author is visiting the Department of Mathematics at the Technical University of Darmstadt.
I would express my gratitude to Prof. Matthias Hieber for his kind hospitality.
The author also would like to thank  Prof. Ting Zhang for his insightful suggestions.


\begin{thebibliography}{aaa}
\bibitem{Bahouri-Chemin-Danchin11}
H. Bahouri,  J.-Y. Chemin and  R. Danchin, ``Fourier analysis and
nonlinear partial differential equations''. Grundlehren der
Mathematischen Wissenschaften [Fundamental Principles of
Mathematical Sciences], 343. Springer, Heidelberg, 2011.

\bibitem{Bony81}
J.-M. Bony, {Calcul symbolique et propagation des
singularit\'{e}s pour \'{e}quations aux
d\'{e}riv\'{e}es partielles nonlin\'{e}aires}.
  \textit{Annales Scinentifiques de l'\'{e}cole Normale
Sup\'{e}rieure}  \textbf{14}(1981), 209--246.

\bibitem{C97}
M. Cannone, A generalization of a theorem by Kato on Navier-Stokes equations. {\em Rev. Mat. Iberoamericana}, {\bf13}(1997), 515--541.

\bibitem{CMP93} M. Cannone, Y. Meyer, and F. Planchon, Solutions autosimilaires des \'{e}quations de Navier--
Stokes, in S \'{e}minaire ``\'{E}quations aux D\'{e}riv\'{e}es Partielles'' de l'\'{E}cole Polytechnique, Expos\'{e}
VIII, 1993--1994.



\bibitem{Ch1}
J.-Y.  Chemin, ``Localization in Fourier space and Navier-Stokes
system, Phase Space Analysis of Partial Differential Equations''.
{Proceedings 2004, CRM series, Pisa} 2004, 53--136.

\bibitem{CL}
J.-Y.  Chemin, N. Lerner,
 {Flot de champs de vecteurs non Lipschitziens et \'{e}quations
de Navier-Stokes}. {\em J. Differential Equations} \textbf{121}(1995),
314--228.

\bibitem{CM01}
J. Y. Chemin, N. Masmoudi, About lifespan of regular solutions of
equations related to viscoelastic fluids. {\em SIAM J. Math. Anal},
{\bf33}(2001), 84--112.

\bibitem{Chen-Miao08}
Q. L. Chen, C. X. Miao, Global well-posedness of viscoelastic fluids of Oldroyd type in Besov spaces,
{\em Nonlinear Analysis}, {\bf68}(2008), 1928--1939.



\bibitem{CK12}
P. Constantin, M. Kliegl, Note on global regularity for two-dimensional Oldroyd-B fluids with diffusive stress. {\em Arch. Rational Mech. Anal.}, {\bf206}(2012), 725--740.

\bibitem{Danchin00}
R. Danchin,  Global existence in critical spaces for compressible Navier-Stokes equations.
{\em Invent. Math.}, {\bf141}(2000),  579--614.




\bibitem{DLZ12}
Y. Du, C. Liu and Q. T. Zhang, A blow-up critirion for 3D
compressible viscoelasticity, arXiv:1202.3693v1 [math.AP] 16 Feb
2012.




\bibitem{FHZ13}
D.Y. Fang, M. Hieber and R. Z. Zi, Global existence results for Oldroyd-B Fluids in exterior domains: The case of non-small coupling parameters, {\em  Math. Ann.}, {\bf357}(2013), 687--709.

\bibitem{Fang-Zi13}
D. Y. Fang,  R. Z. Zi, Strong solutions of 3D compressible Oldroyd-B fluids, {\em Math. Meth. Appl. Sci.} {\bf36}(2013),  1423--1439.

\bibitem{FZ14}
D. Y. Fang,  R. Z. Zi,  Incompressible limit of Oldroyd-B fluids in the whole space, {\em  J. Differential Equations}, { \bf256} (2014), 2559--2602.

\bibitem{FGG98}
E. Fern\'{a}ndez-Cara, F. Guill\'{e}n, R. Ortega, Some theoretical
results concerning non-Newtonian fluids of the Oldroyd kind. {\em
Ann. Scuola Norm. Sup. Pisa}, {\bf26}(1998), 1--29.

\bibitem{FK64}
H. Fujita and T. Kato, On the Navier--Stokes initial value problem I, {\em Arch. Ration. Mech.
Anal.}, {\bf16}(1964), 269--315.

\bibitem{GS90}
C. Guillop\'{e}, J. C. Saut, Existence results for the flow of
viscoelastic fluids with a differential constitutive law. {\em
Nonlinear Anal.} {\bf15}(1990), 849--869.

\bibitem{GST10}
C. Guillop\'e, Z. Salloum  and R. Talhouk, Regular flows of weakly
compressible viscoelastic fluids and the incompressible limit. {\em
Discrete Contin. Dyn. Syst. Ser. B} \textbf{14}(2010), 1001--1028.


\bibitem{HNS12}
M. Hieber, Y. Naito and Y. Shibata, Global existence results for
Oldroyd-B fluids in exterior domains.
{ \em  J. Differential Equations}, {\bf252} (2012) 2617--2629.



\bibitem{HL14}
X. P. Hu, F. H. Lin, Global solutions of two dimensional incompressible viscoelastic flows with discontinuous initial data, arXiv:1312.6749 [math.AP].

\bibitem{HW11}
X. P. Hu, D. H. Wang, Formation of sigularity for compressible
viscoelasticity, arXiv:1109.1332v1 [math.AP] 7 Sep 2011.

\bibitem{KT01}
H. Koch, D. Tataru, Well-posedness for the Navier--Stokes equations, {\em Adv. Math.}, {\bf157}(2001), 22--35.

\bibitem{KMT08}
R. Kupferman, C. Mangoubi, and E. S. Titi,  A Beale-Kato-Majda breakdown criterion for an Oldroyd-B fluid in the creeping flow regime. {\em Commun. Math. Sci.}, {\bf6}(2008), 235--256.

\bibitem{Lei06}
 Z. Lei,  Global existence of classical solutions for some Oldroyd-B model via the incompressible limit. {\em Chinese Ann. Math. Ser. B}, {\bf27} (2006),  565--580.

\bibitem{Lei10}
Z. Lei, On 2D viscoelasticity with small strain, {\em Arch. Ration.
Mech. Anal.}, \textbf{198}(2010), 13--37.

\bibitem{Lei07}
Z. Lei,  C. Liu and Y. Zhou, Global  existence for a 2D incompressible viscoelastic model with small strain.{\em Commun. Math. Sci.,} {\bf5}(2007), 595--616.


\bibitem{LLZ08}
Z. Lei, C. Liu and Y. Zhou, Global solutions for incompressible
viscoelastic fluids, {\em Arch. Ration. Mech. Anal.}, \textbf{188}(2008),
371--398.


\bibitem{LMZ10}
Z. Lei, N. Masmoudi, Y. Zhou, Remarks on the blowup criteria for
Oldroyd models, {\em J.
Differential Equations}, {\bf248}(2010), 328--341.

\bibitem{LZ05}
Z. Lei, Y. Zhou, Global existence of classical solutions for the
two-dimensional Oldroyd model via the incompressible limit, {\em
SIAM J. Math. Anal.}, {\bf37}(2005), 797--814.

\bibitem{LLZ05}
F. H. Lin, C. Liu and  P. Zhang,  On hydrodynamics of viscoelastic
fluids, {\em Comm. Pure Appl. Math.}, \textbf{58}(2005),  1437--1471.

\bibitem{LZ08}
F. H. Lin, P. Zhang, On the initial-boundary value problem of the
incompressible viscoelastic fluid system, {\em Comm. Pure Appl.
Math.}, \textbf{61}(2008), 539--558.

\bibitem{LM00}
P. L. Lions, N. Masmoudi, Global solutions for some Oldroyd models
of non-Newtonian flows. {\em Chinese Ann. Math. Ser. B}, {\bf21}(2000),
131--146.

\bibitem{MT04}
L. Molinet, R. Talhouk,
On the global and periodic regular flows of viscoelastic fluids with a differential
constitutive law. {\em Nonlinear Diff. Equations Appl.} {\bf11}(2004), 349--359.

\bibitem{Oldroyd58}
J. G. Oldroyd,
Non-Newtonian effects in steady motion of some idealized elastico-viscous liquids.
{\em Proc. Roy. Soc. London.}, {\bf245}(1958), 278--297.

\bibitem{Qian10}
J. Z. Qian, Well-posedness in critical spaces for incompressible viscoelastic fluid system, {\em Nonlinear Analysis}, {\bf72}(2010), 3222--3234.

\bibitem{Qian-Zhang10}
J. Z. Qian, Z. F. Zhang, Global well-posedness for compressible viscoelastic fluids near equilibrum, {\em Arch. Rational Mech. Anal.}, {\bf 198}(2010), 835--868.

\bibitem{Talhouk94}
R. Talhouk , ``Analyse Math\'ematique de Quelques \'Ecoulements de
Fluides Visco\'elastiques,'' Th\`{e}se, Universit\'e Paris-Sud,
1994.

\bibitem{W80}
F. Weissler, The Navier-Stokes initial value problem in $L^p$. {\em Arch. Rational Mech. Anal.}, {\bf 74}(1980), 219--230.

\bibitem{Zhang-Fang12}
T. Zhang, D. Y. Fang, Global existence of strong solution for equations related to the incompressible viscoelastic fluids in the critical $L^p$ framework, {\em SIAM J. Math. Anal.}, {\bf44}(2012), 2266--2288.



\bibitem{Zi-Fang-Zhang14}
R. Z. Zi, D. Y. Fang and T. Zhang, Global solution to the incompressible Oldroyd-B model in the critical $L^p$ framework: the case of the non-small coupling parameter, {\em Arch. Rational Mech. Anal.}, {\bf213}(2014), 651--687.


\end{thebibliography}
\end{document}